\documentclass[a4paper,12pt]{amsart}
\usepackage{comment}

\usepackage{lipsum}
\usepackage{url}

\usepackage[english]{babel}
\usepackage[T1]{fontenc}

\usepackage[utf8]{inputenc}
\usepackage{amsmath,amssymb,mathrsfs}
\usepackage{amscd}
\usepackage{pgf,tikz}
\usetikzlibrary{arrows}
\usetikzlibrary{patterns}
\numberwithin{equation}{section}
\newtheorem{theo}{Theorem}[section]
\newtheorem{cor}{Corollary}[section]
\newtheorem{prop}{Proposition}[section]
\newtheorem{lem}{Lemma}[section]
\newtheorem{fe}{Definition}[section]
\newtheorem{rem}{Remark}[section]
\numberwithin{figure}{section}

\setlength{\parskip}{4pt}
\usepackage{a4wide}

\newcommand{\chh}{{H_V}}

\newcommand{\chho}{{H_0}}
\newcommand{\bn}{{\mathbb{N}}}
\newcommand{\br}{{\mathbb{R}}}
\newcommand{\bc}{{\mathbb{C}}}
\newcommand{\spp}{{\mathcal{S}_p}}
\newcommand{\sqq}{{\mathcal{S}_q}}
\newcommand{\s}{{\mathcal{S}_1}}
\newcommand{\sd}{{\mathcal{S}_2}}
\newcommand{\sinf}{{\mathcal{S}_\infty}}
\newcommand{\one}{{\bf 1}}


\newcommand{\xp}{x_\perp}
\newcommand{\mper}{m_\perp}
\newcommand{\im}{\textup{Im}}
\newcommand{\re}{\textup{Re}}



\title[Spectral Shift Function and Resonances]
{Spectral Shift Function and Resonances near the low 
ground state for Pauli and Schrödinger operators}
\author{Diomba \textsc{Sambou}}
\address{Departamento de Matem\'aticas, Facultad de Matem\'aticas, Pontificia Universidad
Cat\'olica de Chile, Vicu\~na Mackenna 4860, Santiago de Chile}
\email{disambou@mat.uc.cl}



\begin{document}


\begin{abstract}
We study the spectral shift function (SSF) $\xi(\lambda)$ and 
the resonances of the operator $H_V := \big( \sigma \cdot 
(-i\nabla - \textbf{A}) \big)^{2} + V$ in $L^2(\mathbb{R}^3)$ near 
the origin. Here $\sigma := (\sigma_1,\sigma_2,\sigma_3)$ are 
the $2 \times 2$ Pauli matrices and $V$ is a hermitian potential 
decaying exponentially in the direction of the magnetic field 
$\textbf{B} := \text{curl} \hspace{0.6mm} \textbf{A}$. We give 
a representation of the derivative of the SSF as a sum of the 
imaginary part of a holomorphic function and a harmonic measure 
related to the resonances of $H_V$. This representation warrant
the Breit-Wigner approximation moreover we deduce information
about the singularities of the SSF at the origin and a local 
trace formula.
\end{abstract}

\maketitle

\textbf{AMS 2010 Mathematics Subject Classification:} 35P25, 35J10, 47F05, 81Q10.

\textbf{Keywords:} Spectral shift function, Pauli operator, Schrödinger 
operator, Resonances, Breit-Wigner approximation, Trace formula.

\tableofcontents

\section{Introduction and motivations}\label{s1}

\subsection{Unperturbed operator}

Consider the three-dimensional Pauli 
operator acting in $L^{2}(\br^3) := L^{2}(\br^3,
\mathbb{C}^{2})$ and describing a quantum 
non-relativistic spin-$\frac{1}{2}$ particle 
subject to a magnetic field $\textbf{B} : 
\br^3 \longrightarrow \br^3$ pointing at
the $x_3$ direction: 
\begin{equation}\label{eq1,1}
\textbf{B}(\textbf{x}) = \big( 0,0,b(\textbf{x}) 
\big), \quad \textbf{x} := (x_{\perp},x_3) 
\in \br^{3}, \quad x_{\perp} := (x_1,x_2) 
\in \br^{2}.
\end{equation}
Then $\xp = (x_1,x_2) \in \mathbb{R}^{2}$ 
are the variables on the plane perpendicular 
to the magnetic field. Let $\textbf{A} = 
(a_{1},a_{2},a_{3}) : \mathbb{R}^{3} 
\rightarrow \mathbb{R}^{3}$ denote the 
magnetic potential generating the magnetic 
field, namely $\textbf{B}(\textbf{x}) := 
\text{curl} \hspace{0.6mm} \textbf{A}
(\textbf{x})$. Since $\text{div} 
\hspace{0.6mm} \textbf{B} = 0$ then $b$ is 
independent of $x_3$. Hence there is no 
loss of generality in assuming that 
$a_{j}$, $j = 1$, $2$ are independent 
of $x_3$ and $a_{3} = 0$: 
\begin{equation}\label{eq1,01}
\textbf{A}(\textbf{x}) 
= \big( a_1(\xp),a_2(\xp),0 \big),
\quad b(\textbf{x}) = b(\xp) = 
\partial_1 a_2(\xp) - \partial_2 a_1(\xp).
\end{equation}
Let $\sigma_{j}$, $j \in \lbrace 1, 2, 3 
\rbrace$ be the $2 \times 2$ Pauli 
matrices given by
\begin{equation}
\sigma_{1} := \begin{pmatrix}
   0 & 1 \\
   1 & 0
\end{pmatrix}, \hspace{0.5cm} 
\sigma_{2} := \begin{pmatrix}
   0 & -i \\
   i & 0
\end{pmatrix}, \hspace{0.5cm} 
\sigma_{3} := \begin{pmatrix}
   1 & 0 \\
   0 & -1
\end{pmatrix}.
\end{equation}
The free self-adjoint Pauli operator is initially defined 
on $C_{0}^{\infty}(\br^3,\mathbb{C}^{2})$ 
(then closed in $L^{2}(\br^3)$) by
\begin{equation}\label{eq1,2}
H_0 := \big( \sigma \cdot (-i\nabla - \textbf{A}) 
\big)^{2}, \qquad \sigma := (\sigma_1,\sigma_2,\sigma_3).
\end{equation}
A trivial computation shows that
\begin{equation}\label{eq1,3}
H_0 = \begin{pmatrix}
   (-i\nabla - \textbf{A})^{2} - b & 0 \\
   0 & (-i\nabla - \textbf{A})^{2} + b
\end{pmatrix}.
\end{equation}

We will assume (abusing the terminology) 
that $b : \br^{2} \rightarrow \br$ is an admissible 
magnetic field. This means that there exists a positive 
constant $b_{0}$ satisfying $b(\xp) = b_{0} +
\tilde{b}(\xp)$, $\tilde{b}$ being a function such 
that the Poisson equation 
\begin{equation}\label{eq1,4}
\Delta \tilde{\varphi} = \tilde{b}
\end{equation}
admits a solution $\tilde{\varphi} \in C^{2}(\br^{2})$ 
verifying $\sup_{\xp \in \br^{2}} \vert D^{\alpha} 
\tilde{\varphi}(\xp) \vert < \infty$, 
$\alpha \in \mathbb{Z}_{+}^{2}$, $\vert \alpha \vert 
\leq 2$, (we refer to \cite[Section 2.1]{rage} for more 
details and examples on admissible magnetic fields). 
Introduce $\varphi_{0}(\xp) = b_{0} \vert \xp \vert^{2}/4$ 
and $\varphi := \varphi_{0} + \tilde{\varphi}$ so that we 
have $\Delta \varphi = b$. Define originally on 
$C_{0}^{\infty}(\br^{2},\bc)$ the operators
\begin{equation}\label{eq1,5}
a = a(b) := -2i \textup{e}^{-\varphi} 
\frac{\partial}{\partial \bar{z}} \textup{e}^{\varphi} 
\quad \text{and} \quad a^{\ast} = a^{\ast}(b) := -2i 
\textup{e}^{\varphi} \frac{\partial}{\partial z} 
\textup{e}^{-\varphi}
\end{equation}
with $z := x_{1} + i x_{2}$, $\bar{z} := x_{1} - 
i x_{2}$ and introduce the operators
\begin{equation}\label{eq1,6}
H_{1}(b) = a^{\ast} a \quad \text{and} \quad \quad 
H_{2}(b) = a a^{\ast}.
\end{equation}
The spectral properties of $H_{j} = H_{j}(b)$, $j = 1$, 
$2$ are well know from \cite[Proposition 1.1]{rage}:
\begin{equation}\label{eq1,7}
\begin{cases}
\sigma (H_{1}) \subseteq \lbrace 0 \rbrace \cup 
[\zeta,+\infty) \hspace{0.1cm} \text{whith 
$0$ an eigenvalue of infinite multiplicity}, \\
\sigma (H_{2}) \subseteq  [\zeta,+\infty), \quad \dim 
\hspace{0.5mm} \textup{Ker} \hspace{0.5mm} H_{2} = 0,
\end{cases}
\end{equation}
where 
\begin{equation}\label{eq1,8}
\zeta := 2 b_{0} e^{-2 \hspace{0.5mm} \textup{osc} 
\hspace{0.5mm} \tilde{\varphi}}, \quad
\textup{osc} \hspace{0.5mm} \tilde{\varphi}:= 
\sup_{\xp \in \mathbb{R}^{2}} \tilde{\varphi} (\xp) 
- \inf_{\xp \in \br^{2}} \tilde{\varphi} (\xp).
\end{equation}
The orthogonal projection onto 
$\text{Ker} \hspace{0.5mm} H_{1}(b)$ will be denoted by 
$p = p(b)$. From \cite[Theorem 2.3]{hal} we know that 
it admits a continuous integral kernel 
$\mathcal{P}_{b}(\xp,\xp')$, $\xp$, $\xp' \in \br^{2}$.
Furthermore by \cite[Lemma 2.3]{rage}
\begin{equation}\label{eq1,81}
\frac{b_0}{2\pi} e^{-2\textup{osc} \hspace{0.5mm} 
\tilde{\varphi}} \leq \mathcal{P}_{b}(\xp,\xp) \leq 
\frac{b_0}{2\pi} e^{2\textup{osc} \hspace{0.5mm} 
\tilde{\varphi}}, \qquad \xp \in \br^2.
\end{equation}

Under the above considerations by taking
$a_1 = -\partial_2 \varphi$ and $a_2 = \partial_1 \varphi$ 
the operator $H_0$ can be written in $L^{2}(\br^{3}) 
= L^{2}(\br^{2}) \otimes L^{2}(\br)$ as
\begin{equation}\label{eq1,9}
\small{H_0 = \begin{pmatrix}
   H_{1}(b) \otimes 1 + 1 \otimes 
   \left( -\frac{d^{2}}{dx_3^{2}} \right) & 0 \\
   0 & H_{2}(b) \otimes 1 + 1 \otimes 
   \left( -\frac{d^{2}}{dx_3^{2}} \right)
\end{pmatrix} =: \begin{pmatrix}
   \mathcal{H}_{1}(b) & 0 \\
   0 & \mathcal{H}_{2}(b)
\end{pmatrix}}.
\end{equation}
The spectrum of  $-\frac{d^{2}}{dx_3^{2}}$ originally defined on 
$C_{0}^{\infty}(\br,\bc)$ coincides with $[0,+\infty)$ and is 
absolutely continuous. Then \eqref{eq1,7} and \eqref{eq1,9} 
imply that
\begin{equation}\label{eq1,10}
\sigma (H_0) = \sigma_{\textup{\textbf{ac}}} (H_0) = 
[0,+\infty),
\end{equation}
\big(see also \cite[Corollary 2.2]{rage}\big).

\subsection{Perturbed operator and the spectral shift function}

On the domain of $\chho$ we introduce the
perturbed operator 
\begin{equation}\label{eq1,11}
\chh := \chho + V,
\end{equation}
where $V$ is identified with the multiplication operator
by the matrix-valued function 
\begin{equation}\label{eq1,12}
V(\textbf{x}) := \begin{pmatrix}
   v_{11}(\textbf{x}) & v_{12}(\textbf{x}) \\
   v_{21}(\textbf{x}) & v_{22}(\textbf{x})
\end{pmatrix} \in \mathfrak{B}_h(\bc^2), \quad \textbf{x} 
\in \br^3,
\end{equation}
$\mathfrak{B}_h(\bc^2)$ being the set of $2 \times 2$ 
hermitian matrices. Throughout this paper we require an
exponential decay along the direction of the magnetic 
field for the electric potential $V$ in the following 
sense:
\begin{equation}\label{eq1,13} 
\begin{cases}
0 \not\equiv V \in C^0 (\br^3), \quad 
\vert v_{\ell k}(\textbf{x}) \vert \leq \text{Const.} 
\hspace{0.5mm} \langle \xp \rangle^{-\mper} 
\hspace{0.5mm} e^{-\gamma \langle
 x_3 \rangle}, \quad 1 \leq \ell,k \leq 2 \\
\textup{with $\mper > 2$, $\gamma > 0$ constant 
and $\langle y \rangle := \sqrt{1 + \vert y \vert^2}$
for $y \in \br^d$.}
\end{cases}
\end{equation}

Introduce some notations. Let $\mathscr{H}$ be 
a separable Hilbert space and $\sinf(\mathscr{H})$ be 
the set of compact linear operators on $\mathscr{H}$. 
Denote by $s_k(T)$ the $k$-th singular value of 
$T \in \sinf(\mathscr{H})$. The Schatten-von Neumann 
class ideals $\sqq(\mathscr{H})$, $q \in [1,+\infty)$ 
are defined by 
\begin{equation}\label{eq1,14}
\sqq(\mathscr{H}) := \Big\lbrace T \in \sinf(\mathscr{H}) 
: \Vert T \Vert^q_\sqq := \sum_k s_k(T)^q < +\infty 
\Big\rbrace.
\end{equation}
For $\lceil q \rceil := \min \big\lbrace n \in \mathbb{N} 
: n \geq q \big\rbrace$ and $T \in \sqq(\mathscr{H})$ the 
regularized determinant $\textup{det}_{\lceil q \rceil} 
(I - T)$ is defined by
\begin{equation}\label{eq1,15}
\small{\textup{det}_{\lceil q \rceil} (I - T)
:= \prod_{\mu \hspace*{0.1cm} \in \hspace*{0.1cm} \sigma (T)} 
\left[ (1 - \mu) \exp \left( \sum_{k=1}^{\lceil q \rceil-1} 
\frac{\mu^{k}}{k} \right) \right]}.
\end{equation}
The case $q = 1$ corresponds to the trace class operators
while the case $q = 2$ coincides with the Hilbert-Schmidt 
operators.

Now let $\mathcal{H}_0$ and $\mathcal{H}$ be two self-adjoint
operators in $\mathscr{H}$ such that
\begin{equation}\label{eq1,16}
V := \mathcal{H} - \mathcal{H}_0 \in \s(\mathscr{H}).
\end{equation}
There exists an important object in the theory of 
scattering associated to the pair of operators 
$(\mathcal{H},\mathcal{H}_0)$ called the \textit{spectral 
shift function} (SSF) $\xi(\lambda)$. The concept of 
SSF was first formally introduced by Lifshits \cite{lif}.
The mathematical theory of the SSF was developed by Krein 
\cite{kre1}. For trace class perturbations \eqref{eq1,16} 
the SSF is related to the determinant perturbation by the
Krein's formula \big(see for instance \cite{kre1}, \cite{kre2}\big)
\begin{equation}\label{eq1,17}
\xi(\lambda) = \frac{1}{\pi} 
\lim_{\varepsilon \longrightarrow 0^+} 
\text{Arg} \det 
\big( I + V(\mathcal{H}_0 - \lambda - i\varepsilon)^{-1} 
\big), \quad \text{a.e.} \hspace*{0.15cm} \lambda \in \br,
\end{equation}
the branch of the argument being fixed by the condition
$$
\text{Arg} \det 
\big( I + V(\mathcal{H}_0 - z)^{-1} \big)
\longrightarrow 0, \quad \im(z) \longrightarrow + \infty.
$$
Actually on the basis of the invariance principle 
\big(see for instance \cite{BiYa}\big) the SSF is well 
defined once there exists $\ell > 0$ such that
\begin{equation}\label{eq1,18}
(\mathcal{H} - i)^{-\ell} - (\mathcal{H}_0 - i)^{-\ell} 
\in \s(\mathscr{H}).
\end{equation}
It's the function whose derivative is given 
by the following distribution: 
\begin{equation}\label{eq1,19}
\xi' : f \longmapsto - \text{Tr} \big( f(\mathcal{H})
- f(\mathcal{H}_0) \big),  \quad 
f \in C_0^\infty(\br).
\end{equation}
Following the Birman-Krein theory \big(see \cite{BiKr}\big)
the SSF coincides with the scattering phase 
$s(\lambda) = -\frac{1}{2\pi} \text{Arg} \det 
S(\lambda)$ where $S(\lambda)$ is the scattering matrix.
More precisely by the Birman-Krein formula 
\big(see \cite{BiKr}\big) the SSF is related to $S(\lambda)$ 
by $\det S(\lambda) = e^{-2i\pi S(\lambda)}$ for almost 
every $\lambda \in \sigma_{ac}(\chho)$. The above
interpretation of the SSF as the scattering phase 
stimulates its investigation in quantum-mechanical 
problems. We refer to the review \cite{BiYa} and the 
book \cite{Yaf} for a large detailed bibliography about 
the SSF.

In our case assumption \eqref{eq1,13} on
$V$ implies that there exists $\mathscr{V} \in 
\mathscr{L}(\mathscr{H})$ such that
\begin{equation}\label{eq1,191}
\vert V \vert^\frac{1}{2} (\textbf{x})
= \mathscr{V} \left( \langle \xp \rangle
^{-\frac{m_\perp}{2}} \otimes
e^{-\frac{\gamma}{2} 
\langle t \rangle} \right), \quad 
\textbf{x} = (\xp,t) \in \br^3, \quad m_\perp > 2.
\end{equation} 
The standard criterion \cite[Theorem 4.1]{sim}
implies that
\begin{equation}\label{eq1,20}
\langle \xp \rangle
^{-\frac{m_\perp}{2}} \otimes
e^{-\frac{\gamma}{2} 
\langle t \rangle} (-\Delta + 1)^{-1} \in 
\sd \big( L^2(\br^3,\bc) \big).
\end{equation}
Then this together with the diamagnetic inequality 
\big(see \cite[Theorem 2.3]{avr}-\cite[Theorem 2.13]
{sim}\big) and the boundedness of the magnetic field $b$
imply that
\begin{equation}\label{eq1,21}
\vert V \vert^\frac{1}{2} (\chho - i)^{-1} \in 
\sd \big( L^2(\br^3) \big).
\end{equation}
Therefore exploiting the resolvent identity we obtain
\begin{equation}\label{eq1,22}
(\chh - i)^{-1} - (\chho - i)^{-1} 
\in \s \big( L^2(\br^3) \big).
\end{equation}
Namely \eqref{eq1,18} holds with $\ell = 1$ with respect 
to the operators $\chh$, $\chho$ and the Hilbert space 
$\mathscr{H} = L^2(\br^3)$. So the distribution 
\begin{equation}\label{eq1,23}
\xi' : f \longmapsto - \text{Tr} \big( f(\chh)
- f(\chho) \big),  \quad 
f \in C_0^\infty(\br)
\end{equation}
is well defined. For our purpose it is more convenient
to introduce the regularized spectral shift function
\big(see for instance \cite{kop} or \cite{bou}\big)
\begin{equation}\label{eq1,24}
\xi_2(\lambda) = \frac{1}{\pi} 
\lim_{\varepsilon \longrightarrow 0^+}
\text{Arg} \hspace*{0.1cm} {\det}_2 
\big( I + V(\chho - \lambda - i\varepsilon)^{-1} \big)
\end{equation}
whose derivative is given by the distribution
\begin{equation}\label{eq1,25}
\xi_2' : f \longmapsto - \text{Tr} \left( f(\chh)
- f(\chho) - \frac{d}{d\varepsilon} 
f(\chho + \varepsilon V)_{\vert \varepsilon = 0} \right), 
\quad f \in C_0^\infty(\br).
\end{equation}
From the relation between $\xi'$ and $\xi_2'$ given
by Lemma \ref{l5,1}, we will deduce the properties of 
the SSF. In the present paper the main result concerns a 
representation of the derivative of the SSF near the 
low ground state of the operator $\chho$ corresponding 
to the origin as a sum of a harmonic measure 
(related to the resonances of the operator $\chh$ near 
zero) and the imaginary part of a holomorphic function. 
Such representation justifies the Breit-Wigner 
approximation (see Theorem \ref{t2,1}) and implies a 
trace formula (see Theorem \ref{t2,2}) as in 
\cite{pet}, \cite{bru}, \cite{DiZe}, \cite{bon}. 
We derive also from our main 
result an asymptotic expansion of the SSF near the origin 
(see Theorem \ref{t2,3}). Similar results are obtained
in \cite{bon} for the SSF near the Landau levels as
well in \cite{kho}. On the other hand the singularities 
of the SSF associated to the pair $(\chh,\chho)$ is 
also studied in \cite{rage} with polynomial decay on 
the electric potential $V$. In Remark \ref{r2,2}, we compare 
our results to those of \cite{rage}. The case of the Dirac 
Hamiltonian with admissible magnetic fields is considered 
in \cite{tda} where the singularities of the SSF near 
$\pm m$ are investigated. Results obtained there are 
closely related to those from \cite{rage}.

The paper is organized as follows. In 
Section \ref{s2} we formulate our main
results. Sections \ref{s3}-\ref{s4} are
devoted to the study of the resonances
of $\chh$ near the origin. In the first 
one we define the resonances and in the 
second one we establish upper bounds on 
their number near the origin. Sections 
\ref{s5}-\ref{s7} are respectively 
devoted to the proofs of the main results.
Section \ref{sa} is a brief appendix on
finite meromorphic operator-valued 
functions.

\section{Statement of the main results}\label{s2}

First introduce some notations and terminology.

Denote by $\vert V \vert$ the multiplication operator 
by the matrix-valued function
\begin{equation}\label{eq2,1}
\sqrt{V^\ast V}(\textbf{x}) = \sqrt{V^2}(\textbf{x})
=: \big\lbrace \vert V 
\vert_{\ell k}(\textbf{x}) \big\rbrace, \quad 1 
\leq \ell,k \leq 2
\end{equation}
and by $J := sign(V)$ the matrix sign of $V$
which satisfies $V = J \vert V \vert$. We will say
that $V$ is of definite sign if the multiplication
operator $V(\textbf{x})$ by the matrix-valued function
$V(\textbf{x})$ satisfies 
\begin{equation}
\pm V(\textbf{x}) \geq 0
\end{equation} 
for any $\textbf{x} \in \br^3$. It is easy to 
check that in this case we have respectively $V 
= J \vert V \vert = \pm \vert V \vert$. Then 
without loss of generality we will say that $V$ is 
of definite sign $J = \pm$. 

Let $\textbf{W}$ be the 
multiplication operator by the function 
$\textbf{W} : \br^2 \longrightarrow 
\br$ defined by 
\begin{equation}\label{eq2,2}
\displaystyle \textbf{W}(\xp) :=
\int_\br \vert V \vert_{11} (\xp,x_3)dx_3.
\end{equation}
Hypothesis \eqref{eq1,13} on $V$ implies
that
\begin{equation}\label{eq2,3}
0 \leq \textbf{W} (\xp) \leq \text{Const.}' 
\hspace{0.5mm} \langle \xp \rangle^{-\mper}, 
\quad \mper > 2, \quad \xp \in \br^2,
\end{equation} 
where
$\text{Const.}' = \text{Const.} \hspace{0.5mm}
\int_\br e^{-\gamma \langle x_3 \rangle} dx_3$. 
Then by \cite[Lemma 2.3]{rage} the positive 
self-adjoint Toeplitz operator $p \textbf{W} p$ 
is of trace class, $p = p(b)$ being the 
orthogonal projection onto $\text{Ker} 
\hspace{0.5mm} H_{1}(b)$ defined by \eqref{eq1,6}. 

Introduce $e_\pm$ the multiplication operators by 
the functions $e^{\pm\frac{\gamma}{2} \langle \cdot
\rangle}$ respectively and let $c : L^{2}(\br) 
\longrightarrow \bc$ be the operator given by 
\begin{equation}\label{eqc1}
c(u) := \langle u,
e^{-\frac{\gamma}{2} \langle \cdot \rangle} 
\rangle
\end{equation}
while $c^{\ast} : \bc \longrightarrow 
L^{2}(\br)$ satisfies $c^{\ast}(\lambda) = 
\lambda e^{-\frac{\gamma}{2} \langle \cdot 
\rangle}$. Define the operator $K : L^{2}(\br^{3}) 
\longrightarrow L^{2}(\br^{2})$ by 
\begin{equation}\label{eq2,4}
K := \frac{1}{\sqrt{2}} (p \otimes c) 
\begin{pmatrix}
1 & 0 \\
0 & 0
\end{pmatrix}
e_+ \vert V \vert^{\frac{1}{2}}.
\end{equation} 
To be more explicit we have
\begin{equation}
(K \psi)(\textbf{x}) = \frac{1}{\sqrt{2}} 
\int_{\br^{3}}
{\mathcal P}_{b}(\xp,\xp^\prime) 
\begin{pmatrix}
1 & 0 \\
0 & 0
\end{pmatrix}
\vert V \vert^{\frac{1}{2}} (\xp^\prime,x_3^\prime) 
\psi (\xp^\prime,x_3^\prime)
d\xp^\prime dx_3^\prime,
\end{equation}
where ${\mathcal P}_{b}(\cdot,\cdot)$ is the integral 
kernel of the orthogonal projection $p$. Obviously the 
adjoint operator
$K^{\ast} : L^{2}(\br^{2}) \longrightarrow 
L^{2}(\br^{3})$
verifies
\begin{equation}
(K^{\ast}\varphi)(\xp,x_3) = 
\frac{1}{\sqrt{2}} \vert V \vert^{\frac{1}{2}} 
(\xp,x_3) 
\begin{pmatrix}
1 & 0 \\
0 & 0
\end{pmatrix}
(p \varphi)(\xp).
\end{equation}
Then
\begin{equation}\label{eq2,5}
K K^{\ast} = \begin{pmatrix}
1 & 0 \\
0 & 0
\end{pmatrix} 
\frac{p \textbf{\textup{W}} p}{2}
\end{equation}
so that it is a self-adjoint positif compact operator.

Now let us introduce technical important conditions.
Define the constant
\begin{equation}\label{eq2,51}
N_{\gamma,\zeta} := \min \left( \frac{\gamma}{2},
\sqrt{\zeta} \right),
\end{equation}
where $\gamma$ and $\zeta$ are respectively 
defined by \eqref{eq1,13} and \eqref{eq1,8}.
Let $\mathscr{W}_\pm \Subset \Omega_\pm$ be open 
relatively compact subsets of 
$\pm ]0,N_{\gamma,\zeta}^2[ e^{\pm i]-2\theta_0,2\varepsilon_0[}$
such that $0 < \min(\theta_0,\varepsilon_0)$ and 
$\max(\theta_0,\varepsilon_0) < \frac{\pi}{2}$. Let
$r > 0$ be a small parameter and assume that
$\mathscr{W}_\pm$ and $\Omega_\pm$ are simply connected
sets independent of $r$. We also assume that the 
intersections between $\pm ]0,N_{\gamma,\zeta}^2[$ and 
$\mathscr{W}_\pm$, $\Omega_\pm$ are intervals. Hence
we set $I_\pm := \mathscr{W}_\pm \cap \pm ]0,N_{\gamma,\zeta}^2[$.

In the case where the potential $V$ is of definite
sign $J = sign(V)$ the representation of the SSF near
zero can be specified. This required firstly that for 
$k \in \bc$ small enough the operator 
$I + \frac{iJ}{k} K^\ast K$ be 
invertible. That is for 
$\text{Arg} \hspace*{0.1cm} k \neq -J\frac{\pi}{2}$.
Secondly that the condition
\begin{equation}\label{eq2,6}
-J\frac{\pi}{2} \notin \left( \frac{\pi}{2} \right)_\mp 
\pm [-\theta_0,\varepsilon_0]
\end{equation} 
be satisfied with respect to the subscript "$\pm$"
in $\mathscr{W}_\pm \Subset \Omega_\pm$,
$I_\pm := \mathscr{W}_\pm \cap \pm ]0,N_{\gamma,\zeta}^2[$,
where $\left( \frac{\pi}{2} \right)_- = 0$ and 
$\left( \frac{\pi}{2} \right)_+ = \frac{\pi}{2}$.

\begin{rem}\label{r2,1}
$-$

\textbf{(i)} Under our considerations on 
$\theta_0$ and $\varepsilon_0$ above condition 
\eqref{eq2,6} is satisfied in the case "$+$" for 
$J = \pm$. Namely
\begin{equation}
\mp \frac{\pi}{2} \notin 
[-\theta_0,\varepsilon_0], \quad J = \pm.
\end{equation}

\textbf{(ii)} In the case "$-$" condition 
\eqref{eq2,6} is satisfied  for 
$J = +$. Namely
\begin{equation}
- \frac{\pi}{2} \notin 
\left[ \frac{\pi}{2} - \varepsilon_0,
\frac{\pi}{2} + \theta_0 \right].
\end{equation}
\end{rem}

From now on the set of the resonances near 
zero of $\chh$ (see Definition \ref{d3,1})
will be denoted by $\text{Res}(\chh)$.
Our first main result goes as follows:

\begin{theo}\label{t2,1}
$\textup{(Breit-Wigner approximation)}$

Assume that assumption \eqref{eq1,13} holds. 
Let $\mathscr{W}_\pm \Subset \Omega_\pm$ 
be open relatively compact subsets of 
$\pm ]0,N_{\gamma,\zeta}^2[ e^{\pm i]-2\theta_0,2
\varepsilon_0[}$ as above. Choose moreover 
$0 < s_1 < \sqrt{\textup{dist} \big( \Omega_\pm,0 
\big)}$. Then there exists $r_0 > 0$ and holomorphic 
functions $g_\pm$ in $\Omega_\pm$ satisfying for any
$\mu \in rI_\pm$ and $r < r_0$
\begin{equation}\label{eq2,7}
\xi'(\mu) = \frac{1}{r \pi} \im \hspace{0.5mm} g'_\pm 
\left( \frac{\mu}{r},r \right) + 
\sum_{\substack{w \in \textup{Res}(\chh) \cap 
r \Omega_\pm \\ \im (w) \neq 0}}
\frac{\im (w)}{\pi \vert \mu - w \vert^2}
- \sum_{w \in \textup{Res}(\chh) \cap 
r I_\pm} \delta (\mu - w),
\end{equation} 
where the functions $g_\pm(z,r)$ satisfy the bound
\begin{equation}\label{eq2,8}
\begin{split}
g_\pm(z,r) & = \mathcal{O} \left[ \textup{Tr} 
\hspace{0.4mm} \one_{(s_1\sqrt{r},\infty)} 
\big( p \textbf{\textup{W}} p \big)
\vert \ln r \vert + 
\Tilde{n}_1 \left( \frac{1}{2} s_1\sqrt{r} \right) + 
\Tilde{n}_2 \left( \frac{1}{2} s_1\sqrt{r} \right) 
\right] \\
& = \mathcal{O} \left( \vert \ln r \vert
r^{-1/m_\perp} \right),
\end{split}
\end{equation}
uniformly with respect to $0 < r < r_0$ and $z \in 
\Omega_\pm$, with $\Tilde{n}_q (\cdot)$, $q = 1$, $2$ 
defined by \eqref{eq4,17}.

Furthermore for potentials of definite sign 
$J = sign(V)$ we have for $\lambda \in rI_\pm$
\begin{equation}\label{eq2,9}
\frac{1}{r} \im \hspace{0.5mm} g'_\pm 
\left( \frac{\lambda}{r},r \right) = 
\frac{1}{r} \im \hspace{0.5mm} \Tilde{g}'_\pm 
\left( \frac{\lambda}{r},r \right) + 
\im \hspace{0.5mm} \Tilde{g}'_{1,\pm}(\lambda)
+ \one_{(0,N_{\gamma,\zeta}^2)}(\lambda) 
J \phi'(\lambda),
\end{equation}
where the function $\phi$ is defined by
\begin{equation}\label{eq2,10}
\phi(\lambda) := \textup{Tr} 
\hspace{0.4mm} \left( \arctan 
\frac{K^\ast K}{\sqrt{\lambda}} \right) 
= \textup{Tr} \hspace{0.4mm} \left( \arctan 
\frac{p\textbf{\textup{W}}p}{2\sqrt{\lambda}}
\right),
\end{equation}
the functions $z \mapsto \Tilde{g}_\pm (z,r)$ 
being holomorphic in $\Omega_\pm$ and satisfying
\begin{equation}\label{eq2,11}
\Tilde{g}_\pm (z,r) = \mathcal{O} 
\big( \vert \ln r \vert \big),
\end{equation}
uniformly with respect to $0 < r < r_0$ and 
$z \in \Omega_\pm$. The functions $z \mapsto 
\Tilde{g}_{1,\pm}(z)$ are holomorphic in 
$\pm ]0,N_{\gamma,\zeta}^2[ 
e^{\pm i]-2\theta_0,2\varepsilon_0[}$
and there exists a positive constant 
$C_{\theta_0}$ depending on $\theta_0$ such 
that
\begin{equation}\label{eq2,12}
\vert \Tilde{g}_{1,\pm}(z) \vert \leq
C_{\theta_0} \sigma_2 \left( \sqrt{\vert z \vert} 
\right)^{\frac{1}{2}}
\end{equation}
for $z \in \pm ]0,N_{\gamma,\zeta}^2[ 
e^{\pm i]-2\theta_0,2\varepsilon_0[}$, where the 
quantity $\sigma_2(\cdot)$ is defined by 
\eqref{eq4,15}.
\end{theo}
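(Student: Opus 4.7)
The plan is to express $\xi'(\mu)$ through the logarithmic derivative of the regularized determinant in \eqref{eq1,24}, factor this determinant near the origin into a Weierstrass-type product encoding the resonances multiplied by a nonvanishing holomorphic piece, and read off \eqref{eq2,7}--\eqref{eq2,9} as the imaginary part of that logarithmic derivative. By the Birman--Schwinger formalism and the definition of resonances set up in Sections~\ref{s3}--\ref{s4}, the zeros of $D(z):={\det}_2\bigl(I+V(\chho-z)^{-1}\bigr)$ in a neighbourhood of $0$, lifted to the appropriate Riemann surface through $z\mapsto\sqrt z$, coincide with multiplicities with $\textup{Res}(\chh)\cap r\Omega_\pm$. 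Since Lemma~\ref{l5,1} will relate $\xi'$ and $\xi_2'$ by an explicitly holomorphic correction that can be absorbed into $g_\pm$, it suffices to produce the decomposition for $\xi_2'$.

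Next I would isolate the singular part of $D$ at $z=0$. Using the fibre decomposition \eqref{eq1,9} only the first diagonal block contributes to the singularity (since $\dim\ker H_2(b)=0$ by \eqref{eq1,7}), and on $\ker H_1(b)=p\,L^2(\br^2)$ the longitudinal resolvent $\bigl(-d^2/dx_3^2-z\bigr)^{-1}$ has the explicit kernel $\tfrac{i}{2\sqrt z}e^{i\sqrt z|x_3-x_3'|}$. Combining this with the exponential weight of \eqref{eq1,13} and the operator $K$ of \eqref{eq2,4}, one obtains a decomposition of the form
\begin{equation*}
\vert V\vert^{1/2}(\chho-z)^{-1}\vert V\vert^{1/2}=\frac{1}{i\sqrt z}\,\mathcal A(z)\,K^{\ast}K\,\mathcal A(z)^{\ast} + \mathcal T(z,r),
\end{equation*}
where $\mathcal A(z)$ is a bounded holomorphic operator-valued factor and $\mathcal T(z,r)$ is Hilbert--Schmidt--valued and holomorphic on the image of $\Omega_\pm$ under the square-root lift, with trace norm controlled by the Toeplitz eigenvalue counting functions $\Tilde{n}_q$ of Section~\ref{s4} and by $\sigma_2$ from \eqref{eq4,15}.

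Applying the finite-meromorphic machinery of Section~\ref{sa} to $D$ on $r\Omega_\pm$ then produces a factorization
\begin{equation*}
D(z)=\!\!\!\prod_{w\in\textup{Res}(\chh)\cap r\Omega_\pm}\!\!\!\Bigl(1-\tfrac{z}{w}\Bigr)^{m(w)}\,\exp\bigl(g_\pm(z/r,r)\bigr),
\end{equation*}
with $g_\pm(\cdot,r)$ holomorphic on $\Omega_\pm$. Jensen's formula combined with the resonance upper bound from Section~\ref{s4} (itself a consequence of the Toeplitz eigenvalue bounds for $p\textbf{\textup{W}}p$) yields the size estimate \eqref{eq2,8}; the radius $s_1$ is precisely the Jensen radius at which one balances the determinant bound against the zero count. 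Taking the logarithmic derivative of $D$ and using Sokhotski--Plemelj to recover the harmonic measure term $\im(w)/(\pi|\mu-w|^2)$ from non-real resonances and the $-\delta(\mu-w)$ distribution from real ones, then combining with \eqref{eq1,24}--\eqref{eq1,25} and Lemma~\ref{l5,1}, gives \eqref{eq2,7}.

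For the definite-sign refinement \eqref{eq2,9}, with $J=\pm$ scalar, the singular factor reduces to $I+(iJ/\sqrt z)K^{\ast}K$; by \eqref{eq2,5}, $K^{\ast}K$ is trace class with spectrum $\{\nu_j/2\}$ where $\{\nu_j\}$ are the eigenvalues of $p\textbf{\textup{W}}p$. The elementary identity $\im\log(1+iJ\nu/\sqrt\lambda)=J\arctan(\nu/\sqrt\lambda)$ for $\lambda>0$ and $\nu\geq 0$, taken under the trace, contributes exactly the term $\one_{(0,N_{\gamma,\zeta}^2)}(\lambda)\,J\phi'(\lambda)$ in \eqref{eq2,9}; the linear subtraction built into ${\det}_2$ generates a holomorphic remainder $\Tilde g_{1,\pm}$ whose bound \eqref{eq2,12} comes from the Hilbert--Schmidt norm of the cross term between $K^{\ast}K$ and $\mathcal T(z,r)$, while $\Tilde g_\pm$ collects the remaining holomorphic pieces and satisfies \eqref{eq2,11}. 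The hard part is the factorization step: simultaneously obtaining a Jensen-type upper bound on $D(z)$ and the sharp matching lower bound needed to identify all its zeros with resonances (with correct multiplicities) requires the full eigenvalue asymptotics for the Toeplitz operator $p\textbf{\textup{W}}p$, which is where the exponent $1/m_\perp$ in \eqref{eq2,8} ultimately originates.
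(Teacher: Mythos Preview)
Your overall strategy matches the paper's: reduce to $\xi_2'$ via Lemma~\ref{l5,1}, decompose the sandwiched resolvent as $\mathcal{T}_V(z(k))=\tfrac{iJ}{k}K^\ast K+\mathscr{A}(k)$ with $\mathscr{A}(k)$ holomorphic in $\mathcal{S}_2$ (Proposition~\ref{p4,2}; there are no extra conjugating factors $\mathcal{A}(z)$, and the remainder does not yet depend on $r$), then factor ${\det}_2(I+\mathcal{T}_V)$ via Sj\"ostrand's Weierstrass result (Proposition~\ref{p5,0}) and take imaginary parts. For the definite-sign part you correctly identify that factoring off $\det(I+iJ\mathscr{B}/k)$ exactly, as in \eqref{eq4,26}, produces $J\phi'$ through the identity $\im\log(1+iJ\nu/\sqrt\lambda)=J\arctan(\nu/\sqrt\lambda)$; the paper then applies Proposition~\ref{p5,0} to ${\det}_2(I+A(k))$ with the sharper count $N(r)=\mathcal{O}(|\ln r|)$ from Theorem~\ref{t4,1} to get \eqref{eq2,11}, and $\Tilde g_{1,\pm}$ is literally $\textup{Tr}\bigl(\mathscr{A}(\sqrt z)\tfrac{iJ}{\sqrt z}\mathscr{B}(I+\tfrac{iJ}{\sqrt z}\mathscr{B})^{-1}\bigr)$, bounded by Corollary~\ref{c4,1}.

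There is one genuine missing idea in your sketch of the general case. The three-term structure of \eqref{eq2,8} does not come from a ``Jensen radius'' choice; $s_1$ is a \emph{spectral threshold}. The paper splits $\mathscr{B}=\mathscr{B}\mathbf{1}_{[0,\frac{1}{2}s_1\sqrt r]}(\mathscr{B})+\mathscr{B}\mathbf{1}_{(\frac{1}{2}s_1\sqrt r,\infty)}(\mathscr{B})$ and simultaneously approximates $\mathscr{A}(k)=\mathscr{A}_0+\tilde{\mathscr{A}}(k)$ with $\mathscr{A}_0$ finite rank and $\|\tilde{\mathscr{A}}(k)\|<\tfrac14$. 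This yields the factorisation \eqref{eq5,11}: a finite-rank piece $\mathscr{K}(k,s)$ of rank $\mathcal{O}\bigl(\textup{Tr}\,\mathbf{1}_{(s_1\sqrt r,\infty)}(p\textbf{W}p)\bigr)$ times an invertible factor $I+\mathfrak{A}(k)$ with $\|\mathfrak{A}(k)\|<1$. Proposition~\ref{p5,0} applied to $\det(I+\mathscr{K})$ gives $g_{0,\pm}$ with the first term of \eqref{eq2,8}; ${\det}_2(I+\mathfrak{A})$ is nonvanishing and its $\log$ contributes $g_1$ controlled by $\Tilde n_2(\tfrac12 s_1\sqrt r)$; the residual trace correction gives $g_2$ controlled by $\Tilde n_1(\tfrac12 s_1\sqrt r)$. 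Without this splitting you cannot isolate those three contributions. Also, the lower bound on the determinant needed for Proposition~\ref{p5,0} is not obtained from the resonance count (which is output, not input) but from self-adjointness via \eqref{eq3,21}, giving $\|(I+\mathcal{T}_V(z))^{-1}\|=\mathcal{O}(\varsigma^{-1})$ for $\im z>\varsigma$.
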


As first consequence of the above theorem we 
have the following result describing the 
asymptotic behaviour of the SSF on the right
of the low ground state.

\begin{theo}\label{t2,2}
$\textup{(Singularity at the low ground state)}$ 

Assume that $V$ satisfies assumption 
\eqref{eq1,13} with definite sign $J = 
sign(V)$. Then
\begin{equation}\label{eq2,13}
\xi(\lambda) = \frac{J}{\pi} \phi(\lambda) + 
\mathcal{O} \left( \phi(\lambda)^{\frac{1}{2}} 
\right) + \mathcal{O} \bigl( \vert \ln \lambda 
\vert^2 \bigr)
\end{equation}
as $\lambda \searrow 0$, the function 
$\phi(\lambda)$ being defined by 
\eqref{eq2,10}.
\end{theo}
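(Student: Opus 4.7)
The plan is to deduce Theorem \ref{t2,2} from the Breit--Wigner representation of Theorem \ref{t2,1} by integrating $\xi'$ over $(\lambda,\lambda_1)$ for a small fixed reference point $\lambda_1$ and letting $\lambda \searrow 0$. By Remark \ref{r2,1}(i) the ``$+$'' case of Theorem \ref{t2,1} is available for both signs $J = \pm$, so I fix $r \in (0,r_0)$ and $\lambda_1 \in rI_+$ independent of $\lambda$, so that $\lambda \in rI_+$ for every $\lambda \in (0,\lambda_1)$. Inserting \eqref{eq2,9} into \eqref{eq2,7}, and using that $\one_{(0,N_{\gamma,\zeta}^{2})}(\mu) = 1$ on $(0,\lambda_1)$ once $\lambda_1$ is chosen small enough, yields
\begin{equation*}
\xi'(\mu) = \frac{1}{\pi r}\im \tilde g'_+\!\left(\tfrac{\mu}{r},r\right) + \frac{1}{\pi}\im \tilde g'_{1,+}(\mu) + \frac{J}{\pi}\phi'(\mu) + \mathcal{R}(\mu),
\end{equation*}
where $\mathcal{R}(\mu)$ collects the harmonic-measure sum and the $\delta$-sum over $\text{Res}(\chh)$ from \eqref{eq2,7}.

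Integrating from $\lambda$ to $\lambda_1$, the third term produces the leading contribution $\frac{J}{\pi}[\phi(\lambda_1) - \phi(\lambda)]$, which after rearrangement gives the singular term $\frac{J}{\pi}\phi(\lambda)$ of \eqref{eq2,13}. The bound \eqref{eq2,11} controls the $\tilde g_+$-contribution by $O(|\ln r|) = O(1)$ since $r$ is fixed, while \eqref{eq2,12} bounds the $\tilde g_{1,+}$-contribution by $O(\sigma_2(\sqrt\lambda)^{1/2})$. To absorb the latter into $O(\phi(\lambda)^{1/2})$, I would establish the comparison $\sigma_2(\sqrt\lambda) = O(\phi(\lambda))$ as $\lambda \searrow 0$, using that both quantities encode the spectral distribution of the trace-class operator $p\textbf{W}p$: the inequality $\arctan t \geq c \min(t,1)$ on $[0,\infty)$ combined with the definition of $\sigma_2$ as a counting function of $p\textbf{W}p$ should give this.

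The remaining work is to estimate $\int_\lambda^{\lambda_1}\mathcal{R}(\mu)\,d\mu$. The harmonic-measure integrand integrates to $\sum_{w}\frac{1}{\pi}\bigl[\arctan\tfrac{\lambda_1-\re w}{\im w} - \arctan\tfrac{\lambda-\re w}{\im w}\bigr]$, each summand bounded by $1$, so the total is at most $\#\bigl(\text{Res}(\chh) \cap r\Omega_+\bigr)$; the $\delta$-sum integrates to minus the number of real resonances of $\chh$ in $(\lambda,\lambda_1)$. The upper bounds on resonance density near $0$ from Section \ref{s4} should bound both contributions by $O(|\ln\lambda|^2)$, matching the remainder in \eqref{eq2,13}. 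The hard parts will be the spectral-theoretic comparison $\sigma_2(\sqrt\lambda) = O(\phi(\lambda))$, and invoking the Section \ref{s4} resonance counts at sufficient precision to keep the error at $O(|\ln\lambda|^2)$ rather than at a larger power; the $\delta$-contribution is particularly delicate because each real resonance produces a full unit jump in $\xi$.
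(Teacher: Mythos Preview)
Your approach has a genuine gap at the very first step. The domains $\mathscr{W}_+\Subset\Omega_+$ in Theorem \ref{t2,1} are \emph{relatively compact} subsets of $(0,N_{\gamma,\zeta}^2)\,e^{i(-2\theta_0,2\varepsilon_0)}$, so the interval $I_+=\mathscr{W}_+\cap(0,N_{\gamma,\zeta}^2)$ is bounded away from~$0$. For a \emph{fixed} $r\in(0,r_0)$ the set $rI_+$ on which \eqref{eq2,7}--\eqref{eq2,9} are valid is therefore an interval of the form $(ra,rb)$ with $a>0$; it does \emph{not} contain arbitrarily small $\lambda$. Your claim that ``$\lambda\in rI_+$ for every $\lambda\in(0,\lambda_1)$'' is thus false, and you cannot integrate the Breit--Wigner representation from a fixed $\lambda_1$ all the way down to $\lambda\searrow 0$ with a single choice of $r$.

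The paper's proof repairs this by a dyadic telescoping: one takes $I_+=[1,2]$ and applies Theorem \ref{t2,1} at the varying scales $r_n=2^n\lambda$, $n=0,\dots,N$, with $N=\mathcal{O}(|\ln\lambda|)$ chosen so that $2^{N+1}\lambda$ lands in a fixed interval near $N_{\gamma,\zeta}^2/2$. Integrating over each $r_n[1,2]$ gives $\xi(r_{n+1})-\xi(r_n)$; the $\tilde g_+$ contribution is $\mathcal{O}(|\ln r_n|)$ by \eqref{eq2,11}, and Theorem \ref{t4,1} (which needs the definite-sign hypothesis) bounds the number of resonances in $r_n\Omega_+$ by $\mathcal{O}(|\ln r_n|)$, so the harmonic-measure and $\delta$ sums also contribute $\mathcal{O}(|\ln r_n|)$ after integration. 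Summing $\sum_{n=0}^{N}\mathcal{O}(|\ln r_n|)$ is precisely what produces the $\mathcal{O}(|\ln\lambda|^2)$ remainder. Your intuition that $\sigma_2(\sqrt{\lambda})=\mathcal{O}(\phi(\lambda))$ is correct---the paper uses $u^2/(1+u^2)\le\arctan u$---but without the dyadic structure the rest of the argument does not go through.
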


\begin{rem}\label{r2,2}
$-$

\textbf{(i)} Since for $\lambda > 0$ 
\begin{equation}
\xi(-\lambda) = - \# \big\lbrace 
\text{discrete eigenvalues of $\chh$
lying in $(-\infty,-\lambda)$} \big\rbrace
\end{equation}
then for $V \geq 0$ we have 
$\xi(-\lambda) = 0$.

\textbf{(ii)} In \cite{rage} the 
singularities of the SSF near the origin 
are studied. If $\textbf{\textup{W}}$ 
satisfies assumptions \textbf{(A1)}, 
\textbf{(A2)} or \textbf{(A3)} implying 
respectively \eqref{eq02,2}, \eqref{eq02,3} 
or \eqref{eq02,4} then it is proved in
\cite{rage} that
\begin{equation}\label{eq2,131}
\xi(\lambda) = \frac{J}{\pi} \phi(\lambda)
\big( 1 + o(1) \big), \quad \lambda 
\searrow 0.
\end{equation}
Thus \eqref{eq2,13} provides a 
remainder estimate of \eqref{eq2,131} 
when $\textbf{\textup{W}}$ satisfies
assumption \textbf{(A1)}. However for 
$V \leq 0$ it is proved in \cite{rage} 
that
\begin{equation}
\xi(-\lambda) = -\textup{Tr} 
\hspace{0.4mm} \one_{(2\sqrt{\lambda},
\infty)} \big( p \textbf{\textup{W}} p 
\big) \big( 1 + o(1) \big), \quad 
\lambda \searrow 0.
\end{equation}
\end{rem}

As second consequence of Theorem \ref{t2,1} we 
have the following

\begin{theo}\label{t2,3}
\textup{(Local trace formula)} 

Let the domains $\mathscr{W}_\pm \Subset \Omega_\pm$ be 
as in Theorem \ref{eq2,1}. Assume that $f_\pm$ is 
holomorphic in a neighbourhood of $\Omega_\pm$
and let $\psi_\pm \in C_0^{\infty} \big( \Omega_\pm \cap 
\br \big)$ satisfy $\psi_\pm(\lambda) = 
1$ near $\Omega_\pm \cap \br$. Then under the assumptions 
of Theorem \ref{eq2,1}
\begin{equation}\label{eq2,14}
\textup{Tr} \hspace{0.4mm} \left[ (\psi_\pm f_\pm)
\left( \frac{\chh}{r} \right) - (\psi_\pm f_\pm)
\left( \frac{\chho}{r} \right) \right] =
\sum_{w \in \textup{Res}(\chh) \cap 
r \mathscr{W}_\pm} f_\pm \left( \frac{w}{r} \right)
+ E_{f_\pm,\psi_\pm}(r)
\end{equation}
with
\begin{equation}\label{eq2,15}
\vert E_{f_\pm,\psi_\pm}(r) \vert \leq M (\psi_\pm)
\sup \big\lbrace \vert f_\pm(z) \vert : z \in 
\Omega_\pm \setminus \mathscr{W}_\pm : \im(z) \leq 0
\big\rbrace \times N(r),
\end{equation}
where 
\begin{equation}\label{eq2,16}
\begin{split}
N(r) & = \textup{Tr} \hspace{0.4mm} 
\one_{(s_1\sqrt{r},\infty)} 
\big( p \textbf{\textup{W}} p \big)
\vert \ln r \vert + 
\Tilde{n}_1 \left( \frac{1}{2} s_1\sqrt{r} \right) + 
\Tilde{n}_2 \left( \frac{1}{2} s_1\sqrt{r} \right) 
\\
& = \mathcal{O} \left( \vert \ln r \vert
r^{-1/m_\perp} \right).
\end{split}
\end{equation}
\end{theo}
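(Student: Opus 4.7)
The plan is to combine the trace identity (1.23) with the Breit--Wigner representation (2.7) of Theorem \ref{t2,1}, then isolate the resonance contributions by Cauchy/Green calculus and control the residual holomorphic piece using the bound (2.8).

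First I would apply (1.23) to the scaled test function $F(\lambda) := (\psi_\pm f_\pm)(\lambda/r) \in C_0^{\infty}(\br)$, giving
\[
\textup{Tr}\bigl[F(\chh) - F(\chho)\bigr] = -\langle \xi', F\rangle.
\]
Since $F$ is supported in a compact subset of $r\Omega_\pm \cap \br$, I would substitute (2.7) into the right-hand side. This splits $\langle \xi', F\rangle$ into three pieces: the holomorphic $g_\pm$-integral, the sum of harmonic measures of complex resonances in $r\Omega_\pm$, and the delta-function sum over real resonances in $rI_\pm$. The real-resonance piece immediately yields $\sum_{w \in \textup{Res}(\chh) \cap rI_\pm} f_\pm(w/r)$, because $\psi_\pm \equiv 1$ on a neighbourhood of $\Omega_\pm \cap \br$.

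Next I would extract $f_\pm(w/r)$ from each complex-resonance harmonic measure. Writing $\frac{\im w}{\pi|\lambda - w|^{2}} = -\frac{1}{\pi}\im \frac{1}{\lambda - w}$ and applying Stokes' formula to an almost-analytic extension of $\psi_\pm f_\pm$ into $\Omega_\pm$, one obtains, for $w \in r\mathscr{W}_\pm$ with $\im w \neq 0$,
\[
\int_{\br} \psi_\pm f_\pm\!\left(\tfrac{\lambda}{r}\right)\frac{\im w}{\pi|\lambda - w|^{2}}\,d\lambda = -f_\pm\!\left(\tfrac{w}{r}\right) + R_\pm\!\left(\tfrac{w}{r}\right),
\]
where $R_\pm$ is a remainder supported on $\bar\partial\psi_\pm$, hence vanishing on $\mathscr{W}_\pm$ and bounded by $\sup\{|f_\pm(z)| : z \in \Omega_\pm \setminus \mathscr{W}_\pm\}$ times a constant $M(\psi_\pm)$. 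Resonances in $r\Omega_\pm \setminus r\mathscr{W}_\pm$ are moved directly into the error, their count being controlled by $N(r)$ from the upper bounds proved in Section \ref{s4}. Combining with the real-resonance contribution, the main sum $\sum_{w \in \textup{Res}(\chh) \cap r\mathscr{W}_\pm} f_\pm(w/r)$ emerges, with sign conventions dictated by the location (lower half-plane) of the resonances under the analytic continuation of Definition \ref{d3,1}.

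Finally I would estimate the $g_\pm$-term $-\frac{1}{r\pi}\int (\psi_\pm f_\pm)(\lambda/r)\, \im g'_\pm(\lambda/r, r)\,d\lambda$. After the change of variable $\mu = \lambda/r$ and one integration by parts to transfer the derivative off $g_\pm$, the bound (2.8) gives a contribution of order $\vert \ln r\vert\, r^{-1/m_\perp}$ against $(\psi_\pm f_\pm)'$. To replace $\sup_{\Omega_\pm}|f_\pm|$ by $\sup_{\Omega_\pm \setminus \mathscr{W}_\pm}|f_\pm|$ in the final bound, I would deform the real-line contour into $\Omega_\pm \setminus \mathscr{W}_\pm$ (where $\psi_\pm$ is no longer $1$), exploiting holomorphy of $g_\pm$ and $f_\pm$ in $\Omega_\pm$; the Cauchy--Stokes formula then produces an estimate entirely in terms of $\sup\{|f_\pm(z)| : z \in \Omega_\pm \setminus \mathscr{W}_\pm,\ \im(z) \leq 0\}$.

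The main obstacle is the Cauchy/Stokes step for the complex resonances: one must simultaneously (i) produce the exact value $-f_\pm(w/r)$ for each $w \in r\mathscr{W}_\pm$, (ii) absorb resonances in the annular region $r\Omega_\pm \setminus r\mathscr{W}_\pm$ into the error, and (iii) arrange the contour so that the error bound involves only $\sup_{\Omega_\pm \setminus \mathscr{W}_\pm, \im z \leq 0}|f_\pm|$ and the polynomial/logarithmic factor $N(r)$. Tracking signs from the half-plane in which resonances are detected, and ensuring the error constant $M(\psi_\pm)$ is independent of $r$, are the delicate points.
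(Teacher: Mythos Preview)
Your proposal is correct and follows essentially the same route as the paper: apply \eqref{eq1,23} and Theorem~\ref{t2,1}, extract the resonance contributions from the harmonic measures via the Green/Stokes formula applied to an almost-analytic extension $\tilde\psi$ with $\tilde\psi=1$ on $\mathscr{W}_\pm$ and $\operatorname{supp}\bar\partial\tilde\psi\subset\Omega_\pm\setminus\mathscr{W}_\pm$, and control the $g_\pm$-integral by \eqref{eq2,8}. The only ingredient you leave implicit is the elementary volume bound $\int_{\Omega}|z-w|^{-1}\,L(dz)\le 2\sqrt{2\pi\,\mathrm{vol}(\Omega)}$ (cf.\ \cite[(5.3)]{pet}), which the paper combines with the resonance count of Theorem~\ref{t4,2} to estimate the sum of $\bar\partial\tilde\psi$ correction terms uniformly in $r$; this is precisely what yields the constant $M(\psi_\pm)$ independent of $r$.
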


\begin{rem}
\textup{(Schrödinger operator)}

Our results remain true if instead 
the operator $\chh$ defined by \eqref{eq1,11}
we consider in $L^2(\br^3,\bc)$ the perturbed
Schrödinger operator
\begin{equation}
(-i\nabla - \textbf{A})^{2} - b + V 
\end{equation}
on $Dom \big( (-i\nabla - \textbf{A})^{2} - 
b \big)$ with $V(\textbf{\textup{x}}) = 
\mathcal{O} \left( \langle \xp \rangle^{-\mper} 
\hspace{0.5mm} e^{-\gamma \langle x_3 \rangle} 
\right)$ for any $\textbf{\textup{x}} \in \br^3$,
$m_\perp > 2$, $\gamma > 0$
as in \eqref{eq1,13}. Here $\textbf{\textup{W}}$ 
is just given by $\displaystyle 
\textbf{\textup{W}}(\xp) = \int_\br \vert V 
(\xp,x_3) \vert dx_3$ for any $\xp \in \br^2$ 
and in identities \eqref{eq2,4}-\eqref{eq2,5} 
the matrix 
$\begin{pmatrix}
1 & 0 \\
0 & 0
\end{pmatrix}$ is removed.
\end{rem}

\textbf{Acknowledgements}.
The author is partially supported by the Chilean 
Program \textit{N\'ucleo Milenio de F\'isica Matem\'atica
RC$120002$}. The author wishes to express his gratitude to
V. Bruneau for suggesting the study of this problem.

\section{Definition of the resonances}\label{s3}

The potential $V$ is assumed to satisfy 
\eqref{eq1,13}. We recall also that $p = p(b)$ is the 
orthogonal projection onto $\text{Ker} \hspace{0.5mm} 
H_{1}$ with $H_1 = H_1(b)$ defined by \eqref{eq1,6}.

Set $P := p \otimes 1$, 
$Q := I - P$. Introduce the orthogonal 
projections in $L^2(\br^3)$
\begin{equation}\label{eq3,1}
\textup{P} := \begin{pmatrix} 
   P & 0 \\ 
   0 & 0 
\end{pmatrix}, \hspace{1cm} 
\textup{Q} := \textup{I} - \textup{P} = 
\begin{pmatrix} 
   Q & 0 \\
   0 & I 
\end{pmatrix}.
\end{equation}
For $z \in \bc \setminus [0,+\infty)$
\eqref{eq1,11} and \eqref{eq1,7} imply that
\begin{equation}\label{eq3,2}
(\chho - z)^{-1} \textup{P} = 
\begin{pmatrix}
p \otimes \mathscr{R}(z) & 0 \\
   0 & 0
\end{pmatrix},
\end{equation}
where
$\mathscr{R}(z) := \left( -\frac{d^2}{dt^2} 
- z \right)^{-1}$ acts in $L^{2}(\br)$. 
Thus
\begin{equation}\label{eq3,3}
\big( \chho - z \big)^{-1} = 
\big( p \otimes \mathscr{R}(z) \big) 
\begin{pmatrix}
1 & 0\\
0 & 0
\end{pmatrix} + \big( \chho - 
z \big)^{-1} \textup{Q}.
\end{equation}
The one-dimensional resolvent $\mathscr{R}(z)$
introduced above admits the integral kernel
\begin{equation}\label{eq3,4}
\mathscr{N}_{z}(t,t') = 
\frac{i \textup{e}^{i \sqrt{z} \vert t - 
t' \vert}}{2 \sqrt{z}}
\end{equation}
if the branch $\im(\sqrt{z})$ is chosen such
that $\im(\sqrt{z}) > 0$. In the sequel we set
\begin{equation}\label{eq3,5}
\mathbb{C}^{+} := \big\lbrace z \in \bc:
\im(z) > 0 \big\rbrace \quad \text{and} \quad 
\bc_{1/2}^+ := \big\lbrace k \in 
\bc : k^{2} \in \bc^+ \big\rbrace.
\end{equation}
With respect to the variable $k$ we define 
the pointed disk
\begin{equation}\label{eq3,6}
D(0,\epsilon)^\ast := \big\lbrace k \in 
\mathbb{C} : 0 < \vert k \vert < \epsilon 
\big\rbrace
\end{equation}
with
\begin{equation}\label{eq3,7}
\epsilon < N_{\gamma,\zeta}
\end{equation}
the constant defined by \eqref{eq2,51}.

In order to define the resonances near zero
first we extend holomorophically 
$(H_{0} - k^2)^{-1} \textup{P}$ 
near $k = 0$.

\begin{prop}\label{p3,1}
Let $\gamma > 0$ be constant and set $z(k) := k^2$.

$\textup{\textbf{(i)}}$ The operator 
valued-function 
$$
k \longmapsto \left( \big( \chho - z(k) \big)
^{-1} \textup{P} : e^{-\frac{\gamma}{2} 
\langle t \rangle} L^{2}(\br^3) \longrightarrow 
e^{\frac{\gamma}{2} \langle t \rangle} 
L^{2}(\br^{3}) \right)
$$ 
admits a holomorphic extension from 
$\mathbb{C}_{1/2}^+ \cap D(0,\epsilon)^\ast$ 
to $D(0,\epsilon)^\ast$.

$\textup{\textbf{(ii)}}$ For $v_\perp(\xp) := 
\langle \xp \rangle^{-\alpha}$ with $\alpha > 1$
the operator valued-function
$$
T_{v_\perp} : k \longmapsto v_\perp(\xp) 
e^{-\frac{\gamma}{2} \langle t \rangle} 
\big( \chho - z(k) \big)^{-1} \textup{P} 
e^{-\frac{\gamma}{2} \langle t \rangle}
$$ 
has a holomorphic extension to 
$D(0,\epsilon)^{\ast}$ with values in the
Hilbert-Schmidt class 
$\sd \left( L^{2}(\mathbb{R}^{3}) \right)$.
\end{prop}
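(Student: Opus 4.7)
The plan is to reduce both parts to the study of the weighted one-dimensional resolvent $\mathscr{R}(k^2)$ via the block decomposition \eqref{eq3,2}, and then to use the explicit kernel \eqref{eq3,4} to carry out the holomorphic extension by hand. For part \textbf{(i)}, formula \eqref{eq3,2} gives
\[
\bigl(\chho - z(k)\bigr)^{-1}\textup{P} = \begin{pmatrix} p \otimes \mathscr{R}(k^2) & 0 \\ 0 & 0 \end{pmatrix},
\]
so after sandwiching with $e^{-\gamma\langle\cdot\rangle/2}$ on both sides the problem reduces to the one-dimensional factor $e^{-\gamma\langle t\rangle/2}\mathscr{R}(k^2)e^{-\gamma\langle t\rangle/2}$. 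Under the branch choice $\sqrt{z(k)} = k$ on $\bc_{1/2}^+$, the kernel \eqref{eq3,4} becomes $\tfrac{i}{2k}\,e^{ik|t-t'|}$, which is manifestly meromorphic on all of $\bc$ with a single simple pole at $k=0$, and therefore holomorphic on $D(0,\epsilon)^\ast$.

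To promote this pointwise holomorphy to operator-valued holomorphy I would use the constraint $\epsilon < N_{\gamma,\zeta} \leq \gamma/2$ to ensure $|\im k| < \gamma/2$ for all $k\in D(0,\epsilon)^\ast$. Combined with $|t-t'| \leq \langle t \rangle + \langle t'\rangle$ this gives the pointwise estimate
\[
\Bigl| e^{-\gamma\langle t \rangle/2}\, \tfrac{i}{2k}\,e^{ik|t-t'|}\, e^{-\gamma \langle t'\rangle/2}\Bigr| \leq \tfrac{1}{2|k|}\, e^{-(\gamma/2-|\im k|)(\langle t\rangle + \langle t'\rangle)},
\]
which is square-integrable over $\br^2$ with norm locally uniform in $k$. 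Hence $e^{-\gamma\langle t\rangle/2}\mathscr{R}(k^2)e^{-\gamma\langle t\rangle/2}$ is a Hilbert--Schmidt operator on $L^2(\br)$ whose kernel depends holomorphically on $k$; a standard weak-to-norm argument, together with dominated convergence on difference quotients of the kernel, upgrades this to norm holomorphy of the operator-valued map on $D(0,\epsilon)^\ast$. Tensoring with the bounded projection $p$ yields the bounded map between the weighted spaces claimed in \textbf{(i)}.

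For part \textbf{(ii)}, multiplying the identity above on the left by $v_\perp(\xp)$ yields the tensor representation
\[
T_{v_\perp}(k) = \begin{pmatrix} (v_\perp p)\otimes\bigl(e^{-\gamma\langle t\rangle/2}\mathscr{R}(k^2) e^{-\gamma\langle t\rangle/2}\bigr) & 0 \\ 0 & 0 \end{pmatrix}.
\]
Using the multiplicativity $\|A\otimes B\|_{\sd}=\|A\|_{\sd}\|B\|_{\sd}$, it then suffices to verify that each factor is Hilbert--Schmidt. The second factor was handled above. For the first, $p=p^\ast$ and $p^2=p$ give
\[
\|v_\perp p\|_{\sd}^2 = \textup{Tr}\bigl(v_\perp\, p\, v_\perp\bigr) = \int_{\br^2} v_\perp(\xp)^2\,\mathcal{P}_b(\xp,\xp)\,d\xp,
\]
which is finite by the uniform bound \eqref{eq1,81} on $\mathcal{P}_b(\xp,\xp)$ and by $v_\perp\in L^2(\br^2)$ (which requires exactly $\alpha>1$). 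These estimates together furnish the $\sd$-valued extension with locally uniform bound, and the holomorphy of $k\mapsto T_{v_\perp}(k)$ in the Hilbert--Schmidt topology then follows from the holomorphy of the kernel and dominated convergence applied to the kernel difference quotients.

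The main technical point, which is really bookkeeping rather than a genuine obstacle, is aligning the constraint $\epsilon < N_{\gamma,\zeta}$ with the exponential weights. For $\im k < 0$ the factor $e^{ik|t-t'|}$ grows in $|t-t'|$, and the weights must dominate this growth after the triangle inequality; the $\gamma/2$ component of $N_{\gamma,\zeta}=\min(\gamma/2,\sqrt{\zeta})$ is tailored precisely to keep $\gamma/2 - |\im k|>0$ throughout $D(0,\epsilon)^\ast$, ensuring integrability. The complementary constant $\sqrt{\zeta}$ does not enter here but will be used later in the separate analysis of the $\textup{Q}$-part of the resolvent.
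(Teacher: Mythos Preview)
Your proof is correct and follows essentially the same approach as the paper: both reduce via \eqref{eq3,2} to the weighted one-dimensional resolvent $e^{-\gamma\langle t\rangle/2}\mathscr{R}(k^2)e^{-\gamma\langle t\rangle/2}$, show it is Hilbert--Schmidt with holomorphic kernel on $D(0,\epsilon)^\ast$ thanks to $\epsilon<\gamma/2$, and for \textbf{(ii)} tensor with $v_\perp p$ using the identity $\|v_\perp p\|_{\sd}^2=\int v_\perp^2\,\mathcal{P}_b(\xp,\xp)\,d\xp$ together with \eqref{eq1,81}. Your write-up is in fact somewhat more explicit than the paper's on the kernel estimate and on the passage from kernel holomorphy to norm holomorphy.
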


\begin{proof}
\textbf{(i)} Introduce
\begin{equation}
L(k) = \big{[} p \otimes \mathcal{R}(k^{2}) 
\big{]} \begin{pmatrix}
1 & 0\\
0 & 0
\end{pmatrix}
\end{equation}
acting from $e^{-\frac{\gamma}{2} 
\langle t \rangle} L^{2}(\mathbb{R}^{3})$ to 
$e^{\frac{\gamma}{2} \langle t \rangle} 
L^{2}(\mathbb{R}^{3})$. The operator 
$\mathscr{N}(k) := e^{-\frac{\gamma}{2} 
\langle t \rangle} \mathscr{R}(k^{2}) 
e^{-\frac{\gamma}{2} \langle t \rangle}$ 
admits the integral kernel
\begin{equation}\label{eq3,8}
e^{-\frac{\gamma}{2} \langle t \rangle} 
\frac{i \textup{e}^{i k \vert t - t' \vert}}
{2 k} \textup{e}^{-\frac{\gamma}{2} \langle 
t' \rangle}.
\end{equation}
It is easy to check that the integral
kernel \eqref{eq3,8} belongs to $L^{2}(\br)$ 
once $\im(k)> -\frac{\gamma}{2}$, $k \in \bc^\ast$. 
Then for $\epsilon < \frac{\gamma}{2}$ we can
extend holomorphically $k \longmapsto L(k) \in 
\mathscr{L} \left( e^{-\frac{\gamma}{2} 
\langle t \rangle} L^{2}(\br^{3}),
e^{\frac{\gamma}{2} \langle t \rangle} 
L^{2}(\br^{3}) \right)$ from $\bc_{1/2}^{+} 
\cap D(0,\epsilon)^{\ast}$ to 
$D(0,\epsilon)^\ast$. This together with
\eqref{eq3,2} imply that $k \longmapsto 
\big( \chho - z(k) \big)^{-1} \textup{P} 
\in \mathscr{L} \left( e^{-\frac{\gamma}{2} 
\langle t \rangle} L^{2}(\br^{3}),
e^{\frac{\gamma}{2} \langle t \rangle} 
L^{2}(\br^{3}) \right)$ admits a holomorphic
extension to $D(0,\epsilon)^{\ast}$.

\textbf{(ii)} Thanks to \eqref{eq3,2}
\begin{equation}\label{eq3,9}
T_{v_\perp}(k) = \left[ v_\perp p \otimes 
\mathscr{N}(k) \right] \begin{pmatrix}
1 & 0\\
0 & 0
\end{pmatrix}.
\end{equation}
The operator $\mathscr{N}(k) \in 
\sd \big( L^{2}(\br) \big)$ following 
the proof of assertion \textbf{(i)} for 
$\im(k) > -\frac{\gamma}{2}$, $k \in \bc^\ast$. 
Since $v_\perp^{2} \in L^1(\br^2)$ then by 
\cite[Lemma 2.3]{rage} $p v_\perp^{2} p$ 
is a trace class operator in $ L^{2}(\br^2)$. 
That is $v_\perp p v_\perp \in \s 
\big( L^{2}(\br^2) \big)$. This together
with \eqref{eq1,81} imply that
$v_\perp p \in \sd \big( L^{2}(\br^2) \big)$
with 
\begin{equation}\label{eq3,10}
\Vert v_\perp p \Vert_\sd^2 = \textup{Tr} 
\hspace{0.4mm} (v_\perp p v_\perp) =
\int_{\br^2} v_\perp^2(\xp)
\mathcal{P}_b (\xp,\xp) d\xp
\leq \frac{b_0}{2\pi} e^{2\textup{osc} 
\hspace{0.5mm} \tilde{\varphi}} 
\int_{\br^2} v_\perp^2(\xp) d\xp.
\end{equation}
Thus $k \mapsto T_{v_\perp}(k)$
has a holomorphic extension as above 
from $\bc_{1/2}^{+} \cap D(0,
\epsilon)^{\ast}$ to 
$D(0,\epsilon)^{\ast}$ with values in
$\sd \left( L^{2}(\br^{3}) \right)$.
The proof is complete.
\end{proof}

Now let us extend holomorphically the 
operator $(\chho - z)^{-1} \textup{Q}$ 
from the upper half-plane to the lower
half-plane except a semi-axis.

\begin{prop}\label{p3,2} 
Let $\gamma$ be as in Proposition 
\ref{p3,1} and $\zeta$ be defined by 
\eqref{eq1,8}.

\textup{\textbf{(i)}} The operator 
valued-function 
$$
z \longmapsto \left( (\chho - z)^{-1} 
\textup{Q} : e^{-\frac{\gamma}{2} \langle t 
\rangle} L^{2}(\br^{3}) \longrightarrow 
e^{\frac{\gamma}{2} \langle t \rangle} 
L^{2}(\br^{3}) \right)
$$ 
admits a holomorphic extension from $\bc^+$ 
to $\bc \setminus [\zeta,\infty)$.

\textup{\textbf{(ii)}} For $v_\perp(\xp) 
:= \langle \xp \rangle^{-\alpha}$ with 
$\alpha > 1$ the operator valued-function
$$
L_{v_\perp} : z \longmapsto v_\perp(\xp) 
e^{-\frac{\gamma}{2} \langle t \rangle} 
(\chho - z)^{-1} \textup{Q} 
e^{-\frac{\gamma}{2} \langle t \rangle}
$$ 
has a holomorphic extension to
$\bc \setminus [\zeta,\infty)$ with values 
in the Hilbert-Schmidt class 
$\sd \left( L^{2}(\mathbb{R}^{3}) \right)$.
\end{prop}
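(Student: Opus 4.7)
For part (i), I would start by observing that $\textup{Q}$ commutes with $\chho$: both are block-diagonal with respect to the spinor decomposition \eqref{eq1,9}, and the projection $p$ commutes with $H_1(b)$, hence $Q=(1-p)\otimes 1$ commutes with $\mathcal{H}_1=H_1(b)\otimes 1+1\otimes(-d^2/dx_3^2)$. Consequently $\chho\textup{Q}$ is self-adjoint on $\mathrm{Ran}\,\textup{Q}$, and its spectrum, computed as the tensor sum of $\sigma(H_1(b)|_{\mathrm{Ran}(1-p)})$ and $\sigma(H_2(b))$ (both contained in $[\zeta,\infty)$ by \eqref{eq1,7}) with $\sigma(-d^2/dx_3^2)=[0,\infty)$, lies inside $[\zeta,\infty)$. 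Therefore $z\mapsto(\chho-z)^{-1}\textup{Q}$ is holomorphic on $\bc\setminus[\zeta,\infty)$ as a bounded operator on $L^2(\br^3)$, and a fortiori between the two weighted spaces, because multiplication by $e^{\pm\frac{\gamma}{2}\langle t\rangle}$ embeds those spaces continuously into and out of $L^2(\br^3)$. This extension agrees on $\bc^+$ with the original resolvent, which proves (i).

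For part (ii), the first step is to verify that $\tilde V(\chho+1)^{-1}\in\sd\bigl(L^2(\br^3)\bigr)$, where I write $\tilde V:=v_\perp(\xp)\,e^{-\frac{\gamma}{2}\langle t\rangle}$. Since $\alpha>1$ gives $v_\perp\in L^2(\br^2)$ and $e^{-\frac{\gamma}{2}\langle\cdot\rangle}\in L^2(\br)$, Simon's criterion \cite[Theorem 4.1]{sim} (applied as for \eqref{eq1,20}) yields $\tilde V(-\Delta+1)^{-1}\in\sd$; the diamagnetic inequality together with the boundedness of $b$ then transfers this block-by-block to $\tilde V(\chho+1)^{-1}\in\sd$, exactly as in the derivation of \eqref{eq1,21}. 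The second step promotes this to all $z\in\bc\setminus[\zeta,\infty)$ using the resolvent identity
\begin{equation*}
(\chho-z)^{-1}\textup{Q}=(\chho+1)^{-1}\textup{Q}+(z+1)\,(\chho-z)^{-1}\textup{Q}\,(\chho+1)^{-1}\textup{Q},
\end{equation*}
which is valid by part (i) together with the commutation of $\textup{Q}$ with $\chho$. Sandwiching this identity by $\tilde V$ rewrites $L_{v_\perp}(z)$ as
\begin{equation*}
\tilde V(\chho+1)^{-1}\textup{Q}\tilde V+(z+1)\,\tilde V(\chho-z)^{-1}\textup{Q}\cdot(\chho+1)^{-1}\textup{Q}\tilde V.
\end{equation*}
Both summands then belong to $\sd$: the first because $\tilde V(\chho+1)^{-1}\in\sd$ and $\textup{Q}\tilde V$ is bounded, the second because $\tilde V(\chho-z)^{-1}\textup{Q}$ is bounded by part (i) and $(\chho+1)^{-1}\textup{Q}\tilde V=\bigl[\tilde V(\chho+1)^{-1}\textup{Q}\bigr]^\ast\in\sd$. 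Holomorphy of $z\mapsto L_{v_\perp}(z)$ with values in $\sd$ follows because the only $z$-dependent factor is norm-holomorphic in $\mathscr{L}\bigl(L^2(\br^3)\bigr)$, and composition with fixed elements of $\sd$ preserves holomorphy in the $\sd$-norm.

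The only genuinely delicate step is the diamagnetic transfer to the matrix operator $\chho$: each diagonal block equals $(-i\nabla-\textbf{A})^2\pm b$ with $b$ bounded, so after adding a sufficiently large constant the standard pointwise kernel comparison of \cite[Theorem 2.3]{avr}--\cite[Theorem 2.13]{sim} reduces the matter to the scalar Laplacian case handled by Simon's criterion. Once this transfer is in hand, the rest of the argument is routine bookkeeping with the resolvent identity and the two-sided ideal structure of $\sd$.
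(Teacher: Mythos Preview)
Your argument is essentially the same as the paper's: for (i) you both use that $\textup{Q}$ commutes with $\chho$ and that the spectrum of $\chho$ on $\mathrm{Ran}\,\textup{Q}$ lies in $[\zeta,\infty)$; for (ii) you both reduce to $\tilde V(\chho+1)^{-1}\in\sd$ via Simon's criterion plus the diamagnetic inequality, and then push this to general $z$ by a resolvent manipulation. The paper uses the factorization $\tilde V(\chho-z)^{-1}\textup{Q}=\tilde V(\chho+1)^{-1}\cdot(\chho+1)(\chho-z)^{-1}\textup{Q}$ together with the spectral-mapping bound \eqref{eq3,13}, which is just a rephrasing of your resolvent-identity step.

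One minor slip: in the definition of $L_{v_\perp}$ the weight on the \emph{right} is only $e^{-\frac{\gamma}{2}\langle t\rangle}$, not $\tilde V=v_\perp\,e^{-\frac{\gamma}{2}\langle t\rangle}$. With the form of the resolvent identity you chose, your second term becomes $(z+1)\,\tilde V(\chho-z)^{-1}\textup{Q}\cdot(\chho+1)^{-1}\textup{Q}\,e^{-\frac{\gamma}{2}\langle t\rangle}$, and $(\chho+1)^{-1}e^{-\frac{\gamma}{2}\langle t\rangle}$ is \emph{not} Hilbert--Schmidt (no decay in $\xp$), so the adjoint trick fails. The fix is trivial: either swap the order in the resolvent identity to $(\chho-z)^{-1}\textup{Q}=(\chho+1)^{-1}\textup{Q}+(z+1)(\chho+1)^{-1}\textup{Q}(\chho-z)^{-1}\textup{Q}$, so that the $\sd$-factor $\tilde V(\chho+1)^{-1}$ always sits on the left, or simply factor as the paper does and absorb the bounded right multiplier $e^{-\frac{\gamma}{2}\langle t\rangle}$ at the end.
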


\begin{proof}
\textbf{(i)} Consider $z \in \bc^{+}$. Thanks 
to \eqref{eq1,9} and \eqref{eq3,1} we have 
\begin{equation}\label{eq3,11}
(\chho - z)^{-1} \textup{Q}
= \left( \begin{smallmatrix}
   \big( \mathcal{H}_1(b) - z \big)^{-1} 
   Q & 0 \\
   0 & \big( \mathcal{H}_2(b) - z 
   \big)^{-1}
\end{smallmatrix} \right) = 
\big( \mathcal{H}_1(b) - z \big)^{-1} Q 
\oplus \big( \mathcal{H}_2(b) - z 
\big)^{-1}.
\end{equation}
Since $\bc \setminus [\zeta,\infty)$ is 
contained in the resolvent set of
$\mathcal{H}_1(b)$ acting on 
$Q Dom \big( \mathcal{H}_1(b) \big)$ and 
$\mathcal{H}_2(b)$ acting on 
$Dom \big( \mathcal{H}_2(b) \big)$ then 
$\bc \setminus [\zeta,\infty) \ni z 
\longmapsto \big( \mathcal{H}_1(b) - z 
\big)^{-1} Q \oplus \big( 
\mathcal{H}_2(b) - z \big)^{-1}$ is well 
defined and holomorphic. So $\bc^+ \ni z 
\mapsto e^{-\frac{\gamma}{2} \langle t 
\rangle} (\chho - z)^{-1} \textup{Q} 
e^{-\frac{\gamma}{2} \langle t \rangle}$ 
admits a holomorphic extension to $\bc 
\setminus [\zeta,\infty)$.

\textbf{(ii)} According to \eqref{eq3,11}
\begin{equation}\label{eq3,110}
L_{v_\perp}(z) = v_\perp e^{-\frac{\gamma}{2} 
\langle t \rangle} \left( \big( 
\mathcal{H}_1(b) - z \big)^{-1} Q \oplus 
\big( \mathcal{H}_2(b) - z \big)^{-1} \right) 
e^{-\frac{\gamma}{2} \langle t \rangle}.
\end{equation}
We have
\begin{equation}\label{eq3,12}
\left\Vert v_\perp e^{-\frac{\gamma}{2} 
\langle t \rangle} \big( \mathcal{H}_1(b) - 
z \big)^{-1} Q \right\Vert_\sd^2 
\leq  \left\Vert v_\perp e^{-\frac{\gamma}{2} 
\langle t \rangle} \big( \mathcal{H}_1(b) + 1
\big)^{-1} \right\Vert_\sd^2
\left\Vert \big( \mathcal{H}_1(b) + 1 \big)
\big( \mathcal{H}_1(b) - z \big)^{-1} Q 
\right\Vert^2.
\end{equation}
By the Spectral mapping theorem
\begin{equation}\label{eq3,13}
\left\Vert \big( \mathcal{H}_1(b) + 1 \big)
\big( \mathcal{H}_1(b) - z \big)^{-1} Q 
\right\Vert^q \leq 
\textup{sup}_{s \in [\zeta,+\infty)}^q 
\left\vert \frac{s + 1}{s - z} \right\vert.
\end{equation}
With the help of the resolvent identity, the 
boundedness of the magnetic field $b$ and 
the diamagnetic inequality \big(see 
\cite[Theorem 2.3]{avr}-\cite[Theorem 2.13]
{sim}\big) we obtain
\begin{equation}\label{eq3,14}
\begin{split}
\left\Vert v_\perp e^{-\frac{\gamma}{2} 
\langle t \rangle} \big( \mathcal{H}_1(b) 
+ 1 \big)^{-1} \right\Vert_\sd^2 
& \leq \left\Vert I + \big( \mathcal{H}_1(b)
 + 1 \big)^{-1}b \right\Vert^2
\left\Vert v_\perp e^{-\frac{\gamma}{2} 
\langle t \rangle} 
\big( (-i\nabla - \textbf{A})^{2} + 1 
\big)^{-1} \right\Vert_\sd^2 \\
& \leq C \left\Vert v_\perp 
e^{-\frac{\gamma}{2} \langle t \rangle} 
(-\Delta + 1)^{-1} \right\Vert_\sd^2.
\end{split}
\end{equation}
By the standard criterion 
\cite[Theorem 4.1]{sim}
\begin{equation}\label{eq3,15}
\left\Vert v_\perp e^{-\frac{\gamma}{2} 
\langle t \rangle} (-\Delta + 1 \vert)^{-1} 
\right\Vert_\sd^2 \leq C 
\Vert v_\perp e^{-\frac{\gamma}{2} 
\langle t \rangle} \Vert_{L^2}^2 
\left\Vert \Bigl( \vert \cdot \vert^{2} 
+ 1 \Bigr)^{-1} \right\Vert_{L^2}^2.
\end{equation}
Putting together \eqref{eq3,12}, 
\eqref{eq3,13}, \eqref{eq3,14} and
\eqref{eq3,15} we get
\begin{equation}\label{eq3,16}
\left\Vert v_\perp e^{-\frac{\gamma}{2} 
\langle t \rangle} \big( \mathcal{H}_1(b) - 
z \big)^{-1} Q \right\Vert_\sd^2
\leq C \Vert v_\perp e^{-\frac{\gamma}{2} 
\langle t \rangle} \Vert_{L^{2}}^{2} 
\textup{sup}_{s \in [\zeta,+\infty)}^2 
\left\vert \frac{s + 1}{s - z} \right\vert.
\end{equation}
By similar arguments we can prove that
\begin{equation}\label{eq3,17}
\left\Vert v_\perp e^{-\frac{\gamma}{2} 
\langle t \rangle} \big( \mathcal{H}_2(b) - 
z \big)^{-1} \right\Vert_\sd^2
\leq C \Vert v_\perp e^{-\frac{\gamma}{2} 
\langle t \rangle} \Vert_{L^{2}}^{2} 
\textup{sup}_{s \in [\zeta,+\infty)}^2 
\left\vert \frac{s + 1}{s - z} \right\vert.
\end{equation}
Since the multiplication operator by the 
function $e^{-\frac{\gamma}{2} 
\langle t \rangle}$ is bounded then 
\eqref{eq3,110}, \eqref{eq3,16} and
\eqref{eq3,17} imply that $L_{v_\perp}(z)$ 
belongs to $\sd \left( L^{2}(\br^{3}) 
\right)$ and has a holomorphic extension 
from $\bc^{+}$ to $\bc \setminus [\zeta,
\infty)$. This completes the proof.
\end{proof}

For $V$ satisfying assumption 
\eqref{eq1,13}, \eqref{eq1,191} holds. 
Then this together with \eqref{eq3,3}, 
Propositions \ref{p3,1}-\ref{p3,2} 
yield to the following

\begin{lem}\label{l3,1} 
Let $D(0,\epsilon)^{\ast}$ be the 
pointed disk defined by \eqref{eq3,6}. 
Assume that $V$ satisfies \eqref{eq1,13} 
and set $z(k) := k^2$. Then the 
operator valued-function
$$
\mathbb{C}_{1/2}^{+} \cap D(0,\epsilon)
^{\ast} \ni k \longmapsto \mathcal{T}_{V}
\big( z(k) \big) := J \vert V \vert^{1/2} 
\big( \chho - z(k) \big)^{-1} \vert V 
\vert^{1/2},
$$ 
where $J := sign(V)$ has a holomorphic
extension to $D(0,\epsilon)^{\ast}$ 
with values in $\sd \left( L^{2}(\br^{3}) 
\right)$. We will denote again this 
extension by $\mathcal{T}_{V} \big( z(k)
\big)$. Furthermore the operator 
$\partial_z \mathcal{T}_{V} \big( z(k)
\big) \in \s \left( L^{2}(\br^{3}) 
\right)$ is holomorphic on 
$D(0,\epsilon)^{\ast}$.
\end{lem}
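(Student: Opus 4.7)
My plan is to combine Propositions \ref{p3,1} and \ref{p3,2} with the factorisation \eqref{eq1,191} supplied by hypothesis \eqref{eq1,13}. Writing $|V|^{1/2}=\mathscr{V}(v_\perp\otimes w)$ with $v_\perp(\xp):=\langle\xp\rangle^{-m_\perp/2}$ and $w(t):=e^{-\gamma\langle t\rangle/2}$, the condition $m_\perp>2$ gives $\alpha:=m_\perp/2>1$, so the decay hypothesis in Propositions \ref{p3,1}(ii) and \ref{p3,2}(ii) is met. Self-adjointness of $|V|^{1/2}$ also gives $|V|^{1/2}=(v_\perp\otimes w)\mathscr{V}^*$, so, using one factorisation on the left and the other on the right,
$$\mathcal{T}_V(z(k))=J\mathscr{V}(v_\perp w)(\chho-z(k))^{-1}(v_\perp w)\mathscr{V}^*.$$

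First I insert $\textup{P}+\textup{Q}=\textup{I}$ via \eqref{eq3,3}. Since $v_\perp$ and $w$ act on disjoint variables, each of the two resulting pieces equals $T_{v_\perp}(k)v_\perp$ respectively $L_{v_\perp}(z(k))v_\perp$, giving
$$\mathcal{T}_V(z(k))=J\mathscr{V}\bigl(T_{v_\perp}(k)+L_{v_\perp}(z(k))\bigr)v_\perp\mathscr{V}^*.$$
Proposition \ref{p3,1}(ii) provides the $\sd$-valued holomorphic extension of $T_{v_\perp}$ to $D(0,\epsilon)^*$. Proposition \ref{p3,2}(ii) gives the analogous extension of $L_{v_\perp}$ to $\bc\setminus[\zeta,\infty)$; the choice $\epsilon<N_{\gamma,\zeta}\leq\sqrt{\zeta}$ ensures $|z(k)|=|k|^2<\zeta$ throughout $D(0,\epsilon)^*$, so $z(k)$ stays inside that domain. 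Multiplication by the bounded operators $\mathscr{V}$, $v_\perp$, $\mathscr{V}^*$ preserves both $\sd$ and holomorphy, yielding the first assertion.

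For the derivative I compute $\partial_z\mathcal{T}_V(z(k))=J|V|^{1/2}(\chho-z(k))^{-2}|V|^{1/2}$, apply the same factorisation and $\textup{P}/\textup{Q}$ splitting, and for the $\textup{P}$-part use the tensor structure \eqref{eq3,2} to get
$$(v_\perp w)(\chho-z(k))^{-2}\textup{P}(v_\perp w)=(v_\perp pv_\perp)\otimes\bigl(w\mathscr{R}(z(k))^2w\bigr)\begin{pmatrix}1&0\\0&0\end{pmatrix},$$
and analogously for the $\textup{Q}$-part via \eqref{eq3,11}. The transverse factor $v_\perp pv_\perp$ is trace-class by \cite[Lemma 2.3]{rage}: its trace equals $\int v_\perp^2(\xp)\mathcal{P}_b(\xp,\xp)d\xp$, finite by \eqref{eq1,81} and $m_\perp>2$. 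The longitudinal factor $w\mathscr{R}(z)^2w=\partial_z[w\mathscr{R}(z)w]$ I would handle by direct kernel analysis: its explicit kernel $w(t)\partial_z\mathscr{N}_z(t,t')w(t')$ splits into a piece proportional to $w\mathscr{R}(z)w$ (which is trace-class, easily checked for real $z<0$ by Mercer and then holomorphically continued) plus a piece with an extra factor $|t-t'|$ whose polynomial growth is absorbed by $w(t)w(t')$, yielding a factorisation as a product of two Hilbert–Schmidt operators. Since the tensor product of two trace-class operators is trace-class, $\partial_z\mathcal{T}_V(z(k))\in\s$, and holomorphy in $k$ is inherited from the explicit holomorphic $k$-dependence of the longitudinal kernel.

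The main obstacle is precisely this last trace-class upgrade. The naive identity $|V|^{1/2}(\chho-z)^{-2}|V|^{1/2}=[|V|^{1/2}(\chho-z)^{-1}][(\chho-z)^{-1}|V|^{1/2}]$ is not sufficient: after meromorphic continuation to $D(0,\epsilon)^*$ each factor carries only one-sided weights and fails to be Hilbert–Schmidt beyond the upper half-plane, whereas the proofs of Propositions \ref{p3,1}(ii) and \ref{p3,2}(ii) crucially use weights on both sides of the free resolvent. The remedy is to keep $v_\perp$ attached symmetrically around the projection $p$, placing the trace-class burden on $v_\perp pv_\perp\in\s$ and leaving only the one-dimensional kernel $w\mathscr{R}(z)^2w$ for a direct analysis exploiting the exponential decay of $w$.
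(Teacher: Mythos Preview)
The paper gives no detailed proof of this lemma; it merely records that it follows from \eqref{eq1,191}, \eqref{eq3,3} and Propositions~\ref{p3,1}--\ref{p3,2}. Your argument follows exactly that route and supplies much more detail. The $\sd$ part is correct as written.

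For the $\s$ part your strategy is right, but two points need fixing. First, the $\textup{Q}$-piece cannot be treated ``analogously via \eqref{eq3,11}'' by a tensor factorisation: \eqref{eq3,11} is a \emph{direct sum}, not a tensor product. Fortunately the $\textup{Q}$-piece is the easy one. Since $z(k)$ stays in the genuine resolvent set of the $\textup{Q}$-block for $k\in D(0,\epsilon)^\ast$, no analytic continuation is needed and the ``naive'' factorisation you discard does work here: $(v_\perp w)(\chho-z)^{-2}\textup{Q}(v_\perp w)=\bigl[(v_\perp w)(\chho-z)^{-1}\textup{Q}\bigr]\bigl[\textup{Q}(\chho-z)^{-1}(v_\perp w)\bigr]$, and each bracket lies in $\sd$ by the very estimates \eqref{eq3,12}--\eqref{eq3,17} proved in Proposition~\ref{p3,2}(ii) (note those bounds do not use the right-hand weight $e^{-\gamma\langle t\rangle/2}$ at all).

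Second, for the $\textup{P}$-piece the reduction to showing $w\mathscr{R}(z(k))^2w\in\s\bigl(L^2(\br)\bigr)$ is correct, but the justification is incomplete. ``Mercer for $z<0$ and then holomorphically continued'' does not by itself yield $\s$-valued holomorphy on $D(0,\epsilon)^\ast$: one must exhibit trace-class membership with locally bounded $\s$-norm throughout the continued region, and you have not produced the promised Hilbert--Schmidt factorisation there. One clean way to close the gap: fix $\kappa>0$ large and, for $k\in D(0,\epsilon)^\ast$, use the resolvent identity $\mathscr{R}(z)=\mathscr{R}(-\kappa^2)+(z+\kappa^2)\mathscr{R}(-\kappa^2)\mathscr{R}(z)$ together with an auxiliary weight $w_\delta:=e^{-\delta\langle\cdot\rangle}$ with $|\im k|<\delta<\gamma/2$ to write
\[
w\mathscr{R}(z)w=w\mathscr{R}(-\kappa^2)w+(z+\kappa^2)\,\bigl[w\mathscr{R}(-\kappa^2)w_\delta^{-1}\bigr]\bigl[w_\delta\mathscr{R}(z)w\bigr].
\]
The first term is trace class by Mercer; in the second term both brackets have $L^2$ kernels (this is where the choices of $\kappa$ and $\delta$ are used), so the product lies in $\s$. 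The second bracket depends holomorphically on $k$ in $\sd$, whence $k\mapsto w\mathscr{R}(z(k))w$ is $\s$-valued holomorphic on $D(0,\epsilon)^\ast$; its $k$-derivative, and therefore $w\mathscr{R}(z(k))^2w=\frac{1}{2k}\partial_k\bigl[w\mathscr{R}(z(k))w\bigr]$, then also lies in $\s$ holomorphically. Tensoring with $v_\perp p v_\perp\in\s$ finishes the $\textup{P}$-piece.
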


Now using the identity
$$
(\chh - z)^{-1} \big( 1 + V(\chho - 
z)^{-1} \big) = (\chho - z)^{-1}
$$
derived from the resolvent equation 
we obtain
\begin{align*}
e^{-\frac{\gamma}{2} \langle t \rangle} 
\big(\chh - z \big)^{-1} 
e^{-\frac{\gamma}{2} \langle t \rangle} 
& = e^{-\frac{\gamma}{2} \langle t 
\rangle} (\chho - z)^{-1} 
e^{-\frac{\gamma}{2} \langle t \rangle} 
\\ 
& \times \left( 1 + e^{\frac{\gamma}{2} 
\langle t \rangle} V(\chho - z)^{-1} 
e^{-\frac{\gamma}{2} \langle t \rangle} 
\right)^{-1}.
\end{align*}
As in \eqref{eq1,191} assumption 
\eqref{eq1,13} on $V$ implies the 
existence of
$\mathscr{M} \in \mathscr{L} \big( 
L^{2}(\br^3) \big)$ such that 
\begin{equation}\label{eq3,19}
\vert V \vert (\xp,t)
= \mathscr{M} \left( \langle \xp \rangle
^{-m_\perp} \otimes e^{-\gamma \langle t 
\rangle} \right), \quad (\xp,t) \in \br^3, 
\quad m_\perp > 2.
\end{equation} 
Then similarly to Lemma \ref{l3,1} it 
can be proved that $k \longmapsto 
e^{\frac{\gamma}{2} \langle t \rangle} 
V(\chho - z)^{-1} e^{-\frac{\gamma}{2} 
\langle t \rangle}$ is holomorphic with 
values in $\sinf \left( L^{2}(\br^{3}) 
\right)$. Thus by the analytic Fredholm 
theorem the operator valued-function 
$$
k \longmapsto \left( 1 + 
e^{\frac{\gamma}{2} \langle t \rangle} 
V(\chho - z)^{-1} e^{-\frac{\gamma}{2} 
\langle t \rangle} \right)^{-1}
$$ 
admits a meromorphic extension from 
$\mathbb{C}_{1/2}^{+} \cap 
D(0,\epsilon)^{\ast}$ to 
$D(0,\epsilon)^{\ast}$. Hence we have
the following

\begin{prop}\label{p3,3} 
Under the assumptions and the notations 
of Lemma \ref{l3,1} the operator 
valued-function 
$$
k \longmapsto \left( \big( \chh - z(k) 
\big)^{-1} : \textup{e}^{-\frac{\gamma}
{2} \langle x_{3} \rangle} L^{2}
(\mathbb{R}^{3}) \longrightarrow 
\textup{e}^{\frac{\gamma}{2} \langle 
x_{3} \rangle} L^{2}(\mathbb{R}^{3}) 
\right)
$$ 
admits a meromorphic extension from 
$\mathbb{C}_{1/2}^{+} \cap 
D(0,\epsilon)^{\ast}$ to 
$D(0,\epsilon)^{\ast}$. This extension 
will be denoted by $R \big( z(k) \big)$.
\end{prop}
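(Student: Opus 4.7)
The plan is to extract the meromorphic extension of the weighted resolvent of $\chh$ from the resolvent identity, using the already established holomorphic extensions of the weighted resolvent of $\chho$ (Propositions~\ref{p3,1} and~\ref{p3,2}) together with the analytic Fredholm theorem. First, starting from the resolvent identity $(\chh-z)^{-1} = (\chho-z)^{-1}\bigl(1 + V(\chho-z)^{-1}\bigr)^{-1}$, valid for $\im(z)$ large, I would conjugate by $e^{-\frac{\gamma}{2}\langle t\rangle}$ and insert the unit factor $e^{-\frac{\gamma}{2}\langle t\rangle}e^{\frac{\gamma}{2}\langle t\rangle}$ to obtain the factorisation
$$
e^{-\frac{\gamma}{2}\langle t\rangle}(\chh-z)^{-1}e^{-\frac{\gamma}{2}\langle t\rangle}
= \Bigl[e^{-\frac{\gamma}{2}\langle t\rangle}(\chho-z)^{-1}e^{-\frac{\gamma}{2}\langle t\rangle}\Bigr]\Bigl[1 + e^{\frac{\gamma}{2}\langle t\rangle}V(\chho-z)^{-1}e^{-\frac{\gamma}{2}\langle t\rangle}\Bigr]^{-1}
$$
already displayed above. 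Setting $z = z(k) = k^2$, I would then treat each of the two factors on the right separately.

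For the first factor, I would combine the decomposition \eqref{eq3,3} with Propositions~\ref{p3,1} and~\ref{p3,2}: applied with a bounded weight $v_\perp \equiv 1$ (or just combining the $\mathrm{P}$ part and the $\mathrm{Q}$ part pointwise in $z$), they jointly provide a holomorphic extension of the map $k \mapsto e^{-\frac{\gamma}{2}\langle t\rangle}(\chho - z(k))^{-1}e^{-\frac{\gamma}{2}\langle t\rangle}$ from $\bc_{1/2}^+ \cap D(0,\epsilon)^\ast$ to $D(0,\epsilon)^\ast$ as a bounded operator on $L^2(\br^3)$; note that for this it suffices that $\epsilon < N_{\gamma,\zeta}$, which is exactly \eqref{eq3,7}.

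For the second factor, I would factorise $V = J|V|^{1/2}\cdot|V|^{1/2}$ and insert a factor $e^{\gamma\langle t\rangle}e^{-\gamma\langle t\rangle}$ to write
$$
e^{\frac{\gamma}{2}\langle t\rangle}V(\chho-z(k))^{-1}e^{-\frac{\gamma}{2}\langle t\rangle}
= \bigl(e^{\frac{\gamma}{2}\langle t\rangle}J|V|^{1/2}\bigr)\cdot\mathcal{T}_V(z(k))\cdot\bigl(e^{\frac{\gamma}{2}\langle t\rangle}|V|^{1/2}(\chho-z(k))^{-1}e^{-\frac{\gamma}{2}\langle t\rangle}\bigr),
$$
or more simply, I would mimic the proof of Lemma~\ref{l3,1} verbatim with the weight $|V|^{1/2}$ on the left replaced by $e^{\frac{\gamma}{2}\langle t\rangle}V$ and $|V|^{1/2}$ on the right kept as is. The key input is \eqref{eq1,13} (via \eqref{eq3,19}), which guarantees that $e^{\frac{\gamma}{2}\langle t\rangle}|V|^{1/2}$ is dominated by $\langle \xp \rangle^{-m_\perp/2}e^{-\frac{\gamma}{2}\langle t\rangle}$, times a bounded factor; composing with the Hilbert--Schmidt valued holomorphic extension supplied by Propositions~\ref{p3,1}~(ii) and~\ref{p3,2}~(ii) yields that $k \mapsto e^{\frac{\gamma}{2}\langle t\rangle}V(\chho-z(k))^{-1}e^{-\frac{\gamma}{2}\langle t\rangle}$ is holomorphic on $D(0,\epsilon)^\ast$ with values in $\sinf(L^2(\br^3))$.

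The analytic Fredholm theorem then applies: on the connected open set $D(0,\epsilon)^\ast$ the operator $I + e^{\frac{\gamma}{2}\langle t\rangle}V(\chho-z(k))^{-1}e^{-\frac{\gamma}{2}\langle t\rangle}$ is a holomorphic compact perturbation of the identity which is invertible on the nonempty open subset $\bc_{1/2}^+\cap D(0,\epsilon)^\ast$ (where it equals $e^{\frac{\gamma}{2}\langle t\rangle}(\chh-z)(\chho-z)^{-1}e^{-\frac{\gamma}{2}\langle t\rangle}$ up to conjugation, and is genuinely invertible because $\im(z) \neq 0$). Hence its inverse extends meromorphically to $D(0,\epsilon)^\ast$, and composing with the holomorphic first factor yields the desired meromorphic extension $R(z(k))$. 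The main obstacle is the second bullet: showing that the ``middle'' factor remains compact after one side of the conjugation has been dressed with the \emph{growing} weight $e^{\frac{\gamma}{2}\langle t\rangle}$. This is precisely the point at which the exponential decay of $V$ in the $x_3$-direction imposed in \eqref{eq1,13} is used, and it is the reason why $\epsilon$ must be chosen strictly smaller than $\frac{\gamma}{2}$ (hence than $N_{\gamma,\zeta}$).
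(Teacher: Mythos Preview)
Your approach is essentially the same as the paper's: factorise via the resolvent identity, extend the free weighted resolvent holomorphically using Propositions~\ref{p3,1}--\ref{p3,2}, show that $k\mapsto e^{\frac{\gamma}{2}\langle t\rangle}V(\chho-z(k))^{-1}e^{-\frac{\gamma}{2}\langle t\rangle}$ is compact-valued holomorphic on $D(0,\epsilon)^\ast$ via \eqref{eq3,19}, and conclude by the analytic Fredholm theorem. One minor slip: $e^{\frac{\gamma}{2}\langle t\rangle}|V|^{1/2}$ is dominated only by $C\langle\xp\rangle^{-m_\perp/2}$, with \emph{no} residual exponential decay in $t$ (since $|V|^{1/2}$ itself decays only like $e^{-\frac{\gamma}{2}\langle t\rangle}$); but boundedness of this left factor is all you need, because the remaining factor $|V|^{1/2}(\chho-z(k))^{-1}e^{-\frac{\gamma}{2}\langle t\rangle}$ is already Hilbert--Schmidt-valued holomorphic by \eqref{eq1,191} combined with Propositions~\ref{p3,1}(ii) and~\ref{p3,2}(ii).
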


We can now define the resonances of 
$\chh$ near zero. In the following 
definition the index of a 
finite-meromorphic operator 
valued-function appearing 
in \eqref{eq3,20} is recalled in the 
Appendix.

\begin{fe}\label{d3,1} 
We define the resonances of $H$ 
near zero as the poles of the 
meromorphic extension $R(z)$ of 
$\left( \chh - z \right)^{-1}$ in 
$\mathscr{L} \left( 
\textup{e}^{-\frac{\gamma}{2} 
\langle x_{3} \rangle} L^{2}(\br^3),
\textup{e}^{\delta \langle x_{3} 
\rangle} L^{2}(\br^3) \right)$. The 
multiplicity of a resonance $z_0 
:= z(k_0)$ is defined by
\begin{equation}\label{eq3,20}
\textup{mult}(z_0) := 
\textup{Ind}_{\mathcal{C}} 
\hspace{0.5mm} \Big( I + 
\mathcal{T}_{V} \big( z(\cdot) \big) 
\Big),
\end{equation}
$\mathcal{C}$ being a small 
contour positively oriented 
containing $k_0$ as the unique 
point $k \in D(0,\epsilon)^\ast$
satisfying $z(k)$ is a resonance
of $\chh$, and $\mathcal{T}_{V} \big( 
z(\cdot) \big)$ being defined by 
Lemma \ref{l3,1}.
\end{fe}

\section{Results on the resonances}\label{s4}

We establish preliminary 
results on the resonances we need 
for the proofs of our main results.

\subsection{A characterisation of the resonances}


\begin{prop}\label{p4,1} 
Let $\mathcal{T}_{V} (\cdot)$ be 
defined by Lemma \ref{l3,1}. Then the 
following assertions are equivalent:

$\textup{\textbf{(i)}}$ $z _0 := 
z(k_{0})$ is a resonance of $\chh$ 
near zero,

$\textup{\textbf{(ii)}}$ $-1$ is an 
eigenvalue of $\mathcal{T}_{V} \big( 
z(k_{0}) \big)$,

$\textup{\textbf{(iii)}}$ 
$\textup{det}_2 \big( I + 
\mathcal{T}_{V} \big( z(k_{0}) \big) 
\big) = 0$.
\\
Moreover the multiplicity of $z_0$ 
as zero of $\textup{det}_2 \big( I + 
\mathcal{T}_{V} (\cdot) \big)$ 
coincides with its multiplicity 
\eqref{eq3,20} as resonance of $\chh$.
\end{prop}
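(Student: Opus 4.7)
My plan is to establish the three equivalences in succession and then treat the multiplicity claim. The equivalence $\textup{(ii)} \Leftrightarrow \textup{(iii)}$ is essentially tautological: by Lemma \ref{l3,1} the operator $\mathcal{T}_V(z(k_0))$ is Hilbert-Schmidt and hence compact, and the definition \eqref{eq1,15} of $\det_2$ ensures that $\det_2\big(I + \mathcal{T}_V(z(k_0))\big) = 0$ precisely when $-1$ belongs to its (discrete, nonzero) spectrum, that is, when $-1$ is an eigenvalue.

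For $\textup{(i)} \Leftrightarrow \textup{(ii)}$ I would apply a Birman-Schwinger-type argument. The resolvent identity gives formally $(\chh - z)^{-1} = (\chho - z)^{-1}\big(I + V(\chho - z)^{-1}\big)^{-1}$; sandwiching both sides by the weights $e^{-\frac{\gamma}{2}\langle x_3\rangle}$ and combining Propositions \ref{p3,1}--\ref{p3,2}, one sees that the meromorphic extension $R(z)$ from Proposition \ref{p3,3} has a pole at $z_0 = z(k_0)$ if and only if $I + V(\chho - z(k_0))^{-1}$ fails to be invertible on the appropriate weighted space. Factoring $V = J\vert V\vert^{1/2} \cdot \vert V\vert^{1/2}$ and invoking the standard fact that $\sigma(AB)\setminus\{0\} = \sigma(BA)\setminus\{0\}$ converts this non-invertibility into $-1 \in \sigma\big(\mathcal{T}_V(z(k_0))\big)$. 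The analytic Fredholm theorem together with the Hilbert-Schmidt property supplied by Lemma \ref{l3,1} legitimises each step.

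For the coincidence of multiplicities, I would invoke the classical identity recalled in the appendix Section \ref{sa}: for a finite-meromorphic $\sd$-valued function the index
$$
\textup{Ind}_\mathcal{C}\big(I + \mathcal{T}_V(z(\cdot))\big) = \frac{1}{2\pi i}\oint_\mathcal{C} \frac{d}{dk}\ln \det_2\big(I + \mathcal{T}_V(z(k))\big)\, dk
$$
along a small contour $\mathcal{C}$ encircling a single zero $k_0$ equals the order of the zero of $k \mapsto \det_2\big(I + \mathcal{T}_V(z(k))\big)$ at $k_0$. Combined with Definition \ref{d3,1} this yields the asserted equality of multiplicities. The main obstacle I anticipate lies in the Birman-Schwinger step: the exponential weights $e^{\pm\frac{\gamma}{2}\langle x_3\rangle}$ and the splitting provided by $\textup{P}$ and $\textup{Q}$ must be handled carefully so that the formal resolvent identity is justified in the correct meromorphic sense on $D(0,\epsilon)^\ast$, well beyond its initial domain of validity $\bc_{1/2}^+$.
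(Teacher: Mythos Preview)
Your proposal is correct and follows essentially the same route as the paper. For $\textup{(i)}\Leftrightarrow\textup{(ii)}$ the paper condenses your Birman--Schwinger argument into the single algebraic identity
\[
\bigl(I+\mathcal{T}_V(z)\bigr)\bigl(I-J\vert V\vert^{1/2}(\chh-z)^{-1}\vert V\vert^{1/2}\bigr)=I,
\]
which extends meromorphically and thereby sidesteps the weighted-space concern you flag; for the multiplicity the paper invokes the $\det_2$--index identity from \cite[(2.6)]{bo} rather than the appendix (which records only the trace-class version \eqref{eqa,4}).
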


\begin{proof}
The equivalence \textbf{(i)} 
$\Leftrightarrow$ \textbf{(ii)} 
follows immediately from 
\begin{equation}\label{eq3,21}
\left( I + J \vert V \vert^{1/2} 
(\chho - z)^{-1} \vert V 
\vert^{1/2} \right) \left( I - J 
\vert V \vert^{1/2} (\chh - z)^{-1} 
\vert V \vert^{1/2} \right) = I.
\end{equation}

The equivalence \textbf{(ii)} 
$\Leftrightarrow$ \textbf{(iii)} is 
a direct consequence of the 
definition of $\textup{det}_2 
\bigl( I + \mathcal{T}_{V} \big( 
z(k_{0}) \big) \bigr)$ given by 
\eqref{eq1,16} with $q = 2$.

Otherwise since by Lemma \ref{l3,1}
$\mathcal{T}_{V} (\cdot)$ is 
holomorphic on 
$D(0,\epsilon)^{\ast}$ then so is 
$\textup{det}_2 \bigl( I + 
\mathcal{T}_{V} (\cdot) 
\big)$ on $D(0,\epsilon)^\ast$. Let 
$\textup{m}(z_0)$ be the multiplicity 
of $z_0$ as zero of
$\textup{det}_2 \big( I + 
\mathcal{T}_{V} (\cdot) 
\big)$. If $\mathcal{C}'$ is a 
small contour positively oriented 
containing $z_0$ as the unique 
resonance of $\chh$ near zero then
\begin{equation}\label{eq3,22}
\textup{m}(z_0) = ind_{\mathcal{C}'} 
\Big( \textup{det}_2 \bigl( I + 
\mathcal{T}_{V} (\cdot) 
\bigr) \Big),
\end{equation}
where the RHS of \eqref{eq3,22} is 
the index defined by \eqref{eqa,5} 
of the holomorphic function 
$\textup{det}_2 \bigl( I + 
\mathcal{T}_{V} (\cdot) 
\bigr)$ with respect 
to the contour $\mathcal{C}'$. Now 
the equality on the multiplicities 
claimed in the proposition is an 
immediate consequence of the 
equality
$$
ind_{\mathcal{C}'} 
\Big( \textup{det}_2 \bigl( I + 
\mathcal{T}_{V} (\cdot) 
\bigr) \Big) = Ind_{\mathcal{C}} 
\hspace{0.5mm} \Big( I + 
\mathcal{T}_{V} \big( z(\cdot) 
\big) \Big),
$$
see for instance \cite[(2.6)]{bo}. 
This concludes the proof.
\end{proof}

\subsection{Decomposition of the weighted resolvent}

We split the weighted resolvent
$\mathcal{T}_{V} \big( z(k) \big) :=  
J \vert V \vert^{\frac{1}{2}} 
\big( \chho - z(k) \big)^{-1} \vert V 
\vert^{\frac{1}{2}}$ into a singular 
part near $k = 0$ and a holomorphic 
part on the open disk $D(0,\epsilon) 
:= D(0,\epsilon)^\ast \cup \lbrace 0 
\rbrace$ with values in $\sd 
\big( L^{2}(\br^3) \big)$.

According to \eqref{eq3,3} for 
$k \in D(0,\epsilon)^\ast$
\begin{equation}\label{eq4,1}
\mathcal{T}_{V} \big( z(k) \big) = 
J \vert V \vert^{\frac{1}{2}} p 
\otimes \mathscr{R} \big( z(k) \big)
\begin{pmatrix}
1 & 0 \\
0 & 0
\end{pmatrix} \vert V 
\vert^{\frac{1}{2}} + J \vert V 
\vert^{\frac{1}{2}} \big( \chho - 
z(k) \big)^{-1} \textup{Q} \vert V 
\vert^{\frac{1}{2}}.
\end{equation}
Recall that $e_\pm$ are the 
multiplications operators by the 
functions $e^{\pm\frac{\gamma}{2} 
\langle \cdot \rangle}$ 
respectively. We have
\begin{equation}\label{eq4,2}
J \vert V \vert^{\frac{1}{2}} 
p \otimes \mathscr{R} \big( z(k) 
\big)
\begin{pmatrix}
1 & 0 \\
0 & 0
\end{pmatrix} \vert V 
\vert^{\frac{1}{2}}
= J \vert V \vert^{\frac{1}{2}} 
e_+ p \otimes e_- \mathscr{R} 
\big( z(k) \big) e_-
\begin{pmatrix}
1 & 0 \\
0 & 0
\end{pmatrix} e_+ \vert V 
\vert^{\frac{1}{2}}.
\end{equation}
Thanks to \eqref{eq3,4} the 
integral kernel of 
$e_- \mathscr{R} \big( z(k) \big) 
e_-$ is given by 
\begin{equation}\label{eq4,3}
e^{-\frac{\gamma}{2} \langle t 
\rangle} \frac{i \textup{e}^{i k 
\vert t - t' \vert}}{2 k}
e^{-\frac{\gamma}{2} \langle t' 
\rangle}
\end{equation}
for $k \in D(0,\epsilon)^\ast$. 
Then $e_- \mathscr{R} 
\big( z(k) \big) e_-$ can be 
decompose as
\begin{equation}\label{eq4,4}
e_- \mathscr{R} \big( z(k) \big) e_- 
= \frac{1}{k}a + b(k),
\end{equation}
where $a : L^{2}(\mathbb{R}) 
\longrightarrow L^{2}(\mathbb{R})$ 
is the rank-one operator defined by
\begin{equation}\label{eq4,5}
a(u) := \frac{i}{2} \big\langle u,
e^{-\frac{\gamma}{2} \langle \cdot 
\rangle} \big\rangle 
e^{-\frac{\gamma}{2} \langle \cdot 
\rangle}
\end{equation}
and $b(k)$ is the operator with 
integral kernel given by
\begin{equation}\label{eq4,6}
e^{-\frac{\gamma}{2} \langle t 
\rangle} i \frac{ \textup{e}^{i k 
\vert t - t' \vert} - 1}{2 k} 
e^{-\frac{\gamma}{2} \langle t 
\rangle}.
\end{equation}
It is easy to remark that $-2ia = 
c^\ast c$ where $c$ is the operator
defined by \eqref{eqc1}.
This together with \eqref{eq4,4} 
yield for $k \in D(0,\epsilon)^\ast$ 
to
\begin{equation}\label{eq4,7}
p \otimes e_- \mathscr{R} \big( z(k) 
\big) e_- = \pm \frac{i}{2k} p 
\otimes c^\ast c + p \otimes s(k),
\end{equation}
where $s(k)$ is the operator acting 
from $e^{-\frac{\gamma}{2} \langle t
\rangle} L^{2}(\br)$ to 
$e^{\frac{\gamma}{2} \langle t 
\rangle} L^{2}(\br)$ having the 
integral kernel
\begin{equation}\label{eq4,8}
\frac{ 1 - \textup{e}^{i k 
\vert t - t' \vert}}{2 i k}.
\end{equation}
By combining \eqref{eq4,2} with
\eqref{eq4,7} we get for $k \in 
D(0,\epsilon)^\ast$
\begin{equation}\label{eq4,9}
\begin{split}
& J
\vert V \vert^{\frac{1}{2}} 
p \otimes \mathscr{R} \big( z(k) 
\big)
\begin{pmatrix}
1 & 0 \\
0 & 0
\end{pmatrix} \vert V 
\vert^{\frac{1}{2}} \\
& = \frac{iJ}{2k} \vert V 
\vert^{\frac{1}{2}} 
e_+ (p \otimes c^\ast c) 
\begin{pmatrix}
1 & 0 \\
0 & 0
\end{pmatrix} e_+ \vert V 
\vert^{\frac{1}{2}} 
+ J \vert V \vert^{\frac{1}{2}} 
e_+ p \otimes s(k) 
\begin{pmatrix}
1 & 0 \\
0 & 0
\end{pmatrix}  e_+ 
\vert V \vert^{\frac{1}{2}}.
\end{split}
\end{equation}
That is
\begin{equation}\label{eq4,10}
J \vert V \vert^{\frac{1}{2}} 
p \otimes \mathscr{R} \big( z(k) \big)
\begin{pmatrix}
1 & 0 \\
0 & 0
\end{pmatrix} \vert V 
\vert^{\frac{1}{2}} = 
\frac{iJ}{k} K^\ast K + J
\vert V \vert^{\frac{1}{2}} e_+ p 
\otimes s(k) 
\begin{pmatrix}
1 & 0 \\
0 & 0
\end{pmatrix}
e_+ \vert V \vert^{\frac{1}{2}},
\end{equation}
where $K$ is the operator defined 
by \eqref{eq2,4}. We have then proved 
the following

\begin{prop}\label{p4,2} 
Let $V$ satisfy assumptions 
\eqref{eq1,12}-\eqref{eq1,13}. 
For $k \in D(0,\epsilon)^\ast$
\begin{equation}\label{eq4,11}
\mathcal{T}_{V} \big( z(k) \big) = 
\frac{iJ}{k} \mathscr{B}
+ \mathscr{A}(k), \quad \mathscr{B} 
:= K^\ast K,
\end{equation}
the operator $\mathscr{A}(k) \in 
\sd \big( L^{2}(\br^3) \big)$ being 
given by
\begin{equation}\label{eq4,111}
\mathscr{A}(k) := 
J \vert V \vert^{\frac{1}{2}} e_+ p 
\otimes s(k) 
\begin{pmatrix}
1 & 0 \\
0 & 0
\end{pmatrix}
e_+ \vert V \vert^{\frac{1}{2}}
 + J \vert V \vert^{\frac{1}{2}} 
\big( \chho - z(k) \big)^{-1} 
\textup{Q} \vert V \vert^{\frac{1}{2}}
\end{equation}
and holomorphic on the open disk 
$D(0,\epsilon)$ with $s(k)$ defined 
by \eqref{eq4,7}.
\end{prop}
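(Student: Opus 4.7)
The plan is to assemble the decomposition directly from the preceding identities in the section, and then verify that the ``regular'' remainder has the claimed Hilbert--Schmidt and holomorphy properties on the full disk $D(0,\epsilon)$.

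First I would use identity \eqref{eq3,3} applied after the conjugation by $J|V|^{1/2}$ on both sides to write
\begin{equation*}
\mathcal{T}_V\bigl(z(k)\bigr) = J |V|^{1/2}\, p\otimes \mathscr{R}\bigl(z(k)\bigr) \begin{pmatrix} 1 & 0 \\ 0 & 0 \end{pmatrix} |V|^{1/2} + J |V|^{1/2} \bigl(\chho - z(k)\bigr)^{-1}\textup{Q}\,|V|^{1/2},
\end{equation*}
which is precisely \eqref{eq4,1}. For the $\textup{Q}$-term I would invoke Proposition \ref{p3,2}(ii): since \eqref{eq1,191} exhibits $|V|^{1/2}$ as the product of $\langle\xp\rangle^{-m_\perp/2}\otimes e^{-\gamma\langle t\rangle/2}$ with a bounded operator, and since $m_\perp>2$ and $\epsilon<\sqrt{\zeta}$ by \eqref{eq3,7}, the operator $J |V|^{1/2}(\chho-z(k))^{-1}\textup{Q}\,|V|^{1/2}$ is holomorphic on all of $\bc\setminus[\zeta,\infty)\supset D(0,\epsilon)$ with values in $\sd(L^2(\br^3))$.

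Next, for the $\textup{P}$-term, I would insert the identity $e_-e_+=I$ on both sides of $\mathscr{R}(z(k))$, producing \eqref{eq4,2} with the inner factor $e_-\mathscr{R}(z(k))e_-$. Using the explicit kernel \eqref{eq3,4} for $\mathscr{R}(z(k))$ I would decompose this inner operator as $\frac{1}{k}a + b(k)$ with $a$ the rank-one operator \eqref{eq4,5} and $b(k)$ the operator with kernel \eqref{eq4,6}. The algebraic identity $-2ia=c^\ast c$, with $c$ defined by \eqref{eqc1}, then converts the singular part into
\begin{equation*}
\frac{iJ}{2k}\,|V|^{1/2}e_+\bigl(p\otimes c^\ast c\bigr)\begin{pmatrix} 1 & 0 \\ 0 & 0 \end{pmatrix} e_+ |V|^{1/2} = \frac{iJ}{k}\,K^\ast K,
\end{equation*}
by direct comparison with definition \eqref{eq2,4} of $K$ (so $K^\ast K = \tfrac{1}{2}|V|^{1/2}e_+(p\otimes c^\ast c)\begin{pmatrix}1&0\\0&0\end{pmatrix}e_+|V|^{1/2}$). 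This yields the singular term $\frac{iJ}{k}\mathscr{B}$ of \eqref{eq4,11}.

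The main point that remains is to show that the first summand of $\mathscr{A}(k)$ in \eqref{eq4,111}, namely $J|V|^{1/2}e_+(p\otimes s(k))\begin{pmatrix}1&0\\0&0\end{pmatrix}e_+|V|^{1/2}$, belongs to $\sd(L^2(\br^3))$ and is holomorphic on the \emph{full} disk $D(0,\epsilon)$ (i.e.\ across $k=0$). This is the only non-routine step, and here the main obstacle is holomorphy at the origin. I would argue it as follows: rewriting $(1-e^{ik|t-t'|})/(2ik) = -|t-t'|\cdot\varphi(k|t-t'|)$ with $\varphi(w):=(1-e^{iw})/(iw)$ entire on $\bc$, the kernel \eqref{eq4,8} of $s(k)$ is jointly entire in $k$ and continuous in $(t,t')$, with a bound $|k|^{-1}(1+e^{|\im k|\langle t\rangle}e^{|\im k|\langle t'\rangle})$ that is sharp enough to make $e_+|V|^{1/2}\cdot s(k)\cdot e_+|V|^{1/2}$ Hilbert--Schmidt (the cancellation between $e_+$ and the $e^{-\gamma\langle t\rangle/2}$ weight inside $|V|^{1/2}$ reduces matters to a weight of the form $\langle\xp\rangle^{-m_\perp/2}$ on the perpendicular variables and bounded integrable $t$-decay). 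Combined with the estimate $\|pv_\perp\|_{\sd}^2\leq\frac{b_0}{2\pi}e^{2\,\mathrm{osc}\,\tilde\varphi}\|v_\perp\|_{L^2}^2$ from \eqref{eq3,10} applied with $v_\perp=\langle\xp\rangle^{-m_\perp/2}$, this gives a Hilbert--Schmidt bound locally uniform in $k\in D(0,\epsilon)$. Holomorphy then follows either from Morera applied to the entire kernel in $k$, or from the uniform Hilbert--Schmidt convergence of the power series of $\varphi$ inside the operator norm. Putting the two pieces together one obtains $\mathscr{A}(k)\in \sd(L^2(\br^3))$ holomorphic on $D(0,\epsilon)$, which completes the decomposition \eqref{eq4,11}.
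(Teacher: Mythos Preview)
Your proof is correct and follows exactly the paper's route: the paper's argument is precisely the chain of identities \eqref{eq4,1}--\eqref{eq4,10} preceding the proposition, and you reproduce it while supplying detail (the entire-function treatment of $s(k)$ and the explicit appeal to Proposition~\ref{p3,2}(ii) for the $\textup{Q}$-term) that the paper leaves implicit. One small wrinkle: the bound $|k|^{-1}(1+e^{|\im k|\langle t\rangle}e^{|\im k|\langle t'\rangle})$ you quote for the kernel of $s(k)$ is not locally uniform at $k=0$; the bound you actually need near $k=0$ comes from your own observation that the kernel equals $-\tfrac{1}{2}|t-t'|\,\varphi(k|t-t'|)$ with $\varphi$ entire, which gives $|s(k)(t,t')|\le C\,|t-t'|\,e^{\epsilon|t-t'|}$ uniformly on $D(0,\epsilon)$ and makes both the Hilbert--Schmidt estimate and the Morera/power-series holomorphy argument go through.
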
 

\begin{rem}
$-$

For any $r > 0$ we have
\begin{equation}\label{eq4,12}
\textup{Tr} \hspace{0.4mm} 
\one_{(r,\infty)} 
\left( K^\ast K \right) 
= \textup{Tr} \hspace{0.4mm} 
\one_{(r,\infty)} 
\left( K K^\ast \right)
= \textup{Tr} \hspace{0.4mm} 
\one_{(r,\infty)} 
\big( p \textbf{\textup{W}} p 
\big)
\end{equation}
following \eqref{eq2,5}.
\end{rem}

Note that the asymptotic 
expansion of the quantity 
$\textup{Tr} \hspace{0.4mm} 
\one_{(r,\infty)} 
\big( p U p \big)$ is well known 
once the function 
$0 \leq U \in L^\infty (\br^2)$ 
decays like a power, exponentially 
or is compactly supported:

\textbf{(A1)} If $U \in C^{1} 
\big( \mathbb{R}^{2} \big)$ satisfies
$U(\xp) = u_{0}\big(\xp / \vert \xp 
\vert\big) \vert \xp \vert^{-m} 
( 1 + o(1) \big)$, $\vert \xp \vert 
\rightarrow \infty$, $0 \not\equiv 
u_{0} \in C^0 \big( \mathbb{S}^{1},
\br_+ \big)$, $\vert \nabla U(\xp) 
\vert \leq C_{1} \langle \xp 
\rangle^{-m-1}$ with $m$, $C_{1} > 0$ 
constant and if there exists an 
integrated density of states for the 
operator $H_{1}(b)$ then
\begin{equation}\label{eq02,2}
\textup{Tr} \hspace{0.4mm} 
\one_{(r,\infty)} \big( p U p \big) 
= C_{m} r^{-2/m} \big( 1 + o(1) \big), 
\hspace{0.2cm} r \searrow 0,
\end{equation}
where $C_{m} := \frac{b_0}{4\pi} 
\int_{\mathbb{S}^{1}} u_{0}(t)^{2/m} 
dt$, \big(see \cite[Lemma 3.3]{rage}\big).

\textbf{(A2)} If $U$ 
satisfies $\ln U(\xp) = -\mu \vert 
\xp \vert^{2\beta} \big( 1 + o(1) 
\big)$, $\vert \xp \vert \rightarrow 
\infty$ with $\beta$, $\mu > 0$ 
constant then
\begin{equation}\label{eq02,3}
\textup{Tr} \hspace{0.4mm} 
\one_{(r,\infty)} \big( p U p \big) 
= \varphi_{\beta}(r) \big( 1 + o(1) 
\big), \hspace{0.2cm} r \searrow 0,
\end{equation}
where for $0 < r < \textup{e}^{-1}$
$$
\varphi_{\beta}(r) :=
 \begin{cases}
 \frac{1}{2} b_0 \mu^{-1/\beta} \vert 
 \ln r \vert^{1/\beta} & \text{if } 
 0 < \beta < 1,\\
 \frac{1}{\ln(1 + 2\mu/b_0)} \vert 
 \ln r \vert & \text{if } \beta = 1,\\
 \frac{\beta}{\beta - 1} \big{(} \ln 
 \vert \ln r \vert \big{)}^{-1} \vert 
 \ln r \vert & \text{if } \beta > 1,
 \end{cases}
$$
\big(see \cite[Lemma 3.4]{rage}\big).

\textbf{(A3)} If $U$ 
is compactly supported and if there 
exists $C > 0$ constant such that
on an open non-empty subset of 
$\mathbb{R}^{2}$ $U \geq C$ then
\begin{equation}\label{eq02,4}
\textup{Tr} \hspace{0.4mm} 
\one_{(r,\infty)} \big( p U p \big) 
= \varphi_{\infty}(r) \big( 1 + o(1) 
\big), \hspace{0.2cm} r \searrow 0,
\end{equation}
where 
$\varphi_{\infty}(r) := \big( \ln 
\vert \ln r \vert \big{)}^{-1} 
\vert \ln r \vert, \hspace{0.2cm} 
0 < r < \textup{e}^{-1}$,
\big(see \cite[Lemma 3.5]{rage}\big).

By an evident adaptation of 
\cite[Proof of Corollary 1]{bon} 
we obtain the following corollary 
summarizing useful properties of the 
operator $\mathscr{B}$ defined by 
\eqref{eq4,11}. Therefore we omit 
the proof.

\begin{cor}\label{c4,1}
Let $V$ satisfy assumptions 
\eqref{eq1,12}-\eqref{eq1,13}. Then 
$\mathscr{B} \in \s \big( L^2(\br^3) 
\big)$ and satisfies for $r > 0$ 
small enough 
\begin{equation}
\textup{Tr} \hspace{0.4mm} 
\one_{(r,\infty)} (\mathscr{B}) 
= \mathcal{O} \big( r^{-2/m_\perp} 
\big).
\end{equation}
For $j \in 
\bn^\ast$ the operator-valued 
functions
\begin{equation}\label{eq4,13}
\bc \setminus \big( \mp i[0,+\infty[ 
\big) \ni k \mapsto \mathfrak{B}(k) =
\mathfrak{B}_j^\pm(k) := 
\frac{i\mathscr{B}}{k} \left( I \pm 
\frac{i\mathscr{B}}{k} \right)^{-j} 
\in \s \big( L^2(\br^3) \big)
\end{equation}
are holomorphic and 
\begin{equation}\label{eq4,14}
\Vert \mathfrak{B}(k) \Vert_\spp^p 
\leq f(\theta)^{pj} \sigma_p \big( 
\vert k \vert \big), \qquad p = 1, 
\hspace*{0.1cm} 2,
\end{equation}
where $\theta = \textup{Arg} 
\hspace{1mm} k$, $f(\theta) = \big(
1 - (\sin \theta)_- 
\big)^{-\frac{1}{2}}$ with $s_- := 
\max(-s,0)$ for $s \in \br$ and
\begin{equation}\label{eq4,15}
\sigma_p (r) := 
\left\Vert \frac{\mathscr{B}}{r} 
\left( I + \frac{\mathscr{B}^2}{r^2} 
\right)^{-1/2} \right\Vert_\spp^p = 
\mathcal{O} \big( r^{-2/m_\perp} 
\big), \quad r > 0.
\end{equation}

Further for any $r > 0$ and $p 
\geq 1$
\begin{equation}\label{eq4,16}
2^{-p/2} \Tilde{n}_p (r) \leq 
\sigma_p (r) \leq \Tilde{n}_p (r)
+ \textup{Tr} \hspace{0.4mm} 
\one_{(r,\infty)} (\mathscr{B})
\end{equation}
with
\begin{equation}\label{eq4,17}
\Tilde{n}_p (r) :=
\left\Vert \frac{\mathscr{B}}{r}
\one_{[0,r]}(\mathscr{B}) 
\right\Vert_\spp^p.
\end{equation}

Moreover if the function 
$\textbf{\textup{W}}$ defined by
\eqref{eq2,2} satisfies assumption
\textbf{(A1)} with $m > 2$ then
for $p = 1, 2$ there exists 
constants $C_{m,p}$ and 
$\Tilde{C}_{m,p}$ such that
\begin{equation}\label{eq4,18}
\begin{cases}
\sigma_p (r) = C_{m,p} r^{-2/m} 
\big( 1 + o(1) \big), \\
\Tilde{n}_p (r) = \Tilde{C}_{m,p}
r^{-2/m} 
\big( 1 + o(1) \big),
\end{cases}
\quad r \searrow 0.
\end{equation}
Finally if $\textbf{\textup{W}}$ 
satisfies Assumptions \textbf{(A2)}
then
\begin{equation}\label{eq4,19}
\sigma_p (r) = \varphi_\beta (r) 
\big( 1 + o(1) \big), \quad 
\Tilde{n}_p (r) = o
\big( \varphi_\beta (r) \big), 
\quad r \searrow 0,
\end{equation}
where the functions 
$\varphi_\beta (r)$, $\beta 
\in (0,\infty]$ are defined by 
\eqref{eq02,3} or \eqref{eq02,4}.
\end{cor}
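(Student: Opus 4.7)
The plan is to reduce essentially every claim about $\mathscr{B}=K^{\ast}K$ to a statement about the Toeplitz operator $p\textbf{W}p$ via \eqref{eq2,5}, and then to combine elementary spectral calculus with the eigenvalue asymptotics recalled in assumptions \textbf{(A1)}--\textbf{(A3)}. My first step is to observe that $K^{\ast}K$ and $KK^{\ast}$ share the same non-zero spectrum, so by \eqref{eq2,5} the non-zero eigenvalues of $\mathscr{B}$ coincide with those of $\tfrac{1}{2}p\textbf{\textup{W}}p$. Since $\textbf{W}$ inherits from \eqref{eq2,3} the pointwise bound $0\leq \textbf{W}(\xp)\leq C\langle \xp\rangle^{-m_\perp}$ with $m_\perp>2$, \cite[Lemma 2.3]{rage} gives $p\textbf{W}p\in\s(L^{2}(\br^{2}))$ and hence $\mathscr{B}\in\s(L^{2}(\br^{3}))$. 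The counting bound $\textup{Tr}\,\one_{(r,\infty)}(\mathscr{B})=\mathcal{O}(r^{-2/m_\perp})$ is then an immediate application of the asymptotic \eqref{eq02,2} (i.e.\ \cite[Lemma 3.3]{rage}) to $\textbf{W}$, whose decay matches the hypothesis of \textbf{(A1)} with $m=m_\perp$.

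Next I would establish the holomorphy and the $\spp$-norm bound \eqref{eq4,14}. The operator $\mathscr{B}$ being non-negative self-adjoint, the spectral theorem shows that $I\pm i\mathscr{B}/k$ is invertible precisely when $-k\notin \pm i\sigma(\mathscr{B})\subset\pm i[0,+\infty)$; hence $\mathfrak{B}_{j}^{\pm}(k)$ is a holomorphic $\sinf$-valued function on $\bc\setminus (\mp i[0,+\infty))$. Writing $k=|k|e^{i\theta}$, for each eigenvalue $\lambda\in\sigma(\mathscr{B})$ one computes
\begin{equation*}
\bigl|1\pm i\lambda/k\bigr|^{2}=1\pm 2\lambda\sin\theta/|k|+\lambda^{2}/|k|^{2},
\end{equation*}
and a short case analysis on the sign of $\sin\theta$ (using $(1-\lambda/|k|)^{2}\geq 0$) yields the key geometric inequality $\bigl|1\pm i\lambda/k\bigr|^{2}\geq f(\theta)^{-2}(1+\lambda^{2}/|k|^{2})$. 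Consequently each singular value of $\mathfrak{B}_{j}^{\pm}(k)$ is dominated by $f(\theta)^{j}\,\lambda/|k|\,(1+\lambda^{2}/|k|^{2})^{-j/2}$, and since $j\geq 1$ one may further bound the denominator by its $j=1$ counterpart. Summing the $p$-th powers of these singular values, which by functional calculus equal the $p$-th powers of the eigenvalues of $\mathscr{B}/|k|\,(I+\mathscr{B}^{2}/|k|^{2})^{-1/2}$, produces \eqref{eq4,14} with the definition \eqref{eq4,15} of $\sigma_{p}(|k|)$ on the right-hand side.

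For the two-sided comparison \eqref{eq4,16} I would exploit the scalar inequalities $\phi(t):=t/\sqrt{1+t^{2}}\leq t$ on $[0,1]$ (in fact $\phi(t)\geq t/\sqrt{2}$ there) and $\phi(t)\leq 1$ on $(1,\infty)$. Splitting the spectral sum for $\sigma_{p}(r)$ at the threshold $r$ against $\mathscr{B}$, the lower piece is controlled by $\Tilde{n}_{p}(r)$ and the upper piece by $\textup{Tr}\,\one_{(r,\infty)}(\mathscr{B})$, giving the two inequalities. The $\mathcal{O}(r^{-2/m_\perp})$ bound on $\sigma_p$ is then automatic from the first part combined with \eqref{eq4,16}. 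For the refined asymptotics \eqref{eq4,18}--\eqref{eq4,19} I would invoke \eqref{eq02,2}--\eqref{eq02,4} to get the leading behaviour of the counting function of $p\textbf{W}p$, then translate it into the asymptotics of $\sigma_{p}(r)$ and $\Tilde{n}_{p}(r)$ by an Abel-summation argument on the function $\phi$, exactly as in \cite[Proof of Corollary 1]{bon}.

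The main obstacle is not the functional-calculus bound, which is routine, but rather the passage from the eigenvalue counting asymptotics of $p\textbf{W}p$ to the precise asymptotics \eqref{eq4,18}--\eqref{eq4,19} of the moment-type quantities $\sigma_{p}$ and $\Tilde{n}_{p}$; one has to verify that in each of the regimes $\textbf{(A1)}$, $\textbf{(A2)}$, $\textbf{(A3)}$ the leading order survives integration against $\phi(\lambda/r)^{p}$ and that the error terms remain subleading. This is where the hypothesis $m_\perp>2$ (so that the series defining $\sigma_{p}$ converges and the Toeplitz operator is trace-class) plays its essential role and why invoking \cite[Proof of Corollary 1]{bon} streamlines the argument.
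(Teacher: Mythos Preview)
Your proposal is correct and mirrors the paper's own treatment: the paper does not prove this corollary at all but simply states that it follows ``by an evident adaptation of \cite[Proof of Corollary 1]{bon}'', and you have written out exactly that adaptation, invoking the same reference for the passage from eigenvalue-counting asymptotics to the moment quantities $\sigma_p$ and $\Tilde{n}_p$.

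One minor caveat on your phrasing: for the bound $\textup{Tr}\,\one_{(r,\infty)}(\mathscr{B})=\mathcal{O}(r^{-2/m_\perp})$ under the bare assumption \eqref{eq1,13}, the function $\textbf{W}$ need not itself satisfy \textbf{(A1)} (no $C^1$ regularity or precise leading term is assumed there), so \eqref{eq02,2} does not apply to it directly. The bound follows instead by the operator inequality $p\textbf{W}p\leq C\,p\langle\xp\rangle^{-m_\perp}p$, monotonicity of the counting function, and then \eqref{eq02,2} applied to the majorant $\langle\xp\rangle^{-m_\perp}$, which does satisfy \textbf{(A1)}.
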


\subsection{Upper bounds on the number of resonances}\label{ss4,1}

The next result concerns an upper 
bound on the number of resonances near 
zero outside a vicinity of $\big\lbrace 
z(k) : k \in -iJ[0,+\infty) \big\rbrace$ 
for potentials $V$ of definite sign 
$J = \pm$. 

\begin{theo}\label{t4,1}
Assume that $V$ satisfying assumptions 
\eqref{eq1,12}-\eqref{eq1,13} is 
of definite sign $J$. Let 
$\mathcal{C}_{\delta}(J)$ be the sector 
defined by 
\begin{equation}
\mathcal{C}_\delta(J) := \big\lbrace 
k \in \bc : - \delta J \im(k) \leq
\vert \re(k) \vert \big\rbrace.
\end{equation}
Then for any $\delta > 0$
there exists $r_0 > 0$ such that for 
any $0 < r <r_0 $
\begin{equation}\label{eq4,20}
\displaystyle \sum_{\substack{z(k) 
\hspace{0.5mm} \in \hspace{0.5mm} 
\textup{Res}(\chh) \\ k 
\hspace{0.5mm} \in \hspace{0.5mm} 
\lbrace r < \vert k \vert < 2r 
\rbrace \cap \mathcal{C}_\delta(J)}} 
\textup{mult} \big( z(k) \big) = 
\mathcal{O} \big( \vert \ln r \vert 
\big).
\end{equation}
\end{theo}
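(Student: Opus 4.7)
The plan is to reduce the resonance count to the counting of zeros of a holomorphic regularized determinant, to isolate the singular part at $k=0$ that reflects the half–bound–state behavior, and then to apply a Jensen-type inequality in what remains.

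By Proposition \ref{p4,1}, the resonances of $\chh$ in $D(0,\epsilon)^{\ast}$ correspond, with multiplicity, to the zeros of the holomorphic function $k\mapsto\textup{det}_{2}(I+\mathcal{T}_{V}(z(k)))$. Using the decomposition of Proposition \ref{p4,2} together with the definite sign assumption on $V$ (so $\mathscr{B}=K^{\ast}K\geq 0$), I would factor
\[
I+\mathcal{T}_{V}(z(k))=\Bigl(I+\tfrac{iJ}{k}\mathscr{B}\Bigr)\bigl(I+T(k)\bigr),\qquad T(k):=\Bigl(I+\tfrac{iJ}{k}\mathscr{B}\Bigr)^{-1}\mathscr{A}(k).
\]
The spectrum of $\tfrac{iJ}{k}\mathscr{B}$ lies on the ray issued from $0$ through $iJ/k$, so $-1$ belongs to it only when $k\in -iJ[0,\infty)$. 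In $\mathcal{C}_{\delta}(J)$ the variable $k$ remains at angular distance controlled by $\delta$ from that ray, and a direct spectral computation yields $\|(I+\tfrac{iJ}{k}\mathscr{B})^{-1}\|_{\infty}\leq C(\delta)$. Combining this with the multiplicative identity $\textup{det}_{2}((I+A)(I+B))=\textup{det}_{2}(I+A)\,\textup{det}_{2}(I+B)\,e^{-\textup{Tr}(AB)}$, which applies because $\tfrac{iJ}{k}\mathscr{B}\in\s$ by Corollary \ref{c4,1}, one sees that in $\mathcal{C}_{\delta}(J)\cap D(0,\epsilon)^{\ast}$ the resonances coincide, with multiplicity, with the zeros of the single holomorphic function $F(k):=\textup{det}_{2}(I+T(k))$, the remaining factors being non-vanishing there.

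Next I would establish the uniform upper bound $|F(k)|\leq\exp(\tfrac{1}{2}\|T(k)\|_{\sd}^{2})\leq M(\delta)$ on $\mathcal{C}_{\delta}(J)\cap D(0,\epsilon/2)$ by combining $\|T(k)\|_{\sd}\leq C(\delta)\|\mathscr{A}(k)\|_{\sd}$ with the $\sd$-holomorphy of $\mathscr{A}$ from Proposition \ref{p4,2}. I would then cover the annular sector $\{r<|k|<2r\}\cap\mathcal{C}_{\delta}(J)$ by $\mathcal{O}(1)$ disks $D(k_{i},cr)$ whose enlargements $D(k_{i},2cr)$ still lie in $\mathcal{C}_{\delta}(J)\cap D(0,\epsilon/2)$. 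Jensen's inequality in each such disk bounds the number of zeros of $F$ there by $C\bigl[\log M(\delta)-\log|F(k_{i})|\bigr]$, and summing over the $\mathcal{O}(1)$ disks translates back, via the first step, into the desired resonance count.

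The main obstacle is the production of a sufficient lower bound on $|F(k_{i})|$: while the upper bound is essentially immediate from the Schatten estimates, $F$ may vanish anywhere in the sector, so the centers $k_{i}$ must be chosen off its zero set. My plan is to apply a Cartan-type minimum-modulus lemma to $F$ on a slightly larger reference disk, yielding points $k_{i}$ at which $|F(k_{i})|\geq r^{N_{\ast}}$ with $N_{\ast}$ bounded a priori by the (finite) number of zeros of $F$ in a fixed reference disk $D(0,\epsilon/4)$. This gives $C\bigl[\log M+N_{\ast}|\ln r|\bigr]=\mathcal{O}(|\ln r|)$ zeros in each covering disk, hence the theorem. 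Quantifying this trade-off between the number of zeros in the reference disk and the lower bound attained at a generic point is the delicate technical step.
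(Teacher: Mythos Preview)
Your overall strategy matches the paper's: factor $I+\mathcal{T}_{V}(z(k))$ through $I+\tfrac{iJ}{k}\mathscr{B}$, reduce to counting zeros of $F(k)=\det_{2}(I+T(k))$ (the paper uses $A(k)=\mathscr{A}(k)(I+\tfrac{iJ}{k}\mathscr{B})^{-1}$ on the other side, which is equivalent), obtain the uniform upper bound $|F|\leq M(\delta)$ from $\|T(k)\|_{\sd}=\mathcal{O}(1)$, and finish with a Jensen inequality. The genuine gap is your lower bound. You invoke a Cartan lemma with input $N_{\ast}$ equal to the ``(finite) number of zeros of $F$ in a fixed reference disk $D(0,\epsilon/4)$'', but $F$ is holomorphic only on $\mathcal{C}_{\delta}(J)\cap D(0,\epsilon)^{\ast}$: the factor $(I+\tfrac{iJ}{k}\mathscr{B})^{-1}$ has poles at the points $k=-iJ\beta_{n}$ which accumulate at $0$, and nothing you have written rules out accumulation of zeros of $F$ at $k=0$ within the sector. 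Declaring $N_{\ast}$ finite is unjustified and would in fact be stronger than the theorem you are proving; moreover any Cartan-type argument still needs a lower bound for $|F|$ at \emph{some} concrete point, which you have not supplied.

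The paper obtains that lower bound directly from the self-adjointness of $\chh$, and this is the idea you are missing. By \eqref{eq3,21} one has $(I+\mathcal{T}_{V}(z))^{-1}=I-J|V|^{1/2}(\chh-z)^{-1}|V|^{1/2}$, so for any $k$ with $\im(k^{2})>\varsigma>0$ the self-adjoint resolvent estimate gives $\|(I+\mathcal{T}_{V}(z(k)))^{-1}\|=\mathcal{O}(\varsigma^{-1})$; pushed through the factorization this yields $\|(I+A(k))^{-1}\|=\mathcal{O}(s^{-1}\varsigma^{-1})$ for $|k|>s$. Since $\|A(k)\|_{\sd}=\mathcal{O}(1)$, only $\mathcal{O}(1)$ eigenvalues $\mu_{j}$ of $A(k)$ satisfy $|\mu_{j}|>\tfrac12$, and bounding the corresponding factors $(1+\mu_{j})^{-1}$ by $\mathcal{O}(s^{-1}\varsigma^{-1})$ gives $|\det_{2}(I+A(k))|\geq C\exp\bigl(-C(|\ln s|+|\ln\varsigma|)\bigr)$. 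Choosing $k_{0}$ in $\{r<|k|<2r\}\cap\mathcal{C}_{\delta}(J)$ with $\im(k_{0}^{2})$ a fixed positive multiple of $r^{2}$ then provides the lower bound of order $e^{-C|\ln r|}$ that the Jensen Lemma~\ref{la,1} requires. Replace your Cartan step by this self-adjointness argument.
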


\begin{proof}
Thanks to Proposition \ref{p4,2} for 
$k \in D(0,\epsilon)^\ast$ 
\begin{equation}\label{eq4,21}
\mathcal{T}_{V} \big( z(k) \big) = 
\frac{iJ}{k} 
\mathscr{B} + \mathscr{A}(k),
\end{equation}
where $\mathscr{B}$ is a self-adjoint 
positive operator which does not 
depend on $k$ while $\mathscr{A}(k) 
\in \sd \big( L^{2}(\br^3) \big)$ is 
holomorphic near $k = 0$. Since
$
I + \frac{iJ}{k} \mathscr{B} = 
\frac{iJ}{k} (\mathscr{B} - iJk)
$ then $I + \frac{iJ}{k}\mathscr{B}$ 
is invertible for
$iJk \notin \sigma (\mathscr{B})$ 
and satisfies
\begin{equation}\label{eq4,22}
\small{\left\Vert \left( I + 
\frac{iJ}{k} \mathscr{B} \right)^{-1} 
\right\Vert \leq \frac{\vert k \vert}
{\sqrt{\big( J\im(k) \big)_+^2 
+ \vert \re(k) \vert^2}}}, 
\qquad r_+ := \max (r,0).
\end{equation}
Further it is easy to check that 
for $k \in \mathcal{C}_\delta(J)$ we 
have uniformly with respect to $\vert 
k \vert < r_0 \leq \epsilon$ 
\begin{equation}\label{eq4,23}
\small{\left\Vert \left( I + 
\frac{iJ}{k} \mathscr{B} 
\right)^{-1} \right\Vert \leq 
\sqrt{1 + \delta^{-2}}}.
\end{equation}
Then using \eqref{eq4,21} we can
write
\begin{equation}\label{eq4,24}
\small{I + \mathcal{T}_{V} 
\big( z(k) \big) = \big( I + A(k) 
\big) \left( I + \frac{iJ}{k} 
\mathscr{B} \right)},
\end{equation}
where $A(k)$ is given by 
\begin{equation}\label{eq4,25}
\small{A(k) := \mathscr{A} (k)
\left( I + \frac{iJ}{k} \mathscr{B} 
\right)^{-1}} \in \sd \big( L^{2}
(\br^3) \big).
\end{equation}
Otherwise a simple computation 
allows to obtain
$$
\mathcal{T}_{V} \big( z(k) \big) 
- A(k) = \big( I + A(k) \big) 
\frac{iJ}{k} \mathscr{B} \in 
\mathcal{S}_1 \big( L^{2}(\br^3) 
\big)
$$
since $\mathscr{B} \in 
\mathcal{S}_1 \big( L^{2}(\br^3) 
\big)$ by Corollary \ref{c4,1}. 
Then if we approximate $A(k)$ 
by a finite rank-operator in 
\eqref{eq4,24} and use the formula 
$\textup{det}_2 (I + T) = 
\textup{det} (I + T) 
e^{-\textup{Tr}(T)}$ for $T \in 
\mathcal{S}_1$ we obtain
\begin{equation}\label{eq4,26}
\small{\textup{det}_2 \big( I + 
\mathcal{T}_{V} \big( z(k) \big) 
\big)  = \textup{det} \left( I + 
\frac{iJ}{k} \mathscr{B} \right) 
\times \textup{det}_2 \big( I + 
A(k) \big) e^{-\textup{Tr} \left( 
\mathcal{T}_{V} \big( z(k) \big) - 
A(k) \right)}}.
\end{equation}
Then for $\vert k \vert < r_0$ 
such that $k \in 
\mathcal{C}_\delta(J)$ the zeros 
of $\textup{det}_2 \big( I + 
\mathcal{T}_{V} \big( z(k) \big) 
\big)$ are those of 
$\textup{det}_2 \big( I + A(k) 
\big)$ with the same multiplicities 
thanks to Proposition \ref{p4,1} 
and Property \eqref{eqa,3} applied 
to \eqref{eq4,24}.

Estimate \eqref{eq4,23} and the 
fact that $\mathscr{A}(k)$ is 
holomorphic near $k = 0$ with 
values in $\sd \big( L^{2}(\br^3) 
\big)$ imply that the 
Hilbert-Schmidt norm of $A(k)$ is
uniformly bounded with respect 
to $\vert k \vert < r_0$ small 
enough and $k \in 
\mathcal{C}_\delta(J)$. So we
obtain uniformly with respect 
to $k$
\begin{equation}\label{eq4,27}
\small{\textup{det}_2 \big( I 
+ A(k) \big) = \mathcal{O} 
\left( e^{\mathcal{O} \big( 
\Vert A(k) 
\Vert_{\mathcal{S}_2}^2 \big)} 
\right) = \mathcal{O}(1).}
\end{equation}

In what follows below we prove
a corresponding lower bound of 
\eqref{eq4,27}. Identity 
\eqref{eq4,24} implies that
\begin{equation}\label{eq4,28}
\small{ \big( I + A(k) 
\big)^{-1} = \left( I + 
\frac{iJ}{k} \mathscr{B} \right)
\Big( I + \mathcal{T}_{V} \big( 
z(k) \big) \Big)^{-1}}.
\end{equation}
With the help of \eqref{eq3,21} 
we get for $\im(k^2) > \varsigma 
> 0$
\begin{equation}\label{eq4,281}
\begin{split}
\left\Vert \Big( I + 
\mathcal{T}_{V} \big( z(k) \big) 
\Big)^{-1} \right\Vert & = 
\mathcal{O} \Big( 1 + \left\Vert 
\vert V \vert^{1/2} \big( \chh - 
z(k) \big)^{-1} \vert V 
\vert^{1/2} \right\Vert \Big) \\
& = \mathcal{O} \Big( 1 + 
\big\vert \im(k^2) \big\vert^{-1} 
\Big) = \mathcal{O} \left( 
\varsigma^{-1} \right).
\end{split}
\end{equation}
This together with \eqref{eq4,28} 
yield to
\begin{equation}\label{eq4,29}
\small{\left\Vert \big( I + A(k) 
\big)^{-1} \right\Vert
= \mathcal{O} \big( s^{-1} \big)
\mathcal{O} \big( \varsigma^{-1} 
\big)},
\end{equation}
uniformly with respect to $(k,s)$ 
such that $0 < s < \vert k \vert 
< r_0$ and $\im(k^2) > \varsigma 
> 0$. Let $(\mu_j)_j$ be the 
sequence of eigenvalues of $A(k)$. 
We have
\begin{equation}\label{eq4,30}
\begin{aligned}
\small{\left\vert \Big( 
\textup{det}_2 (I + A(k)) 
\Big)^{-1} \right\vert}
& \small{= \left\vert 
\textup{det} \big( (I + 
A(k))^{-1} e^{A(k)} \big) 
\right\vert} \\
& \small{\leq \prod_{\vert \mu_j 
\vert \leq \frac{1}{2}} \left\vert 
\frac{e^{\mu_j}}{1 + \mu_j} 
\right\vert \times \prod_{\vert 
\mu_j \vert > \frac{1}{2}} 
\frac{e^{\vert \mu_j \vert}}
{\vert 1 + \mu_j \vert}.}
\end{aligned}
\end{equation}
Using the uniform bound $\Vert A(k) 
\Vert_{\mathcal{S}_2} = 
\mathcal{O}(1)$ with respect to 
$\vert k \vert < r_0$ small enough 
and $k \in \mathcal{C}_\delta(J)$
we can prove that the first product
is uniformly bounded. On the other
hand thanks to \eqref{eq4,29} we 
have uniformly with respect to 
$(k,s)$, $0 < s < \vert k \vert 
< r_0$ and $\im(k^2) > \varsigma 
> 0$
\begin{equation}\label{eq4,31}
\small{\vert 1 + \mu_j \vert^{-1}
= \mathcal{O} \big( s^{-1} \big)
\mathcal{O} \big( \varsigma^{-1} 
\big)}.
\end{equation}
Therefore using the fact that the 
second product has a finite number 
of terms $\mu_j$ we deduce from
\eqref{eq4,31} that
\begin{equation}\label{eq4,32}
\small{\left\vert \textup{det}_2 
\big( I + A(k) \big) \right\vert 
\geq C e^{-C \big( \vert \ln 
\varsigma \vert + \vert \ln s 
\vert  \big)},}
\end{equation}
for some $C > 0$ constant. 
To conclude the proof we need the
following Jensen type lemma 
\big(see for instance 
\cite[Lemma 6]{bon}\big):

\begin{lem}\label{la,1}
Let $\Delta$ be a simply connected 
sub-domain of $\mathbb{C}$ and let 
$g$ be a holomorphic function in 
$\Delta$ with continuous extension 
to $\overline{\Delta}$. Assume there 
exists $\lambda_{0} \in \Delta$ such 
that $g(\lambda_{0}) \neq 0$ and 
$g(\lambda) \neq 0$ for $\lambda\in 
\partial \Delta$ the boundary of 
$\Delta$. Let $\lambda_{1}, 
\lambda_{2}, \ldots, \lambda_{N} \in 
\Delta$ be the zeros of $g$ repeated 
according to their multiplicity. Then 
for any domain $\Delta' \Subset \Delta$ 
there exists $C' > 0$ such that 
$N(\Delta',g)$ the number of zeros 
$\lambda_{j}$ of $g$ contained in 
$\Delta'$ satisfies
\begin{equation}\label{eqa,5}
N(\Delta',g) \leq C' \left( 
\int_{\partial \Delta} 
\textup{ln} \vert g(\lambda) \vert 
d\lambda - \textup{ln} \vert 
g(\lambda_{0}) \vert  \right).
\end{equation}
\end{lem}

Consider the domain $\Delta := 
\big\lbrace k \in 
D(0,\epsilon)^\ast : r < \vert k 
\vert < 2r \big\rbrace \cap 
\mathcal{C}_\delta(J)$ with some 
$\im(k_0^2) > \varsigma > 0$, 
$k_0 \in \Delta$. Then
Theorem \ref{t4,1} follows 
immediately by applying the 
Jensen Lemma \ref{la,1} to the 
function $D(\cdot) := \textup{det}_2 
\big( I + A(\cdot) \big)$ on $\Delta$ 
together with Proposition 
\ref{p4,1}, estimates 
\eqref{eq4,27}-\eqref{eq4,32}. 
The proof is complete.
\end{proof}

For general perturbations $V$ 
without sign restriction we have
the following result:

\begin{theo}\label{t4,2}
{\cite[Theorem 2.1]{diom}} 

Let $V$ satisfy assumptions 
\eqref{eq1,12}-\eqref{eq1,13}. 
Then there exists $0 < r_{0} < 
\epsilon$ small enough such 
that for any $0 < r < r_{0}$
\begin{equation}\label{eq4,321}
\displaystyle \sum_{\substack{z(k) 
\hspace{0.5mm} \in \hspace{0.5mm} 
\textup{Res}(\chh) \\ k 
\hspace{0.5mm} \in \hspace{0.5mm} 
\lbrace r < \vert k \vert < 2r 
\rbrace}} \textup{mult} \big( z(k) 
\big) = \mathcal{O} \Big( \textup{Tr} 
\hspace{0.4mm} \one_{(r,\infty)} 
\big( p \textbf{\textup{W}} 
p \big) \vert \ln r \vert \Big).
\end{equation}
\end{theo}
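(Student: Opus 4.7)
The plan is to count the zeros (with multiplicities) of the holomorphic function
\[
D(k) := \textup{det}_{2}\bigl( I + \mathcal{T}_{V}(z(k)) \bigr), \qquad k \in D(0,\epsilon)^{\ast},
\]
on the annulus $\lbrace r < \vert k \vert < 2r \rbrace$, identifying them via Proposition \ref{p4,1} with the resonances counted in \eqref{eq4,321}. As in Theorem \ref{t4,1} this will be carried out by applying the Jensen-type Lemma \ref{la,1} to $D$ on a simply connected domain $\Delta$ slightly larger than the annulus, with a reference point $k_{0} \in \Delta$ chosen so that $\im\bigl( z(k_{0}) \bigr) \geq \varsigma > 0$. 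The task reduces to establishing
\begin{equation}\label{eq:ub-lb-plan}
\sup_{k \in \partial \Delta} \ln \vert D(k) \vert \leq C\, N(r)\, \vert \ln r \vert \quad \text{and} \quad -\ln \vert D(k_{0}) \vert \leq C'\, N(r)\, \vert \ln r \vert,
\end{equation}
where $N(r) := \textup{Tr}\hspace{0.4mm}\one_{(r,\infty)}(\mathscr{B}) = \textup{Tr}\hspace{0.4mm}\one_{(r,\infty)}(p \textbf{\textup{W}} p)$ by \eqref{eq4,12}.

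The lower bound at $k_{0}$ is obtained exactly as in \eqref{eq4,281}--\eqref{eq4,32}: from the resolvent identity \eqref{eq3,21} and the estimate $\Vert (\chh - z(k_{0}))^{-1} \Vert \leq \varsigma^{-1}$, the eigenvalues $\mu_{j}$ of $\mathcal{T}_{V}(z(k_{0}))$ satisfy $\vert 1 + \mu_{j} \vert^{-1} = O(\varsigma^{-1})$. Only the eigenvalues lying outside the disk $\lbrace \vert \mu \vert \leq 1/2 \rbrace$ contribute nontrivially to the product expression \eqref{eq4,30}; their number is controlled by $O(N(r))$ via Weyl's inequality applied to the decomposition $\mathcal{T}_{V}(z(k)) = \frac{iJ}{k}\mathscr{B} + \mathscr{A}(k)$ of Proposition \ref{p4,2} together with Corollary \ref{c4,1}.

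The real difficulty lies in the upper bound, where the factorization scheme \eqref{eq4,22}--\eqref{eq4,26} of Theorem \ref{t4,1} breaks down: without a scalar sign $J = \pm$, the operator $I + \frac{iJ}{k}\mathscr{B}$ need no longer be invertible in a sector, and its inverse cannot be uniformly controlled. Instead I would isolate the singular part of $\mathcal{T}_{V}(z(k))$ via the finite-rank spectral projection $\Pi_{r} := \one_{(r,\infty)}(\mathscr{B})$, which has rank $N(r)$, writing
\[
I + \mathcal{T}_{V}(z(k)) = \bigl( I + G(k) \bigr) \bigl( I + F(k) \bigr),
\]
where $F(k)$ gathers the finite-rank "principal" piece $\frac{iJ}{k}\Pi_{r}\mathscr{B}\Pi_{r}$ and $G(k)$ gathers the "tail" contributions. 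On the range of $I - \Pi_{r}$ one has $\Vert (I - \Pi_{r}) \mathscr{B} \Vert \leq r$, so after a Grushin/Feshbach reduction that reorganises the cross terms, $G(k)$ can be arranged to be uniformly bounded in $\sd$ norm on $\partial \Delta$, while $\Vert F(k) \Vert = O(r^{-1})$. The standard estimates $\vert \textup{det}_{2}(I + G) \vert \leq \exp\bigl( C \Vert G \Vert_{\sd}^{2} \bigr)$ and $\vert \textup{det}(I + F) \vert \leq (1 + \Vert F \Vert)^{N(r)}$ then yield $\vert D(k) \vert \leq \exp\bigl( O(N(r) \vert \ln r \vert) \bigr)$, and Lemma \ref{la,1} concludes.

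The hardest point will be performing the block reduction cleanly so that the regularising $e^{-\textup{Tr}(F)}$ factor occurring when one converts $\textup{det}_{2}$ to $\textup{det}$ on the finite-rank piece does not produce a spurious exponential blow-up of order $e^{O(1/r)}$ on $\partial \Delta$; this forces a careful choice of the block structure and the use of the explicit trace identities on the range of $\Pi_{r}$, together with the fact that the off-diagonal blocks of $\mathscr{B}$ in the $\Pi_{r}$-$(I - \Pi_{r})$ decomposition are controlled by $\sqrt{r \Vert \mathscr{B} \Vert}$ via the positivity of $\mathscr{B}$.
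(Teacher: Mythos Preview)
The paper does not prove Theorem \ref{t4,2}; it is quoted from \cite{diom}. However, the essential construction is reproduced in this paper inside the proof of Proposition \ref{p5,1} (equations \eqref{eq5,10}--\eqref{eq5,14}), so your proposal can be compared against that.

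Your plan is close but misses one simplifying idea, and the gap is precisely the one you flag at the end as ``the hardest point''. In the paper's argument one does \emph{not} try to bound $\textup{det}_2\bigl(I+\mathcal{T}_V(z(k))\bigr)$ directly, nor does one perform a Grushin reduction. Instead one decomposes \emph{both} factors of \eqref{eq4,11}: the holomorphic part $\mathscr{A}(k)$ is split as $\mathscr{A}_0 + \tilde{\mathscr{A}}(k)$ with $\mathscr{A}_0$ a fixed finite-rank operator and $\Vert\tilde{\mathscr{A}}(k)\Vert < \tfrac14$ (possible since $\mathscr{A}$ is holomorphic at $0$ with values in $\sd$), and $\mathscr{B}$ is split by the spectral cutoff $\one_{]\frac{s}{2},\infty[}(\mathscr{B})$ as you suggest. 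The combination
\[
\frac{iJ}{k}\mathscr{B}\one_{[0,\frac{s}{2}]}(\mathscr{B}) + \tilde{\mathscr{A}}(k)
\]
then has \emph{operator norm} strictly less than $1$ on $\{s < \vert k\vert < s_0\}$ (not merely bounded $\sd$ norm), so its unit shift is invertible with bounded inverse. This yields the clean factorization \eqref{eq5,11}: $I+\mathcal{T}_V = (I+\mathscr{K})(I+\text{small norm})$, where $\mathscr{K}$ is finite rank of rank $O\bigl(\textup{Tr}\,\one_{(s,\infty)}(p\textbf{W}p)+1\bigr)$ and norm $O(s^{-1})$. One then works with the \emph{ordinary} determinant $\mathscr{D}(k,s)=\det(I+\mathscr{K})$, whose zeros coincide with those of $\textup{det}_2(I+\mathcal{T}_V)$; the bounds \eqref{eq5,13}--\eqref{eq5,14} and Lemma \ref{la,1} finish the proof.

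The point is that by absorbing a fixed finite-rank piece of $\mathscr{A}$ into $\mathscr{K}$, the invertible factor has small \emph{norm}, so no regularising trace correction $e^{-\textup{Tr}(F)}$ ever appears. Your scheme, which leaves all of $\mathscr{A}(k)$ in the ``tail'' $G(k)$, only controls $\Vert G(k)\Vert_{\sd}$ and therefore cannot guarantee invertibility of $I+G(k)$; this is what forces you into the $\textup{det}_2$-to-$\det$ conversion and the attendant $e^{O(1/r)}$ danger. The extra splitting of $\mathscr{A}$ is the missing idea.
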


\section{Proof of Theorem $\ref{t2,1}$: Breit-Wigner approximation}\label{s5}


We recall that
$N_{\gamma,\zeta}$ is the constant
defined by \eqref{eq2,51}.

\subsection{Preliminary results}

\begin{lem}\label{l5,1}
Let $V$ satisfy assumptions 
\eqref{eq1,12}-\eqref{eq1,13}
and $\mathcal{T}_V(\cdot)$ be the
operator defined by Lemma 
\eqref{l3,1}. Then on 
$]-N_{\gamma,\zeta}^2,
N_{\gamma,\zeta}^2[ \setminus 
\lbrace 0 \rbrace$
\begin{equation}\label{eq5,1}
\xi' = \xi_2' + \frac{1}{\pi} \im
\textup{Tr} \hspace{0.4mm} \big(
\partial_z \mathcal{T}_V(\cdot) 
\big).
\end{equation}
\end{lem}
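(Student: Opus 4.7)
The approach is to compute $\xi' - \xi_2'$ as a distribution using \eqref{eq1,23} and \eqref{eq1,25}, and then identify it via the Helffer--Sj\"ostrand functional calculus together with a Stokes-type boundary-value argument, exploiting the holomorphic extension of $\mathcal{T}_V$ through the real interval guaranteed by Lemma \ref{l3,1}. Subtracting the two distributional identities gives
$$
(\xi' - \xi_2')(f) = -\textup{Tr}\left( \tfrac{d}{d\varepsilon}f(\chho + \varepsilon V)\big\vert_{\varepsilon = 0} \right),\qquad f \in C_0^\infty(\br),
$$
and the Helffer--Sj\"ostrand formula expresses the right-hand side (up to an overall sign fixed by the chosen convention) as a constant multiple of
$$
\frac{1}{\pi}\int \partial_{\bar z}\tilde f(z)\,(\chho - z)^{-1} V (\chho - z)^{-1}\, L(dz),
$$
where $\tilde f$ is an almost-analytic extension of $f$.

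Interchanging trace and integral (justified by Bochner-integrability in $\s$-norm, itself secured by the infinite-order vanishing of $\partial_{\bar z}\tilde f$ on $\br$ combined with the Hilbert--Schmidt estimate \eqref{eq1,21}) and then applying cyclicity,
$$
\textup{Tr}\big((\chho - z)^{-1}V(\chho - z)^{-1}\big) = \textup{Tr}\big(V(\chho - z)^{-2}\big) = \textup{Tr}\big(\partial_z \mathcal{T}_V(z)\big),
$$
the last equality relying on the pointwise identity $\vert V\vert^{1/2}(\textbf{x})\,J(\textbf{x})\,\vert V\vert^{1/2}(\textbf{x}) = V(\textbf{x})$ (valid because $J(\textbf{x})$ and $\vert V(\textbf{x})\vert^{1/2}$ both arise from the spectral resolution of the hermitian matrix $V(\textbf{x})$, hence commute) together with the trace-class property of $\partial_z \mathcal{T}_V(z) = J\vert V\vert^{1/2}(\chho - z)^{-2}\vert V\vert^{1/2}$ provided by Lemma \ref{l3,1}.

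Set $g(z) := \textup{Tr}\big(\partial_z\mathcal{T}_V(z)\big)$. This function is holomorphic on $\bc \setminus [0,+\infty)$ and satisfies $g(\bar z) = \overline{g(z)}$ by self-adjointness of $\chho$ and $V$. For $f$ supported in $]-N_{\gamma,\zeta}^2, N_{\gamma,\zeta}^2[ \setminus \lbrace 0\rbrace$, one chooses $\tilde f$ supported in a small complex neighbourhood of $\textup{supp}\,f$; Green's formula applied on $\bc^\pm$ separately (truncated to $\lbrace \vert \textup{Im}\,z\vert > \varepsilon\rbrace$ and passing to the limit $\varepsilon \to 0^+$) produces the boundary-value reduction
$$
\frac{1}{\pi}\int \partial_{\bar z}\tilde f(z)\,g(z)\, L(dz) = \frac{1}{\pi}\int f(\lambda)\,\textup{Im}\,g(\lambda + i0)\, d\lambda.
$$
By Lemma \ref{l3,1}, the map $k \mapsto \partial_z\mathcal{T}_V(z(k))$ extends holomorphically across $D(0, N_{\gamma,\zeta})^*$ with values in $\s$, and under $k \mapsto k^2$ this disk covers the interval $]-N_{\gamma,\zeta}^2, N_{\gamma,\zeta}^2[ \setminus \lbrace 0\rbrace$; consequently $g(\lambda + i0)$ coincides on this interval with the value at $\lambda$ of the holomorphic extension, delivering \eqref{eq5,1} as a pointwise identity of smooth functions after collecting signs.

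The principal obstacle is the rigorous interchange of trace with the Helffer--Sj\"ostrand integral (and its analogue in the Stokes reduction), which demands Bochner-integrability of the integrand in $\s$-norm uniformly as $\vert \textup{Im}\,z\vert \to 0$; this is secured by combining the rapid vanishing of $\partial_{\bar z}\tilde f$ on $\br$ with the weighted resolvent bounds underlying Propositions \ref{p3,1}--\ref{p3,2}. A secondary but essential technical point is the verification of the matrix cyclicity $\vert V\vert^{1/2}J\vert V\vert^{1/2} = V$ for hermitian matrix-valued $V$, without which the two operator traces $\textup{Tr}(V(\chho-z)^{-2})$ and $\textup{Tr}(\partial_z\mathcal{T}_V(z))$ would not agree.
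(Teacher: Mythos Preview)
Your proposal is correct and follows essentially the same route as the paper: reduce $\xi'-\xi_2'$ to $-\textup{Tr}\big(\tfrac{d}{d\varepsilon}f(\chho+\varepsilon V)|_{\varepsilon=0}\big)$ via \eqref{eq1,23}--\eqref{eq1,25}, apply the Helffer--Sj\"ostrand formula, use cyclicity of the trace together with $|V|^{1/2}J|V|^{1/2}=V$ to identify the integrand with $\textup{Tr}\,\partial_z\mathcal{T}_V(z)$, and conclude by Green's formula on the upper and lower half-planes. The paper phrases the reflection step as the pair of identities \eqref{eq5,5}--\eqref{eq5,6} rather than your single symmetry $g(\bar z)=\overline{g(z)}$, but this is only a cosmetic difference.
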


\begin{proof}
To get \eqref{eq5,1} thanks to 
\eqref{eq1,23} and \eqref{eq1,25} 
it suffices to prove that for any
function $f \in C_0^\infty \left( 
]-N_{\gamma,\zeta}^2,
N_{\gamma,\zeta}^2[ \setminus 
\lbrace 0 \rbrace \right)$
\begin{equation}\label{eq5,2}
\textup{Tr} \hspace{0.4mm} \left(
\frac{d}{d\varepsilon} 
f(\chho + \varepsilon V)_{\vert 
\varepsilon = 0} \right) = - 
\frac{1}{\pi} \int_\br 
f(\lambda) \im \textup{Tr} 
\hspace{0.4mm} \big( \partial_z 
\mathcal{T}_V(\lambda) \big) 
d\lambda.
\end{equation}
Recall that by the Helffer-Sjöstrand
formula \big(see for instance 
\cite{dima}\big) for an analytic 
extension $\Tilde{f} \in C_0^\infty 
(\br^2)$ of $f$ \big($i.e.$ 
$\Tilde{f}_{\vert\br} = f$ and 
$\Bar{\partial}_z \Tilde{f}
(z) = \mathcal{O} \big( \vert \im(z) 
\vert^\infty \big)$\big) we have
\begin{equation}\label{eq5,3}
f(\chho + \varepsilon V) = - 
\frac{1}{\pi} \int_\bc 
\Bar{\partial}_z \Tilde{f}
(z) (z - \chho - \varepsilon V)^{-1} 
L(dz),
\end{equation}
$L(dz)$ being the Lebesgue measure 
on $\bc$. Quantity \eqref{eq5,3} is 
differentiable with respect to 
$\varepsilon$ and it is easy to
check that
\begin{equation}\label{eq5,4}
\frac{d}{d\varepsilon} 
f(\chho + \varepsilon V)_{\vert 
\varepsilon = 0} = - \frac{1}{\pi} 
\int_\bc \Bar{\partial}_z \Tilde{f}
(z) (z - \chho)^{-1}V(z - \chho)^{-1}
L(dz).
\end{equation}
Exploiting the diamagnetic inequality
and the boundedness of the magnetic
field $b$ it can be checked that 
for $\pm \im(z) > 0$ the operator 
$(z - \chho)^{-1}V(z - \chho)^{-1}$
is of trace class. For 
$\im(z) > 0$ by the cyclicity of 
the trace we have
\begin{equation}\label{eq5,5}
\textup{Tr} \hspace{0.4mm} \Big(
(z - \chho)^{-1}V(z - \chho)^{-1}
\Big) = 
\textup{Tr} \hspace{0.4mm} \Big(
J \vert V \vert^{\frac{1}{2}} 
(z - \chho)^{-2} \vert V 
\vert^{\frac{1}{2}}
\Big) = 
\textup{Tr} \hspace{0.4mm} \Big(
\partial_z \mathcal{T}_V(z) \Big)
\end{equation}
and for $\im(z) < 0$
\begin{equation}\label{eq5,6}
\textup{Tr} \hspace{0.4mm} \Big(
(z - \chho)^{-1}V(z - \chho)^{-1}
\Big) = - \overline{
\textup{Tr} \hspace{0.4mm} \Big(
\partial_z \mathcal{T}_V(\bar{z}) 
\Big)}.
\end{equation}
Therefore the operator 
$\frac{d}{d\varepsilon} 
f(\chho + \varepsilon V)_{\vert 
\varepsilon = 0}$ is of trace 
class and using \eqref{eq5,4} we 
get
\begin{equation}\label{eq5,7}
\begin{split}
\textup{Tr} \hspace{0.4mm} \left(
\frac{d}{d\varepsilon} 
f(\chho + \varepsilon V)_{\vert 
\varepsilon = 0} \right) = - 
\frac{1}{\pi} \int_{\im(z) > 0} &
\Bar{\partial}_z \Tilde{f}
(z) \textup{Tr} 
\hspace{0.4mm} \big( \partial_z 
\mathcal{T}_V(z) \big) L(dz) \\
& + \frac{1}{\pi} \int_{\im(z) < 0} 
\Bar{\partial}_z \Tilde{f}
(z) \overline{ \textup{Tr} 
\hspace{0.4mm} \big( \partial_z 
\mathcal{T}_V(\bar{z}) \big)} L(dz).
\end{split}
\end{equation}
Now \eqref{eq5,2} follows 
immediately from \eqref{eq5,7} using
the Green formula.
\end{proof}

For further use we recall complex 
analysis results due to J. Sjöstrand 
summarized in the following

\begin{prop}\label{p5,0}
{\cite{sj}, \cite{sj1}}

Let $\Omega \subset \bc$ be a simply 
connected domain satisfying 
$\Omega \cap \bc^+ \neq \emptyset$.
Let $z \mapsto F(z,h)$, $0 < h < h_0$
be a family of holomorphic functions
in $\Omega$ having at most a finite
number $N(h) \in \bn^\ast$ of zeros in
$\Omega$. Suppose that
\begin{equation}
F(z,h) = \mathcal{O}(1) 
e^{\mathcal{O}(1)N(h)}, \quad z \in 
\Omega,
\end{equation}
and that there exists constants $C$, 
$\varsigma > 0$ with $\Omega_\varsigma 
:= \big\lbrace z \in \bc : \im(z) > 
\varsigma \big\rbrace \neq \emptyset$ 
such that 
\begin{equation}
\vert F(z,h) \vert \geq e^{-CN(h)},
\quad z \in \Omega_\varsigma.
\end{equation}
Then for any $\Tilde{\Omega} \Subset
\Omega$ there exists $g(\cdot,h)$
holomorphic in $\Omega$ such that
\begin{equation}
F(z,h) = \prod_{j=1}^{N(h)} (z - z_j)
e^{g(z,h)}, \quad \frac{d}{dz} g(z,h) 
= \mathcal{O} \big( N(h) \big), \quad
z \in \Tilde{\Omega},
\end{equation}
where the $z_j$ are the zeros of 
$F(z,h)$ in $\Omega$.
\end{prop}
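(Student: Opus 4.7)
The plan is to extract the finitely many zeros of $F(\cdot,h)$ as an explicit polynomial factor, express the quotient as $e^{g(\cdot,h)}$ on the simply connected domain, and control $\re g$ by harmonic-function estimates that propagate the hypothesised lower bound out of $\Omega_\varsigma$.

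First I set $P(z,h) := \prod_{j=1}^{N(h)}(z-z_{j})$, where the $z_{j}$ are the zeros of $F(\cdot,h)$ in $\Omega$ repeated with multiplicity, and let $G(z,h) := F(z,h)/P(z,h)$. By construction $G(\cdot,h)$ is holomorphic and nowhere vanishing on $\Omega$, so simple connectedness provides a holomorphic branch $g(\cdot,h)$ of $\log G(\cdot,h)$ on $\Omega$, giving the required factorization $F(z,h) = \prod_{j=1}^{N(h)}(z-z_{j})\,e^{g(z,h)}$ and reducing the task to an $\mathcal{O}(N(h))$-estimate on $g'(\cdot,h)$ on compact subsets.

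Next I fix $\Tilde{\Omega}\Subset\Omega'\Subset\Omega$ with $\Omega'$ piecewise smooth and $\omega:=\Omega'\cap\Omega_\varsigma$ a non-empty open set, and study the harmonic function $u(z,h):=\re g(z,h)=\log\vert G(z,h)\vert$ on $\Omega'$ (note that $u$ has no singularities since $G$ is holomorphic and zero-free). On $\partial\Omega'$ the upper hypothesis $\vert F\vert\leq\mathcal{O}(1)e^{\mathcal{O}(1)N(h)}$ together with a uniform lower bound on $\vert P\vert$, obtained by choosing $\partial\Omega'$ (after a small $h$-uniform perturbation if needed) at positive distance from every $z_{j}\in\Omega$, yields $u(w,h)\leq C_{1}N(h)$ for $w\in\partial\Omega'$, hence $u\leq C_{1}N(h)$ throughout $\Omega'$ by the maximum principle. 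Conversely, on $\omega$ the hypothesised lower bound $\vert F\vert\geq e^{-CN(h)}$ and the uniform upper bound on $\vert P\vert$ give $u(z,h)\geq -C_{2}N(h)$ for $z\in\omega$.

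The key obstacle is propagating this lower bound from $\omega$ to the whole of $\Tilde{\Omega}$, and here I use that $v(z,h):=C_{1}N(h)-u(z,h)\geq 0$ is harmonic on $\Omega'$. Since $\Omega'$ is connected and $\Tilde{\Omega}$ is relatively compact, a Harnack chain of finitely many overlapping disks contained in $\Omega'$ joining $\omega$ to any point of $\Tilde{\Omega}$ yields $\sup_{\Tilde{\Omega}} v \leq C\inf_{\omega} v \leq C(C_{1}+C_{2})N(h)$, whence $\vert u(z,h)\vert \leq C_{3}N(h)$ on $\Tilde{\Omega}$. Finally, applying the Borel--Carath\'eodory lemma on a disk $D(z_{0},R)\subset\Omega'$ around any $z_{0}\in\Tilde{\Omega}$ converts the one-sided bound $\re g\leq C_{1}N(h)$ on $\Omega'$ and the lower bound $\re g(z_{0})\geq -C_{3}N(h)$ into $\vert g(z,h)-g(z_{0},h)\vert \leq C_{4}N(h)$ on a concentric smaller disk; Cauchy's integral formula for $g'$ on that smaller disk then delivers the required estimate $\vert g'(z,h)\vert = \mathcal{O}(N(h))$ uniformly on $\Tilde{\Omega}$.
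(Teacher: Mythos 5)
Your overall architecture is the standard one for this factorization lemma (which the paper itself does not prove but quotes from Sj\"ostrand): factor out $P(z,h)=\prod_{j}(z-z_j)$, define $g=\log\big(F/P\big)$ on the simply connected domain, bound the harmonic function $u=\re\, g$ from above on a subdomain and from below on the region where $\vert F\vert\geq e^{-CN(h)}$, propagate the lower bound by a Harnack chain, and finish with Borel--Carath\'eodory plus Cauchy. Those last steps (Harnack, Borel--Carath\'eodory applied to $g-g(z_0)$, Cauchy on a fixed-size disk) are fine as written.

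The genuine gap is in your upper bound $u\leq C_1N(h)$ on $\Omega'$. You derive it from a boundary estimate on $\partial\Omega'$, which needs $\vert P(w,h)\vert\geq d^{\,N(h)}$ with $d>0$ independent of $h$, i.e.\ a contour staying at an $h$-uniform positive distance from all the zeros $z_j$. No such contour exists in general: the zeros depend on $h$, and for instance they may form a $\delta(h)$-net of the region between $\Tilde{\Omega}$ and $\partial\Omega$ with $\delta(h)\to0$ and $N(h)\sim\delta(h)^{-2}$, so every contour separating $\Tilde{\Omega}$ from $\partial\Omega$ passes within $\delta(h)$ of some zero. A per-$h$ pigeonhole choice of contour only guarantees distance $\gtrsim 1/N(h)$, which gives $-\log\vert P\vert=\mathcal{O}\big(N(h)\log N(h)\big)$ on the contour and hence only $g'=\mathcal{O}(N\log N)$, short of the claimed $\mathcal{O}(N)$. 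The standard repair avoids the boundary/maximum-principle route: since $u$ is harmonic, use the mean value property on a circle $\partial D(z,r)$ of \emph{fixed} radius $r$ with $\overline{D(z,r)}\subset\Omega$; the mean of $\log\vert w-z_j\vert$ over that circle equals $\log\max\big(\vert z-z_j\vert,r\big)\geq\log r$, so the mean of $-\log\vert P\vert$ is at most $N(h)\log(1/r)$, and combined with $\log\vert F\vert\leq\mathcal{O}(N(h))$ this yields $u(z)\leq\mathcal{O}(N(h))$ at every point of a compact subset, uniformly in $h$, with no condition on the location of the zeros. (Alternatively, an averaged pigeonhole over a one-parameter family of contours, choosing one on which $\sum_j-\log\mathrm{dist}(\cdot,z_j)$ does not exceed its average $\mathcal{O}(N(h))$, also works.) With that replacement, the rest of your argument goes through.
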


In the next proposition the domains 
$\mathscr{W}_\pm \Subset \Omega_\pm$ 
and the intervals $I_\pm$ are 
introduced in Section \ref{s2} just 
after \eqref{eq2,51}.

\begin{prop}\label{p5,1}
Assume that $V$ satisfies assumptions 
\eqref{eq1,12}-\eqref{eq1,13}. Let 
$\mathscr{W}_\pm \Subset 
\Omega_\pm$ and $I_\pm$ be as above.
Then there exists $r_0 > 0$ and 
holomorphic functions $g_\pm$ in 
$\Omega_\pm$ satisfying for any 
$\mu \in rI_\pm$
\begin{equation}\label{eq5,8}
\begin{split}
\xi_2'(\mu) = \frac{1}{\pi r} \im 
\hspace{0.5mm} g'_\pm \left( 
\frac{\mu}{r},r \right) + &
\sum_{\substack{w \in \textup{Res}
(\chh) \cap r \Omega_\pm \\ \im (w) 
\neq 0}} \frac{\im (w)}{\pi \vert 
\mu - w \vert^2} \\
& - \sum_{w \in \textup{Res}(\chh) 
\cap r I_\pm} \delta (\mu - w) - 
\frac{1}{\pi} \im \textup{Tr} 
\hspace{0.4mm} \big( \partial_z 
\mathcal{T}_V(\mu) \big),
\end{split}
\end{equation} 
where the functions $g_\pm(\cdot,r)$ 
satisfy
\begin{equation}\label{eq5,9}
\begin{split}
g_\pm(z,r) & = \mathcal{O} \left[ 
\textup{Tr} \hspace{0.4mm} 
\one_{(s_1\sqrt{r},\infty)} 
\big( p \textbf{\textup{W}} p \big)
\vert \ln r \vert + \Tilde{n}_1 
\left( \frac{1}{2} s_1\sqrt{r} 
\right) + \Tilde{n}_2 \left( 
\frac{1}{2} s_1\sqrt{r} \right) 
\right] \\
& = \mathcal{O} \left( \vert 
\ln r \vert r^{-1/m_\perp} \right),
\end{split}
\end{equation}
uniformly with respect to $0 < r 
< r_0$ and $z \in \Omega_\pm$ with 
$\Tilde{n}_q$, $q = 1$, $2$ defined
by \eqref{eq4,17}.
\end{prop}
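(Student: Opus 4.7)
The plan is to express $\xi_2'$ as the imaginary part of a logarithmic derivative of $\det_2(I + \mathcal{T}_V(\cdot))$ and then apply J. Sjöstrand's factorization Proposition \ref{p5,0}. By the cyclicity identity $\det_2(I + V(\chho-z)^{-1}) = \det_2(I + \mathcal{T}_V(z))$ (both operators are Hilbert--Schmidt) and the standard formula $\frac{d}{dz}\ln\det_2(I+T(z)) = \textup{Tr}\bigl((I+T(z))^{-1}\partial_z T(z)\bigr) - \textup{Tr}(\partial_z T(z))$ valid for holomorphic $T(z) \in \sd$ with $\partial_z T(z) \in \s$, the definition \eqref{eq1,24} yields
$$
\xi_2'(\mu) = -\frac{1}{\pi}\im \textup{Tr}\bigl(\partial_z\mathcal{T}_V(\mu)\bigr) + \frac{1}{\pi}\im \textup{Tr}\bigl((I+\mathcal{T}_V(\mu+i0))^{-1}\partial_z\mathcal{T}_V(\mu+i0)\bigr),
$$
the first summand already matching the last term of \eqref{eq5,8}. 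The remaining work is to massage the second summand into the harmonic-measure/delta-function part plus the holomorphic remainder $\frac{1}{\pi r}\im g_\pm'(\mu/r, r)$.

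To do this I would switch to the $k$-variable $z = k^2$ and set $F(k,r) := \det_2\bigl(I + \mathcal{T}_V(z(k))\bigr)$. By Lemma \ref{l3,1}, $F$ is holomorphic on $D(0,\epsilon)$; by Proposition \ref{p4,1}, its zeros in $D(0,\epsilon)^\ast$ correspond (with multiplicities) to resonances of $\chh$. Rescaling by $\tilde k = k/\sqrt r$, define $\tilde F(\tilde k, r) := F(\sqrt r\, \tilde k, r)$ on the fixed-scale simply connected domain corresponding to $\Omega_\pm$. Proposition \ref{p5,0} then applies provided three estimates hold: an upper bound $|\tilde F(\tilde k, r)| \leq \mathcal{O}(1) e^{\mathcal{O}(1) N(r)}$, obtained from the decomposition \eqref{eq4,21}--\eqref{eq4,24} together with $\Vert A(\sqrt r\, \tilde k)\Vert_{\sd}^2 = \mathcal{O}(1)$ and the inequality $|\det_2(I+X)| \leq e^{\Vert X\Vert_{\sd}^2/2}$; a lower bound $|\tilde F(\tilde k, r)| \geq e^{-C N(r)}$ on a subset of $\{\im \tilde k^2 > \varsigma\}$, obtained by reproducing \eqref{eq4,28}--\eqref{eq4,32} uniformly at scale $1$; and the upper bound $N(r) = \mathcal{O}(|\ln r|\, r^{-1/m_\perp})$ on the number of zeros, which is exactly Theorem \ref{t4,2}. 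This produces a factorization $\tilde F(\tilde k, r) = \prod_j(\tilde k - \tilde k_j(r))\, e^{\tilde g_\pm(\tilde k, r)}$ on the smaller set corresponding to $\mathscr{W}_\pm$, with $\tilde g_\pm'(\tilde k, r) = \mathcal{O}(N(r))$, which is precisely the bound \eqref{eq5,9}.

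It remains to take the logarithmic derivative of this factorization, return to the $z$-variable via the partial fraction $\frac{1}{z-z_j} = \frac{1}{2k_j}\bigl(\frac{1}{k-k_j} - \frac{1}{k+k_j}\bigr)$ with $z_j = k_j^2$, and let $z = \mu + i0$. The terms $\frac{1}{k+k_j}$ are holomorphic near $k = \sqrt{\mu+i0}$ (since the relevant $k_j$ lie in the lower half-plane by Theorems \ref{t4,1}--\ref{t4,2}) and absorb into the holomorphic remainder $g_\pm$. The remaining singularities $\frac{1}{\mu + i0 - z_j}$ produce via Sokhotski--Plemelj the imaginary parts $\frac{\im z_j}{\pi|\mu-z_j|^2}$ for non-real $z_j$ and $-\delta(\mu-z_j)$ for real $z_j$, yielding exactly the two sums in \eqref{eq5,8}.

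The main obstacle will be the lower bound on $\tilde F(\tilde k, r)$: because $\mathcal{T}_V$ is merely Hilbert--Schmidt, one has to separate the singular part $\frac{iJ}{k}\mathscr{B}$ in \eqref{eq4,11} and track through the rescaling $k = \sqrt r\, \tilde k$ how the ``few large eigenvalues'' of $A(k)$ producing the $\mathcal{O}(s^{-1}\varsigma^{-1})$ factor in \eqref{eq4,31} contribute only an $\mathcal{O}(N(r))$ term to the exponent of the lower bound, not more; this amounts to verifying that the Weinstein--Aronszajn/Schur-complement reduction \eqref{eq4,24} of Theorem \ref{t4,1} survives uniformly at scale $\sqrt r$ inside the sectors $\Omega_\pm$ on both sides of the spectrum.
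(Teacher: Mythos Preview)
Your overall strategy---express $\xi_2'$ through the logarithmic derivative of $\det_2(I+\mathcal T_V)$ and then invoke Sj\"ostrand's factorization Proposition~\ref{p5,0}---is the same as the paper's. The gap is in the upper and lower bounds you need as input for Proposition~\ref{p5,0}. You propose to get them from \eqref{eq4,21}--\eqref{eq4,24} and \eqref{eq4,28}--\eqref{eq4,32}, but those estimates are for $\det_2\bigl(I+A(k)\bigr)$, not for $\tilde F=\det_2\bigl(I+\mathcal T_V\bigr)$ itself. Passing from one to the other via \eqref{eq4,26} introduces the factors $\det\bigl(I+\tfrac{iJ}{k}\mathscr B\bigr)$ and $e^{-\operatorname{Tr}(\mathcal T_V-A)}$, whose crude bounds are of order $e^{\mathcal O(r^{-1/2})}$ (since $\Vert\mathscr B\Vert_{\mathcal S_1}/|k|\sim r^{-1/2}$), strictly larger than the $e^{\mathcal O(N(r))}$ with $N(r)=\mathcal O(|\ln r|\,r^{-1/m_\perp})$ that \eqref{eq5,9} requires. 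Moreover, the factorization \eqref{eq4,24} itself depends on the invertibility of $I+\tfrac{iJ}{k}\mathscr B$, which in Theorem~\ref{t4,1} is obtained only for $V$ of definite sign and only inside the sector $\mathcal C_\delta(J)$; Proposition~\ref{p5,1} makes no sign assumption, so you cannot appeal to that factorization throughout $\Omega_\pm$.

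The paper avoids both problems by a different splitting. It cuts $\mathscr B$ spectrally at level $\tfrac12 s_1\sqrt r$ and writes $\mathscr A(k)=\mathscr A_0+\tilde{\mathscr A}(k)$ with $\mathscr A_0$ finite rank and $\Vert\tilde{\mathscr A}(k)\Vert<\tfrac14$, so that
\[
I+\mathcal T_V\bigl(z(k)\bigr)=\bigl(I+\mathscr K(k,s)\bigr)\Bigl(I+\tfrac{iJ}{k}\mathscr B\,\mathbf 1_{[0,\frac12 s]}(\mathscr B)+\tilde{\mathscr A}(k)\Bigr),
\]
where the second factor has norm-less-than-$1$ perturbation and is therefore invertible for \emph{every} $k$ in the annulus and \emph{any} $V$, while $\mathscr K$ is finite rank of order $\operatorname{Tr}\mathbf 1_{(\frac12 s,\infty)}(\mathscr B)+1$. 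Sj\"ostrand's proposition is then applied separately to the finite-rank determinant $\mathscr D=\det(I+\mathscr K)$ (yielding $g_{0,\pm}$ with the first term of \eqref{eq5,9}) and to $\det_2(I+\mathfrak A)$, $\mathfrak A:=\tfrac{iJ}{k}\mathscr B\,\mathbf 1_{[0,\frac12 s]}+\tilde{\mathscr A}$ (yielding $g_1$ with the $\tilde n_2$ term), while the leftover trace $e^{-\operatorname{Tr}(\mathcal T_V-\mathfrak A)}$ contributes the $\tilde n_1$ term via an explicit $g_2$. This is the missing idea: the spectral cutoff on $\mathscr B$ is what converts the uncontrolled $r^{-1/2}$ trace into the three controlled pieces in \eqref{eq5,9}, and simultaneously removes any sign restriction on $V$.
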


\begin{proof}
The first step consists to reduce 
the study of the zeros of the 
$2$-regularized perturbation determinant 
to that of a suitable holomorphic function 
in $\Omega_\pm$ satisfying the assumptions 
of Proposition \ref{p5,0}.

By Proposition \ref{p4,2} for $0 < s 
< \vert k \vert \leq s_{0} < \epsilon$
$$
\mathcal{T}_{V} \big( z(k) \big) = 
\frac{iJ}{k} \mathscr{B} + \mathscr{A}
(k).
$$
The operator-valued function $k \mapsto 
\mathscr{A}(k)$ is analytic near zero 
with values in $\sd \big( L^2(\br^3) 
\big)$. Then for $s_{0}$ small enough 
there exists $\mathscr{A}_{0}$ a 
finite-rank operator independent of $k$ 
and $\tilde{\mathscr{A}}(k)$ analytic 
near zero satisfying 
$\Vert \tilde{\mathscr{A}}(k) \Vert < 
\frac{1}{4}$, $\vert 
k \vert < s_{0}$ such that
\begin{equation}
\mathscr{A}(k) = \mathscr{A}_{0} + 
\tilde{\mathscr{A}}(k).
\end{equation}
Consider the decomposition 
\begin{equation}\label{eq5,10}
\mathscr{B} = \mathscr{B} 
\mathbf{1}_{[0,\frac{1}{2}s]} 
(\mathscr{B}) + \mathscr{B} 
\mathbf{1}_{]\frac{1}{2}s,\infty[} 
(\mathscr{B}).
\end{equation}
Obviously $\left\Vert (iJ/k) 
\mathscr{\mathscr{B}} \mathbf{1}_{[0,
\frac{1}{2}s]} (\mathscr{B}) + 
\tilde{\mathscr{A}}(k) 
\right\Vert < \frac{3}{4}$ for $0 < s
 < \vert k \vert < s_{0}$. Then
\begin{equation}\label{eq5,11}
I + \mathcal{T}_{V} \big( z(k) \big) 
= \big( I + \mathscr{K}(k,s) \big) 
\left( I + \frac{iJ}{k} \mathscr{B} 
\mathbf{1}_{[0,\frac{1} {2}s]} 
(\mathscr{B}) + \tilde{\mathscr{A}}
(k) \right),
\end{equation}
where $K(k,s)$ is given by
\begin{equation}\label{eq5,111}
\mathscr{K}(k,s) := \left( \frac{iJ}{k} 
\mathscr{B} \mathbf{1}_{]\frac{1}{2}s,
\infty[} (\mathscr{\mathscr{B}}) + 
\mathscr{A}_{0} \right) \left( I + 
\frac{iJ}{k} \mathscr{B} 
\mathbf{1}_{[0,\frac{1}{2}s]} 
(\mathscr{B}) + \tilde{\mathscr{A}}(k) 
\right)^{-1}.
\end{equation}
Its rank is of order
\begin{equation}
O \left( \textup{Tr} \hspace{0.4mm} 
\one_{(\frac{1}{2}s,\infty)} 
(\mathscr{B}) + 1 \right) = 
\mathcal{O} \left( \textup{Tr} 
\hspace{0.4mm} \one_{(s,\infty)} 
\big( p \textbf{\textup{W}} p \big) 
+ 1 \right)
\end{equation}
according to (\ref{eq4,12}) and 
moreover its norm is bounded by 
$\mathcal{O} \left( s^{-1} \right)$ 
for $0 < s < \vert k \vert < s_{0}$.
Since $\Vert (iJ/k) \mathscr{B} 
\mathbf{1}_{[0,\frac{1}{2}s]} 
(\mathscr{B}) + \tilde{\mathscr{A}}(k)
\Vert < 1$ for $0 < s < \vert k \vert 
< s_{0}$ then 
\begin{equation}
\textup{det} \left( \left( I + \frac{iJ}{k} 
\mathscr{B} \mathbf{1}_{[0,\frac{1}
{2}s]} (\mathscr{B}) + 
\tilde{\mathscr{A}}(k) \right) 
e^{-T_V \big( z(k) \big)} \right) \neq 0.
\end{equation}
Therefore the zeros of 
$\textup{det}_2 \big( I + 
\mathcal{T}_{V} \big( z(k) \big) 
\big)$ are those of 
\begin{equation}\label{eq5,12}
\mathscr{D}(k,s) := \textup{det} 
\big( I + \mathscr{K}(k,s) \big)
\end{equation} 
with the same multiplicities 
thanks to Proposition \ref{p4,1} 
and Property \eqref{eqa,3} applied 
to \eqref{eq5,11}.
The above properties of 
$\mathscr{K}(k,s)$ imply that
\begin{equation}\label{eq5,13}
\begin{aligned}
\mathscr{D}(k,s) & = \prod_{j=1}^
{\mathcal{O} \big( \textup{Tr} 
\hspace{0.4mm} \one_{(s,\infty)} 
(p \textbf{\textup{W}} p) + 1 \big)} 
\big{(} 1 + \lambda_{j}(k,s) \big{)}\\
& = \mathcal{O}(1) \hspace{0.5mm} 
\textup{exp} \hspace{0.5mm} 
\Big( \mathcal{O} \big( \textup{Tr} 
\hspace{0.4mm} \one_{(s,\infty)} 
\big( p \textbf{\textup{W}} p 
\big) + 1 \big) \vert \ln s \vert 
\Big)
\end{aligned}
\end{equation}
for $0 < s < \vert k \vert < s_{0}$, 
where the $\lambda_{j}(k,s)$ are 
the eigenvalues of $\mathscr{K} := 
\mathscr{K}(k,s)$ satisfying 
$\vert \lambda_{j}(k,s) \vert = 
\mathcal{O} \left( s^{-1} \right)$.
If
$\im(k^2) > \varsigma > 0$ with $0 
< s < \vert k \vert < s_{0}$ then
$$
\mathscr{D}(k,s)^{-1} = \det \big( I 
+ \mathscr{K} \big)^{-1} = \det 
\big( I - \mathscr{K} ( I + 
\mathscr{K})^{-1} \big).
$$
Thus with the help of \eqref{eq4,281} 
we can show similarly to \eqref{eq5,13}
that
\begin{equation}\label{eq5,14}
\small{\vert \mathscr{D}(k,s) \vert 
\geq C \hspace{0.5mm} \textup{exp} 
\hspace{0.5mm} \Big( - C \big( 
\textup{Tr} \hspace{0.4mm} \one_{
(s,\infty)} \big( p \textbf{\textup{W}} 
p \big) + 1 \big) \big( \vert \textup{ln} 
\hspace{0.5mm} \varsigma \vert + \vert 
\textup{ln} \hspace{0.5mm} s \vert \big) 
\Big)}.
\end{equation}

Now for $\mathscr{D}(k,s)$ defined by 
\eqref{eq5,12} fix $0 < s_1 < 
\sqrt{\textup{dist} \big( \Omega_\pm,0 
\big)}$ and consider the functions 
\begin{equation}
F_\pm : z \in \Omega_\pm \mapsto 
\mathscr{D} \left( \sqrt{r}\sqrt{z},
\sqrt{r}s_1 \right)
\end{equation}
where
\begin{equation}\label{eq5,15}
\displaystyle \sqrt{z} =
\left\{ \begin{array}{ccc} \sqrt{\rho}
e^{i\frac{\theta}{2}} & \hbox{ if } 
& z = \rho e^{i\theta} \in \Omega_+, \\  
i\sqrt{\rho} e^{-i\frac{\theta}{2}} 
& \hbox{ if } & z = -\rho e^{-i\theta} 
\in \Omega_-. \end{array} \right.
\end{equation}
The functions $F_\pm$ are holomorphic 
in $\Omega_\pm$ and according to 
Proposition \ref{p4,1} $\Tilde{\omega}$ 
is a zero of $F_\pm$ if and only if 
$\omega = \Tilde{\omega}r$ is a 
resonance of $\chh$. Then by Proposition 
\ref{p5,0} applied to $F = F_+$ and 
$F(z) = \overline{F_-(-\bar{z})}$ with 
$h = r$, $N(r) = \textup{Tr} 
\hspace{0.4mm} \one_{(s_1\sqrt{r},
\infty)} \big( p \textbf{\textup{W}} 
p \big) \vert \ln r \vert$ there exists 
holomorphic functions $g_{0,\pm}$ in
$\Omega_\pm$ satisfying for any 
$z \in \Omega_\pm$
\begin{equation}\label{eq5,16}
\mathscr{D}_\pm \left( \sqrt{r}\sqrt{z},
\sqrt{r}s_1 \right) = \prod_{w \in 
\textup{Res}(\chh) \cap r \Omega_\pm} 
\left( \frac{zr - \omega}{r} \right)
e^{g_{0,\pm}(z,r)}
\end{equation}
with
\begin{equation}\label{eq5,17}
\frac{d}{dz} g_{0,\pm}(z,r) 
= \mathcal{O} \left( \textup{Tr} 
\hspace{0.4mm} \one_{(s_1\sqrt{r},
\infty)} \big( p \textbf{\textup{W}} 
p \big) \vert \ln r \vert \right),
\end{equation}
uniformly with respect to $z \in 
\mathscr{W}_\pm$.

From above 
\eqref{eq5,11}-\eqref{eq5,12} we 
know that for $z = z \big( \sqrt{r} 
k \big)$, $0 < s_1 < \vert k \vert 
< s_0$
\begin{equation}\label{eq5,18}
\begin{split}
\textup{det}_2 & \big( I + 
\mathcal{T}_{V} (z) \big) = \\ 
& \mathscr{D} \left( 
\sqrt{r}k, \sqrt{r}s_1 
\right) \textup{det} \left( 
\left( I + \frac{iJ}{\sqrt{r}k} 
\mathscr{B} \mathbf{1}_{[0,\frac{1}
{2}s_1\sqrt{r}]} (\mathscr{B}) + 
\tilde{\mathscr{A}}(\sqrt{r}k) 
\right) e^{-T_V (z)} \right).
\end{split}
\end{equation}
By setting 
$$
\mathfrak{A}(k)
:= \frac{iJ}{\sqrt{r}k} 
\mathscr{B} \mathbf{1}_{[0,\frac{1}
{2}s_1\sqrt{r}]} (\mathscr{B}) + 
\tilde{\mathscr{A}}(\sqrt{r}k)
$$ 
we deduce from \eqref{eq5,11} that 
$\mathcal{T}_{V}(z) - \mathfrak{A}(k)$
is a finite-rank operator thanks to 
the properties of the operator 
$\mathscr{K} \big( \sqrt{r}k,
\sqrt{r}s_1 \big)$ given by 
\eqref{eq5,111}. Then as in 
\eqref{eq4,26} we can prove that
\begin{equation}\label{eq5,19}
\begin{split}
\textup{det} \Big( 
\Big( I + \frac{iJ}{\sqrt{r}k} 
\mathscr{B} \mathbf{1}_{[0,\frac{1}
{2}s_1\sqrt{r}]} & (\mathscr{B}) + 
\tilde{\mathscr{A}}(\sqrt{r}k) 
\Big) e^{-T_V (z)} \Big) \\
& = \textup{det}_2 \big( I + 
\mathfrak{A}(k) \big) 
e^{-\textup{Tr} \big( T_V (z) - 
\mathfrak{A}(k) \big)}
\end{split}
\end{equation}
with $\textup{det}_2 \big( I + 
\mathfrak{A}(k) \big) \neq 0$ since
$\Vert \mathfrak{A}(k) \Vert < 1$ 
for $0 < s_1 < \vert k \vert 
< s_0$. The holomorphicity of
$\tilde{\mathscr{A}}(k)$ with values 
in $\sd \big( L^2(\br^3) \big)$ 
combined with \eqref{eq4,17} of 
Corollary \ref{c4,1} imply that
\begin{equation}\label{eq5,20}
\Vert \mathfrak{A}(k) \Vert_2^2 
= \mathcal{O} \left( \Tilde{n}_2  
\left( \frac{1}{2} \sqrt{r}s_1 
\right) \right).
\end{equation}
Then we have 
\begin{equation}\label{eq5,21}
\textup{det}_2 \big( I + 
\mathfrak{A}(k) \big)
= \mathcal{O}(1) e^{\mathcal{O}(1)
\Tilde{n}_2 \left( \frac{1}{2} 
\sqrt{r}s_1 \right)}.
\end{equation}
On the other hand it can be also 
checked that 
\begin{equation}\label{eq5,22}
\textup{det}_2 \big( I + 
\mathfrak{A}(k) \big)^{-1} = 
\textup{det}_2 \left( I - 
\mathfrak{A}(k) \big( I + 
\mathfrak{A}(k) \big)^{-1} \right) 
= \mathcal{O}(1) e^{\mathcal{O}(1)
\Tilde{n}_2 \left( \frac{1}{2} 
\sqrt{r}s_1 \right)}.
\end{equation}
Then Proposition \ref{p5,0} implies
that there exists $g_1(\cdot,r)$  
holomorphic in $\Omega_\pm$ such 
that 
\begin{equation}\label{eq5,23}
\textup{det}_2 \big( I + 
\mathfrak{A}(k) \big)
= e^{g_1(z,r)}
\end{equation}
with 
\begin{equation}\label{eq5,24}
\frac{d}{dz} g_{1}(z,r) 
= \mathcal{O} \left( \Tilde{n}_2 
\left( \frac{1}{2} \sqrt{r}s_1 
\right) \right),
\end{equation}
uniformly with respect to $z 
\in \mathscr{W}_\pm$. Therefore
according to definition 
\eqref{eq1,24} of $\xi_2$ and by
combining \eqref{eq5,18}, 
\eqref{eq5,16}, \eqref{eq5,19} 
with \eqref{eq5,23} we get
for $\mu = z \big( \sqrt{r}k \big) 
= rk^2 \in r(\Omega_\pm \cap \br)$
\begin{equation}\label{eq5,25}
\begin{split}
\xi_2'(\mu) & = \frac{1}{\pi r} \im 
\hspace{0.5mm} \partial_\lambda 
(g_{0,\pm} + g_1) \left( 
\frac{\mu}{r},r \right) + 
\sum_{\substack{w \in \textup{Res}
(\chh) \cap r \Omega_\pm \\ \im (w) 
\neq 0}} \frac{\im (w)}{\pi \vert 
\mu - w \vert^2} - \sum_{w \in 
\textup{Res}(\chh) \cap r I_\pm} 
\delta (\mu - w) \\
& + \frac{1}{\pi} \im \textup{Tr} 
\hspace{0.4mm} \left( \frac{1}{2k}
\partial_k \left( \frac{iJ}{k} 
\mathscr{B} \mathbf{1}_{[0,\frac{1}
{2}s_1\sqrt{r}]} (\mathscr{B}) + 
\tilde{\mathscr{A}}(k) \right) - 
\partial_z \mathcal{T}_V(\mu + i0) 
\right)
\end{split}
\end{equation} 
with
\begin{equation}\label{eq5,27}
\displaystyle k =
\left\{ \begin{array}{ccc} \sqrt{\mu} 
& \hbox{ if } & \mu > 0, \\  
i\sqrt{-\mu} & \hbox{ if } & \mu < 0. 
\end{array} \right.
\end{equation}
By \eqref{eq4,17} of Corollary 
\ref{c4,1} 
\begin{equation}
\textup{Tr} 
\hspace{0.4mm} \left( \frac{1}{2k}
\partial_k \left( \frac{iJ}{k} 
\mathscr{B} \mathbf{1}_{[0,\frac{1}
{2}s_1\sqrt{r}]} (\mathscr{B}) 
\right) \right) = - 
\frac{iJs_1\sqrt{r}}{4k^3} 
\Tilde{n}_1 \left( \frac{1}{2} 
\sqrt{r}s_1 \right).
\end{equation}
Thanks to Lemma \ref{l3,1} 
$\partial_z \mathcal{T}_V(z)$ is of 
trace class. Then since $\mathscr{B} 
\in \s \big( L^2(\br^3) \big)$ the
operator 
\begin{equation}\label{eq5,271}
\partial_k \tilde{\mathscr{A}}(k) =
\partial_k \mathscr{A}(k) = 
\partial_k \Big( 
\mathcal{T}_V \big( z(k) \big) - 
\frac{iJ}{k} \mathscr{B} \Big)
\end{equation}
is of trace class. Moreover the 
definition \eqref{eq4,111} of 
$\mathscr{A}(k)$ implies that
\begin{equation}\label{eq5,272}
\textup{Tr} \hspace{0.4mm} \left( 
\frac{1}{2k} \partial_k 
\mathscr{A}(k) \right) = 
\textup{Tr} \hspace{0.4mm} 
\left( J \vert V 
\vert^{\frac{1}{2}} \big( \chho 
- k^2 \big)^{-2} \textup{Q} \vert 
V \vert^{\frac{1}{2}} \right) = 
\textup{Tr} \hspace{0.4mm} 
\left( J \vert V 
\vert^{\frac{1}{2}} \big( \chho 
- \mu \big)^{-2} \textup{Q} \vert 
V \vert^{\frac{1}{2}} \right).
\end{equation}
By setting $g_\pm = g_{0,\pm} + 
g_1 + g_2$ with
\begin{equation}
g_2(z) = \frac{iJs_1}{2\sqrt{z}} 
\Tilde{n}_1 \left( \frac{1}{2} 
\sqrt{r}s_1 \right),
\end{equation}
where $\sqrt{z}$ is defined on 
$\Omega_\pm$ by \eqref{eq5,15} we
get the desired conclusion.
\end{proof}

The representation of the SSF near
zero can be specified if the 
potential $V$ is of definite sign 
$J = sign(V)$. According to Remark
\ref{r2,1} in the next proposition
the case $"-"$ is with respect 
the definite sign $J = +$.

\begin{prop}\label{p5,2}
Assume the assumptions of Theorem 
\ref{t2,1} with $V$ of definite
sign $J = sign(V)$. Then for 
$\lambda \in rI_\pm$ \eqref{eq5,8} 
holds with
\begin{equation}\label{eq5,28}
\frac{1}{r} \im \hspace{0.5mm} g'_\pm 
\left( \frac{\lambda}{r},r \right) = 
\frac{1}{r} \im \hspace{0.5mm} \Tilde{g}'_\pm 
\left( \frac{\lambda}{r},r \right) + 
\im \hspace{0.5mm} \Tilde{g}'_{1,\pm}(\lambda)
+ \one_{(0,N_{\gamma,\zeta}^2)}(\lambda) 
J \phi'(\lambda),
\end{equation}
where the function $\phi$ is defined by
\begin{equation}\label{eq5,29}
\phi(\lambda) := \textup{Tr} 
\hspace{0.4mm} \left( \arctan 
\frac{K^\ast K}{\sqrt{\lambda}} \right) 
= \textup{Tr} \hspace{0.4mm} \left( \arctan 
\frac{p\textbf{\textup{W}}p}{2\sqrt{\lambda}}
\right),
\end{equation}
the functions $z \mapsto \Tilde{g}_\pm (z,r)$ 
being holomorphic in $\Omega_\pm$ and satisfying
\begin{equation}\label{eq5,30}
\Tilde{g}_\pm (z,r) = \mathcal{O} 
\big( \vert \ln r \vert \big),
\end{equation}
uniformly with respect to $0 < r < r_0$ and 
$z \in \Omega_\pm$. The functions $z \mapsto 
\Tilde{g}_{1,\pm}(z)$ are holomorphic in 
$\pm ]0,N_{\gamma,\zeta}^2[ 
e^{\pm i]-2\theta_0,2\varepsilon_0[}$
and there exists a positive constant 
$C_{\theta_0}$ depending on $\theta_0$ such 
that
\begin{equation}\label{eq5,31}
\vert \Tilde{g}_{1,\pm}(z) \vert \leq
C_{\theta_0} \sigma_2 \left( \sqrt{\vert 
z \vert} \right)^{\frac{1}{2}}
\end{equation}
for $z \in \pm ]0,N_{\gamma,\zeta}^2[ 
e^{\pm i]-2\theta_0,2\varepsilon_0[}$, 
where the quantity $\sigma_2(\cdot)$ is 
defined by 
\eqref{eq4,15}.
\end{prop}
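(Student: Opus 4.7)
The plan is to exploit the definite-sign hypothesis to refine the construction of Proposition~\ref{p5,1}. When $V$ has definite sign $J$, the operator $\mathscr{B} = K^\ast K$ appearing in the singular part of Proposition~\ref{p4,2} is self-adjoint and non-negative, and by Remark~\ref{r2,1} condition~\eqref{eq2,6} guarantees that $I + (iJ/k)\mathscr{B}$ is invertible uniformly in the relevant sector. I therefore start from the exact factorisation
\begin{equation*}
I + \mathcal{T}_V\bigl(z(k)\bigr) = \bigl(I + A(k)\bigr)\bigl(I + (iJ/k)\mathscr{B}\bigr), \qquad A(k) := \mathscr{A}(k)\bigl(I + (iJ/k)\mathscr{B}\bigr)^{-1},
\end{equation*}
with $A(\cdot)$ holomorphic and $\sd$-valued on the sector, and take $\det_2$ via the multiplicative identity $\det_2\bigl((I+T_1)(I+T_2)\bigr) = \det_2(I+T_1)\det_2(I+T_2)\,e^{-\textup{Tr}(T_1 T_2)}$, which is legitimate because $\mathscr{B}\in\s$ makes the trace correction finite.

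I would then analyse each factor separately. Since $\mathscr{B}$ is trace class, $\det_2(I+(iJ/k)\mathscr{B}) = \det(I+(iJ/k)\mathscr{B})\,e^{-(iJ/k)\textup{Tr}(\mathscr{B})}$; evaluating at $k = \sqrt{\lambda+i0}$ and using the eigenvalues $\mu_j \geq 0$ of $\mathscr{B}$ together with the oddness of $\arctan$ gives
\begin{equation*}
\im\log\det_2\bigl(I + (iJ/\sqrt{\lambda})\mathscr{B}\bigr) = J\phi(\lambda) - J\,\textup{Tr}(\mathscr{B})/\sqrt{\lambda}.
\end{equation*}
The first term yields exactly the indicator contribution $\one_{(0,N_{\gamma,\zeta}^2)}(\lambda)J\phi'(\lambda)$ of~\eqref{eq5,28} after differentiation, while the second extends holomorphically as $-iJ\,\textup{Tr}(\mathscr{B})/\sqrt{z}$ to the larger sector $\pm]0,N_{\gamma,\zeta}^2[e^{\pm i]-2\theta_0,2\varepsilon_0[}$ and will be absorbed into $\tilde{g}_{1,\pm}$. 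The regular factor $\det_2(I+A(k))$ plays the role of $F_\pm$ from the proof of Proposition~\ref{p5,1}: Proposition~\ref{p4,1} combined with the invertibility of the singular factor ensures that its zeros are exactly the resonances of $\chh$ with the correct multiplicity, and the upper and lower bounds \eqref{eq4,27}-\eqref{eq4,32} transfer verbatim. Applying Sjöstrand's Proposition~\ref{p5,0} I obtain a holomorphic $\tilde{g}_\pm(\cdot,r)$ on $\Omega_\pm$, but now with the bound $\tilde{g}_\pm = \mathcal{O}\bigl(\vert\ln r\vert\bigr)$ because Theorem~\ref{t4,1} upgrades the weaker resonance count of Theorem~\ref{t4,2} to $N(r) = \mathcal{O}(\vert\ln r\vert)$ throughout $\mathcal{C}_\delta(J)$; this is precisely where the polynomial factor $r^{-1/m_\perp}$ of Proposition~\ref{p5,1} disappears.

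The trace correction $-\textup{Tr}\bigl(A(k)\cdot(iJ/k)\mathscr{B}\bigr)$ from the multiplicative formula, together with the linear contribution $-iJ\,\textup{Tr}(\mathscr{B})/\sqrt{z}$ isolated above, then defines $\tilde{g}_{1,\pm}(z)$. Rewriting
\begin{equation*}
A(k)\cdot(iJ/k)\mathscr{B} = \mathscr{A}(k)\cdot\mathfrak{B}(k), \qquad \mathfrak{B}(k) := (iJ/k)\mathscr{B}\bigl(I+(iJ/k)\mathscr{B}\bigr)^{-1},
\end{equation*}
and invoking Cauchy--Schwarz for the trace together with Corollary~\ref{c4,1}'s estimate $\Vert\mathfrak{B}(k)\Vert_\sd \leq f(\theta)\sigma_2(\vert k\vert)^{1/2}$ and the local uniform boundedness of $\Vert\mathscr{A}(k)\Vert_\sd$ on $D(0,\epsilon)$, I deduce $\vert\textup{Tr}(\mathscr{A}(k)\mathfrak{B}(k))\vert \leq C_{\theta_0}\sigma_2(\vert k\vert)^{1/2}$; since $\vert k\vert = \sqrt{\vert z\vert}$, this yields \eqref{eq5,31}. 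The main obstacle I anticipate is the careful bookkeeping: verifying that (i) the zeros of $\det_2(I+A(k))$ correspond with the correct multiplicity to the resonances of $\chh$ (via Proposition~\ref{p4,1} and property \eqref{eqa,3}), (ii) the holomorphic extensions live in compatible domains so that $\tilde{g}_\pm$ sits only on $\Omega_\pm$ while $\tilde{g}_{1,\pm}$ extends to the full cone, and (iii) after differentiating and taking imaginary parts the three contributions add up precisely to the right-hand side of \eqref{eq5,28}.
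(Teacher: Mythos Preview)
Your overall strategy coincides with the paper's: factor $I+\mathcal{T}_V\big(z(k)\big)=(I+A(k))\big(I+\tfrac{iJ}{k}\mathscr{B}\big)$, apply Proposition~\ref{p5,0} to ${\det}_2(I+A)$ with the sharpened count $N(r)=\mathcal{O}(|\ln r|)$ from Theorem~\ref{t4,1}, extract $J\phi'$ from the singular factor, and identify $\tilde g_{1,\pm}$ with the cross term $\textup{Tr}\big(\mathscr{A}(\sqrt{z})\,\mathfrak{B}(\sqrt{z})\big)$ bounded via Corollary~\ref{c4,1}. The paper uses the algebraically equivalent identity~\eqref{eq4,26} instead of the ${\det}_2$ multiplicativity formula, but this is cosmetic.

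There is, however, a concrete bookkeeping error. You place the term $-iJ\,\textup{Tr}(\mathscr{B})/\sqrt{z}$ into $\tilde g_{1,\pm}$ and then verify~\eqref{eq5,31} only for the piece $\textup{Tr}(\mathscr{A}\mathfrak{B})$. But $|\textup{Tr}(\mathscr{B})/\sqrt{z}|\sim |z|^{-1/2}$, whereas $\sigma_2(\sqrt{|z|})^{1/2}=\mathcal{O}\big(|z|^{-1/(2m_\perp)}\big)$ with $m_\perp>2$, so the extra term \emph{violates}~\eqref{eq5,31}. The root cause is that you never invoke the term $-\tfrac{1}{\pi}\,\im\,\textup{Tr}\big(\partial_z\mathcal{T}_V(\mu)\big)$ sitting in~\eqref{eq5,8}: since $\mathcal{T}_V(z)=\tfrac{iJ}{\sqrt{z}}\mathscr{B}+\mathscr{A}(\sqrt{z})$, one has $\im\,\textup{Tr}\big(\partial_z\mathcal{T}_V(\lambda+i0)\big)=-\tfrac{J\,\textup{Tr}\mathscr{B}}{2\lambda^{3/2}}+\im\,\partial_z\textup{Tr}\,\mathscr{A}$, and the first summand exactly cancels the derivative of your problematic $-iJ\,\textup{Tr}(\mathscr{B})/\sqrt{z}$ contribution. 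In the paper this cancellation is implicit in the passage from~\eqref{eq5,35} to the final formula: the exponential in~\eqref{eq5,32} carries $\textup{Tr}(\mathcal{T}_V-A)$ rather than $\textup{Tr}\big(A\cdot\tfrac{iJ}{k}\mathscr{B}\big)$, so the $\partial_z\mathcal{T}_V$ term is produced directly, and what remains after cancellations is precisely $\tilde g_{1,\pm}(z)=\textup{Tr}\big(\mathscr{A}(\sqrt{z})\mathfrak{B}(\sqrt{z})\big)$ alone (see~\eqref{eq5,36}--\eqref{eq5,38}). You also need the side observation that $\im\,\tfrac{1}{2k}\textup{Tr}\,\partial_k\mathscr{A}(k)=0$ on the real axis (the paper handles this via~\eqref{eq5,272}), which your outline does not mention. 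Once you route the $-iJ\,\textup{Tr}(\mathscr{B})/\sqrt{z}$ term through the $\partial_z\mathcal{T}_V$ slot of~\eqref{eq5,8} rather than through $\tilde g_{1,\pm}$, your argument and the paper's agree.
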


\begin{proof}
We use notations of Subsection 
\ref{ss4,1}. Hence for $z = z \big( 
\sqrt{r} k \big)$, $0 < s_1 < \vert
k \vert < s_0$ and $k \in 
\mathcal{C}_\delta(J)$ \eqref{eq4,26}
implies that
\begin{equation}\label{eq5,32}
\textup{det}_2 \big( I + 
\mathcal{T}_{V}(z) \big)  = 
\textup{det} \left( I + 
\frac{iJ}{\sqrt{r}k} \mathscr{B} 
\right) \times \textup{det}_2 \big( 
I + A(\sqrt{r}k) \big) 
e^{-\textup{Tr} \left( 
\mathcal{T}_{V}(z) - A(\sqrt{r}k) 
\right)},
\end{equation}
where $A(\sqrt{r}k)$ is given
by \eqref{eq4,25} with $k$ replaced
by $\sqrt{r}k$. Then as in the 
previous proof by applying Proposition
\ref{p5,0} to $\textup{det}_2 \big( 
I + A(\sqrt{r}\sqrt{\cdot}) \big)$
in $\Omega_\pm$ taking into account
\eqref{eq4,27} and \eqref{eq4,32}
we get
\begin{equation}\label{eq5,33}
\textup{det}_2 \big( I + 
A(\sqrt{r}\sqrt{z}) \big) = 
\prod_{w \in \textup{Res}(\chh) 
\cap r \Omega_\pm} \left( 
\frac{zr - \omega}{r} \right)
e^{\Tilde{g}_\pm(z,r)},
\end{equation}
where $\Tilde{g}_\pm$ is holomorphic
in $\Omega_\pm$ such that
\begin{equation}\label{eq5,34}
\frac{d}{dz} \Tilde{g}_\pm(z,r) 
= \mathcal{O} \left( \vert 
\ln r \vert \right),
\end{equation}
uniformly with respect to $z \in 
\mathscr{W}_\pm$. Then
according to definition 
\eqref{eq1,24} of $\xi_2$ and by
combining \eqref{eq5,32}-\eqref{eq5,33} 
we get for $\mu = z \big( \sqrt{r}k \big) 
= rk^2 \in r(\Omega_\pm \cap \br)$
\begin{equation}\label{eq5,35}
\begin{split}
\xi_2'(\mu) & = \frac{1}{\pi r} \im 
\hspace{0.5mm} \partial_\lambda 
\Tilde{g}_\pm \left( 
\frac{\mu}{r},r \right) + 
\sum_{\substack{w \in \textup{Res}
(\chh) \cap r \Omega_\pm \\ \im (w) 
\neq 0}} \frac{\im (w)}{\pi \vert 
\mu - w \vert^2} - \sum_{w \in 
\textup{Res}(\chh) \cap r I_\pm} 
\delta (\mu - w) \\
& + \frac{1}{2k\pi} \im \textup{Tr} 
\hspace{0.4mm} \left( \left( I + 
\frac{iJ}{k} \mathscr{B} \right)^{-1} 
\partial_k \left( \frac{iJ}{k} 
\mathscr{B} \right) \right) - 
\frac{1}{\pi} \im \textup{Tr} 
\hspace{0.4mm} \left( \partial_z 
\mathcal{T}_V(\mu + i0) - 
\frac{1}{2k} \partial_k A(k) \right),
\end{split}
\end{equation} 
where $k$ is defined by \eqref{eq5,27}.

By Lemma \ref{l3,1} $\partial_z 
\mathcal{T}_V(z)$ is of 
trace class. Then as in \eqref{eq5,271}
accordingly to definition \eqref{eq4,25}
of $A(k)$
\begin{equation}\label{eq5,36}
\partial_k A(k) = \partial_k 
\mathscr{A}(k) - \partial_k \left( 
\mathscr{A}(k) \frac{iJ}{k} \mathscr{B} 
\left( I + \frac{iJ}{k} \mathscr{B} 
\right)^{-1} \right) 
\end{equation}
is of trace class. For the first 
term of the RHS of \eqref{eq5,36} 
equality \eqref{eq5,272} holds. For the
second term we have
\begin{equation}\label{eq5,37}
\im \frac{1}{2k} \textup{Tr} 
\hspace{0.4mm} \partial_k \left( 
\mathscr{A}(k) \frac{iJ}{k} \mathscr{B} 
\left( I + \frac{iJ}{k} \mathscr{B} 
\right)^{-1} \right) = \im \frac{1}{2k} 
\partial_k \big( \Tilde{g}_{1,\pm}(k^2) 
\big),
\end{equation}
where $\Tilde{g}_{1,\pm}$ is the
holomorphic function given by
\begin{equation}\label{eq5,38}
\Tilde{g}_{1,\pm}(z) := \textup{Tr} 
\hspace{0.4mm} \left( \mathscr{A}
(\sqrt{z}) \frac{iJ}{\sqrt{z}} 
\mathscr{B} \left( I + 
\frac{iJ}{\sqrt{z}} \mathscr{B} 
\right)^{-1} \right)
\end{equation}
satisfying bound \eqref{eq5,31} by 
Corollary \ref{c4,1}.

For the fourth term of the RHS of
\eqref{eq5,35} we have
\begin{equation}\label{eq5,39}
\begin{split}
\frac{1}{2k} \im \textup{Tr} 
\hspace{0.4mm} & \left( \left( I + 
\frac{iJ}{k} \mathscr{B} \right)^{-1} 
\partial_k \left( \frac{iJ}{k} 
\mathscr{B} \right) \right) \\ 
& = -\frac{1}{2k^2} \im \textup{Tr} 
\hspace{0.4mm} \left( \frac{iJ}{k} 
\mathscr{B} \left( I + \frac{iJ}{k} 
\mathscr{B} \right)^{-1} \right) \\
& \displaystyle =
\left\{ \begin{array}{ccc} 0 
& \hbox{ if } & Jk \in i\br^+, \\  
-\frac{1}{2k^2} \textup{Tr} 
\hspace{0.4mm} \left( \frac{J}{k} 
\mathscr{B} \left( I + 
\frac{\mathscr{B}^2}{k^2} 
\right)^{-1} \right) = J \Phi'(k^2) 
& \hbox{ if } & k \in \br. 
\end{array} \right.
\end{split}
\end{equation} 
Then Proposition \ref{p5,2} follows.
\end{proof}

\subsection{Back to the proof of Theorem \ref{t2,1}}

It follows immediately by 
combining Lemma \ref{l5,1} with 
Propositions \ref{p5,1}-\ref{p5,2}.

\section{Proof of Theorem $\ref{t2,2}$: Singularity at the low ground state}\label{s6}

We begin by applying Theorem 
\ref{t2,1} on intervals of the form 
$r_n[1,2]$, $r_n = 2^n\lambda$ with 
$\lambda > 0$ small enough. Hence for 
$\Omega_+$ a complex neighbourhood 
of $[1,2]$ and $\mu \in r_n[1,2]$ we 
have
\begin{equation}\label{eq5,40}
\begin{split}
\xi'(\mu) = \frac{1}{r_n \pi} \im 
\hspace{0.5mm} \Tilde{g}'_\pm \left( 
\frac{\mu}{r_n},r_n \right) & + 
\sum_{\substack{w \in \textup{Res}(\chh) 
\cap r_n \Omega_+ \\ \im (w) \neq 0}}
\frac{\im (w)}{\pi \vert \mu - w 
\vert^2} \\ 
& - \sum_{w \in \textup{Res}
(\chh) \cap r_n [1,2]} \delta (\mu - w) 
+ \frac{1}{\pi} \left( J \phi' + \im 
\hspace{0.5mm} \Tilde{g}'_{1,\pm} 
\right)(\mu).
\end{split}
\end{equation} 
By Theorem \ref{t4,1} there exists at
most $\mathcal{O} \big( \vert \ln r_n 
\vert \big)$ resonances in 
$r_n\Omega_+$. Then by integrating
\eqref{eq5,40} on $r_n[1,2]$ we 
obtain
\begin{equation}\label{eq5,41}
\xi(r_{n+1}) - \xi(r_n) = 
\frac{1}{\pi} \big[ \im 
\hspace{0.5mm} \Tilde{g}_\pm 
(\cdot,r_n) \big]_1^2 + \mathcal{O} 
\big( \vert \ln r_n \vert \big) 
+ \frac{1}{\pi} \big[ J \phi + \im 
\hspace{0.5mm} \Tilde{g}_{1,\pm} 
\big]_{r_n}^{r_{n+1}}.
\end{equation} 
Now choose $N \in \bn$ such that
$\frac{N_{\gamma,\zeta}^2}{4} \leq
\lambda 2^{N+1} \leq 
\frac{N_{\gamma,\zeta}^2}{2}$. Then
taking the sum in \eqref{eq5,41} 
and exploiting the fact that in 
$\frac{N_{\gamma,\zeta}^2}{2} 
\big[ \frac{1}{2},1 \big]$ the 
functions $\xi$, $\Phi$, 
$\Tilde{g}_{1,\pm}$ are uniformly
bounded together with $\Tilde{g}_\pm 
(\cdot,r_n) = \mathcal{O} \big( 
\vert \ln r_n \vert \big)$ we get
\begin{equation}\label{eq5,42}
\xi(\lambda) = 
\frac{J}{\pi} \Phi (\lambda)
+ \frac{1}{\pi} \im \hspace{0.5mm} 
\Tilde{g}_{1,\pm}(\lambda) +
\sum_{n=0}^N \mathcal{O} \big( \vert 
\ln 2^n \lambda \vert \big) + 
\mathcal{O}(1).
\end{equation} 
Since $N = \mathcal{O} \big( \vert
\ln \lambda \vert \big)$ and 
$\Tilde{g}_{1,\pm}$ satisfies 
\eqref{eq2,12} then \eqref{eq5,42}
implies that for $\lambda$ small
enough
\begin{equation}\label{eq5,43}
\left\vert \xi(\lambda) -
\frac{J}{\pi} \Phi (\lambda)
\right\vert \leq C \vert
\ln \lambda \vert^2 + C \sigma_2 
\left( \sqrt{\lambda} 
\right)^{\frac{1}{2}}
\end{equation} 
for some $C > 0$ constant.
For a Hilbert-Schmidt operator $L$
on $\mathscr{H}$ we have 
$\Vert L \Vert_\sd^2 = \textup{Tr} 
\hspace{0.4mm} (LL^\ast)$. This 
together with the elementary
inequality
$$
\frac{u^2}{1 + u^2} \leq \arctan u
, \quad u \geq 0
$$
imply that $\sigma_2 \left( 
\sqrt{\lambda} \right) \leq \Phi
(\lambda)$, which completes the
proof.

\section{Proof of Theorem $\ref{t2,3}$: Local trace formula}\label{s7}

For simplicity of notation we
ignore in the proof the dependence 
on the subscript $\pm$. Let 
$\Tilde{\psi} \in C_0^{\infty} \big( 
\Omega \big)$ be an almost analytic
extension of $\psi$ such that 
$\Tilde{\psi} = 1$ on $\mathcal{W}$
and 
\begin{equation}
\textup{supp} \hspace*{0.1cm} 
\Bar{\partial}_z \Tilde{\psi} \subset 
\Omega \setminus \mathcal{W}.
\end{equation}
By Applying \eqref{eq1,23} and 
Theorem \ref{t2,1} we get
\begin{equation}\label{eq5,44}
\begin{split}
\textup{Tr} \hspace{0.4mm} & \left[ 
(\psi f) \left( \frac{\chh}{r} \right) 
- (\psi f) \left( \frac{\chho}{r} 
\right) \right] = - \left\langle \xi'
(\lambda),(\psi f) \left( 
\frac{\lambda}{r} \right) \right\rangle 
\\
& = \sum_{w \in \textup{Res}(\chh) \cap 
r \textup{supp} \hspace{0.05cm} \psi} 
(\psi f) \left( \frac{w}{r} \right)
- \frac{1}{\pi} \int (\psi f) \left( 
\frac{\lambda}{r} \right) \im \hspace{0.5mm}
g' \left( \frac{\lambda}{r},r \right) 
\frac{d\lambda}{r} \\
& + \sum_{\substack{w \in \textup{Res}
(\chh) \cap r \textup{supp} 
\hspace{0.05cm} \psi \\ \im (w) 
\neq 0}} \frac{1}{2\pi i} \int (\psi f) 
\left( \frac{\lambda}{r} \right) \left(
\frac{1}{\lambda - \overline{w}} - 
\frac{1}{\lambda - w}\right) d\lambda.
\end{split}
\end{equation}
Using the Green formula and 
\eqref{eq2,8} on $\textup{supp} 
\hspace{0.05cm} \Tilde{\psi}$ we can 
estimate the integral involving $g'$. 
On the other hand for $w \in \bc_- := 
\big\lbrace z \in \bc: \im(z) < 0 
\big\rbrace$ by applying the Green 
formula we get
\begin{equation}
- \frac{1}{\pi} \int_{\bc_-} 
\Bar{\partial}_z \Tilde{\psi}(z) 
\frac{1}{z - w} L(dz) + \Tilde{\psi}(w)
= - \frac{1}{2\pi i} \int_\br 
\Tilde{\psi}(\lambda) \frac{1}{\lambda 
- w} d\lambda
\end{equation}
and 
\begin{equation}
- \frac{1}{\pi} \int_{\bc_-} 
\Bar{\partial}_z \Tilde{\psi}(z) 
\frac{1}{z - \overline{w}} L(dz)
= - \frac{1}{2\pi i} \int_\br 
\Tilde{\psi}(\lambda) \frac{1}{\lambda 
- \overline{w}} d\lambda.
\end{equation}
Since $f$ is holomorphic then with 
the help of the above formulas and 
using the fact that $\Tilde{\psi} 
= \psi$ on $\br$ the third term of 
the RHS of \eqref{eq5,44} is equal 
to
\begin{equation}
\begin{split}
\sum_{\substack{w \in \textup{Res}
(\chh), \im (w) \neq 0}} &
(\Tilde{\psi} f) \left( \frac{w}{r} 
\right) \\ 
& + \sum_{\substack{w \in \textup{Res}
(\chh) \cap r \textup{supp} 
\hspace{0.05cm} \Tilde{\psi} \\ \im 
(w) \neq 0}} \frac{1}{\pi r} 
\int_{\bc_-} (\Bar{\partial}_z 
\Tilde{\psi}) \left( \frac{z}{r} 
\right) f \left( \frac{z}{r} \right) 
\left( \frac{1}{z - \overline{w}} 
- \frac{1}{z - w} \right) L(dz).
\end{split}
\end{equation}
Now by using Theorem \ref{t4,2} in
$\Omega$ and the elementary 
inequality \cite[(5.3)]{pet}
\begin{equation}
\int_{\Omega_1} 
\frac{1}{\vert z - w \vert} L(dz) 
\leq 2 \sqrt{2\pi\text{vol}(\Omega)}
\end{equation}
we get the result.


\section{Appendix}\label{sa}


We recall in this subsection the notion of index (with respect 
to a positively oriented contour) of a holomorphic function and 
a finite meromorphic operator-valued function, see for instance
\cite[Definition 2.1]{bo}.

If a function $f$ is holomorphic in a neighbourhood of a contour 
$\gamma$ its index with respect to $\gamma$ 
is defined by 
\begin{equation}\label{eqa,1}
ind_{\gamma} \hspace{0.5mm} f 
:= \frac{1}{2i\pi} \int_{\gamma} \frac{f'(z)}{f(z)} dz.
\end{equation}
Let us point out that if $f$ is holomorphic in a domain $\Omega$ 
with $\partial \Omega = \gamma$ then thanks to the residues theorem 
$\textup{ind}_{\gamma} \hspace{0.5mm} f$ coincides 
with the number of zeros of $f$ in $\Omega$ taking into account 
their multiplicity. 

Let $D \subseteq \mathbb{C}$ be a connected domain, 
$Z \subset D$ be a pure point and closed 
subset and $A : \overline{D} \backslash Z \longrightarrow 
\textup{GL}(E)$
a be finite meromorphic operator-valued function which is Fredholm 
at each point of $Z$.  The index of $A$ with respect 
to the contour $\partial \Omega$ is defined by 
\begin{equation}\label{eqa,2}
\small{Ind_{\partial \Omega} \hspace{0.5mm} A := 
\frac{1}{2i\pi} \textup{Tr} \int_{\partial \Omega} A'(z)A(z)^{-1} 
dz = \frac{1}{2i\pi} \textup{Tr} \int_{\partial \Omega} A(z)^{-1} 
A'(z) dz}.
\end{equation} 
The following properties are well known: 
\begin{equation}\label{eqa,3}
Ind_{\partial \Omega} \hspace{0.5mm} A_{1} A_{2} = 
Ind_{\partial \Omega} \hspace{0.5mm} A_{1} + 
Ind_{\partial \Omega} \hspace{0.5mm} A_{2};
\end{equation} 
for $K(z)$ a trace class operator
\begin{equation}\label{eqa,4}
Ind_{\partial \Omega} \hspace{0.5mm} (I+K)= 
ind_{\partial \Omega} \hspace{0.5mm} \det \hspace{0.5mm} (I + K).
\end{equation} 
We refer for instance \cite[Chap. 4]{goh} for more details.




\end{document}